\documentclass[12pt,reqno]{amsart}%
\usepackage{graphicx}
\usepackage{amsmath}
\usepackage{amsfonts}
\usepackage{amssymb}
\usepackage{mathtools,amsthm}
\usepackage{enumerate}
\usepackage{color}
\usepackage{cite}
\usepackage{url}
\usepackage[margin=1in]{geometry}%
\usepackage{tikz-cd}
\usepackage{mathabx}
\usepackage{boxedminipage}
\usepackage{algorithmic}
\usepackage{hyperref}
\usepackage[normalem]{ulem}
\usepackage{float}
\setcounter{MaxMatrixCols}{30}
\usepackage{graphicx}
\usepackage{acronym}
\providecommand{\U}[1]{\protect\rule{.1in}{.1in}}

\newtheorem{theorem}{Theorem}[section]
\newtheorem{proposition}[theorem]{Proposition}
\newtheorem{lemma}[theorem]{Lemma}
\newtheorem{corollary}[theorem]{Corollary}
\newtheorem{definition}[theorem]{Definition}

\newtheorem{algorithm}[theorem]{Algorithm}

\theoremstyle{remark}

\let\O\undefined

\DeclareMathOperator{\O}{O}

\DeclareMathOperator{\tr}{tr}

\DeclareMathOperator{\V}{V}

\newcommand{\tp}{{\scriptscriptstyle\mathsf{T}}}


\begin{document}
\title[Low rank orthogonal tensor approximation]{Linear Convergence of an Alternating Polar Decomposition Method for Low Rank Orthogonal Tensor Approximations}

\author{Shenglong Hu}
\address{Department of Mathematics, School of Science, Hangzhou Dianzi University, Hangzhou 310018, China.}
\email{shenglonghu@hdu.edu.cn}

\author{Ke Ye}
\address{KLMM, Academy of Mathematics and Systems Science, Chinese Academy of Sciences, Beijing 100190, China.}
\email{keyk@amss.ac.cn}

\begin{abstract}
Low rank orthogonal tensor approximation (LROTA) is an important problem in tensor computations and their applications. A classical and widely used algorithm is the alternating polar decomposition method (APD).
In this paper, an improved version iAPD of the classical APD is proposed. For the first time, all of the following four fundamental properties are established for iAPD: (i) the algorithm converges globally and the whole sequence converges to a KKT point without any assumption; (ii) it exhibits an overall sublinear convergence with an explicit rate which is sharper than the usual $O(1/k)$ for first order methods in optimization; (iii) more importantly, it converges $R$-linearly for a generic tensor without any assumption; (iv) for almost all LROTA problems, iAPD reduces to APD after finitely many iterations if it converges to a local minimizer.
\end{abstract}


\subjclass[2010]{15A18; 15A69; 65F18}
\keywords{Orthogonally decomposable tensors, {low rank orthogonal tensor approximation}, $R$-linear convergence, sublinear convergence, global convergence}
\maketitle



\section{Introduction}\label{sec:intro}
As higher order {generalizations} of matrices, tensors (a.k.a. hypermatrices or multi-way arrays) are ubiquitous and inevitable in {mathematical modeling} and scientific computing \cite{L-12,H-12,QL-17,CJ-10,M-87}. Among numerous tensor problems studied in recent years, tensor approximation and its related topics have been becoming the {main} focus, see \cite{KB-09,L-12,C-14} and references therein. Applications of tensor approximation are diverse and broad, including signal processing \cite{CJ-10}, computational complexity \cite{L-12}, pattern recognition \cite{AGHKT-14}, principal component analysis \cite{C-94}, etc. We refer interested readers to surveys \cite{KB-09,GKT-13,C-14,L-13} and books \cite{L-12,H-12,QL-17} for more details.

Singular value decomposition (SVD) of matrices is both a theoretical foundation and a computational workhorse for linear algebra, with applications spreading throughout scientific {computing} and engineering \cite{GV-13}. SVD of a given matrix is {a} \textit{rank-one orthogonal decomposition} of {the} matrix \cite{HJ-85,GV-13}, and a truncated SVD according to the {non-increasing} {singular values} is {a} \textit{low rank orthogonal approximation} of that matrix by the {well-known} Echart-Young theorem \cite{EY-36}. While a higher order tensor cannot be diagonalized by orthogonal matrices in general \cite{L-12}, there are several {generalizations} of SVD from matrices {to} tensors, such as higher order SVD \cite{DDV-00}, \textit{orthogonally decomposable tensor} decompositions and approximations and their variants, see \cite{K-01,CS-09,WTY-15,MHG-15,SDCJD-12,F-92} and references therein. In this paper, we focus on the low rank orthogonal tensor approximation (abbreviated as LROTA) problem. It is a low rank tensor approximation problem with all the factor matrices being orthonormal \cite{C-14,KB-09,DL-08}. This problem is of crucial importance in applications, such as blind source separation in signal processing and statistics \cite{CJ-10,C-94,C-14,M-87}.

In the literature, several numerical methods have been proposed to solve this problem, such as Jacobi-type methods \cite{C-94}, for which the tensor considered usually has a symmetric structure. {Interested} readers are referred to \cite{LUC-18,LUC-19,ULC-19,LUC-20,MV-08,IAV-13}. A more general problem is studied, {where} some factor matrices {are} orthonormal and the rest of them {are} unconstrained. We denote these low rank orthogonal tensor approximation problems {by} LROTA-$s$ with $s$ the number of orthonormal factor matrices. For simplicity, LROTA {denotes the problem where all the factor matrices are orthonormal}. For LROTA-$s$, a {commonly adopted algorithmic framework is the \textit{alternating minimization method (AMM)} \cite{B-99}, under which the alternating polar decomposition method (APD) is proposed and widely employed \cite{CS-09,WTY-15}.}
Under a regularity condition that all matrices in certain iterative sequence are of full rank, it is {proved} that every converging subsequence generated by this method for LROTA converges to a stationary point of the objective function by Chen and Saad \cite{CS-09} in 2009. In 2012, Uschmajew established a local convergence result under {some} appropriate assumptions \cite{U-12}. In 2015, Wang, Chu and Yu proposed {an} AMM with a modified polar decomposition for LROTA-$1$, and established the global convergence without any further assumption for a generic tensor \cite{WTY-15}. In 2019, Guan and Chu \cite{GC-19} established the global convergence for LROTA-$s$ with general $s$ under a similar regularity condition as \cite{CS-09}. Very recently, Yang proposed an epsilon-alternating least square method for solve the problem LROTA-$s$ with general $s$ and established its global convergence without any assumption \cite{Y-19} \footnote{Yang's paper is posted during our final preparation of this paper. We can see that we both employ the proximal technique. A difference is that the proximal correction in our algorithm is adaptive, and a theoretical investigation is also given (cf.\ Section~\ref{sec:discussion}) for the execution of the proximal correction.}. {On the other hand, the special case of rank-one tensor approximation} was {systematically} studied since {the work of} De Lathauwer, De Moor, and Vandewalle \cite{DDV-00}. {A higher order power method, which is essentially an application of AMM, was proposed and global} convergence results were established, see \cite{M-13,WC-14,U-15}. {Moreover, the convergence rate} was also estimated in \cite{ZG-01,U-12,HL-18}.


Motivated by the {development} of the convergence analysis of the rank-one case and the general low rank case, a fundamental question is: \textit{is there an algorithm for LROTA such that all the favorable convergence properties in the rank-one case {also} hold for the general low rank case}? The answer is affirmative. Given this is true, one hopes that this algorithm should be as close as possible to the widely used classical APD, so that several questions raised in the literature can be addressed \cite{WTY-15,GC-19,CS-09}. {In this paper, we provide an affirmative answer to the question.
} Listed below are main contributions of this paper:
\begin{enumerate}
\item we propose an improved version iAPD for the alternating polar decomposition (APD) method given in \cite{GC-19} for solving LROTA, and show its global convergence without any assumption;
\item we establish an overall sublinear convergence of iAPD and present an \textit{explicit eventual convergence rate} in terms of the dimension and the order of the underlying tensor. The derived convergence rate is sub-optimal in the sense that it is sharper than the usual convergence rate $O(1/k)$ established for first order methods in the literature \cite{Beck-book};
\item we prove that iAPD is linear convergent for a generic tensor without any other assumption; 
\item we also show that for almost all LROTA problems, iAPD reduces to APD after finitely many iterations if it converges to a local minimizer. In particular, this relaxes the requirement for {each iterative matrix being of full rank} in the literature, such as \cite{GC-19,CS-09}, to a simple requirement on the limit point.
\end{enumerate}
{Other than these, we also prove} that every KKT point of LROTA is nondegenerate for a generic tensor, which {might} be of independent interests.

The {rest} of this paper {is} organized as follows: {preliminaries on multilinear algebra, differential geometry and optimization theory that will be encountered repeatedly in the sequel are included in Section~\ref{sec:preliminary}.} In particular, the LROTA problem is {stated} in Section~\ref{sec:lowrank}; Section~\ref{sec:manifold} is devoted to the analysis of the manifold structures of the set of orthogonally decomposable tensors. In this section, the connection between KKT points of LROTA and critical points of the projection function on manifolds is established. It is shown that every KKT point of LROTA for a generic tensor is nondegenerate; the {new} algorithm iAPD is proposed in Section~\ref{sec:APPDT} and detailed convergence analysis for this algorithm is given. The overall sub-optimal sublinear convergence and generic linear convergence are also proved; Section~\ref{sec:discussion} {proves} that for almost all LROTA problems, iAPD reduces to APD after finitely many iterations if it converges to a local minimizer; some final remarks are given in Section~\ref{sec:final}; {to avoid distracting readers too much by technical details, lemmas are stated when they are needed and proofs
are provided in Appendix~\ref{sec:polar} and \ref{app:proofalgo}.}

\section{Preliminaries}\label{sec:preliminary}

\subsection{Tensors}\label{sec:notation}
In this {subsection}, we provide {a review of basic notions of tensors.} Given positive integers $k\geq 2$ and $n_1,\dots,n_k$, the tensor space consisting of real tensors of dimension $n_1\times\dots\times n_k$ is denoted {by} $\mathbb R^{n_1}\otimes\dots\otimes\mathbb R^{n_k}$. In this {vector space}, an inner product {and hence a norm} can be defined. The Hilbert-Schmidt inner product of two given tensors $\mathcal A, \mathcal B\in\mathbb R^{n_1}\otimes\dots\otimes\mathbb R^{n_k}$ is defined {by}
\[
\langle\mathcal A,\mathcal B\rangle:=\sum_{i_1=1}^{n_1}\dots\sum_{i_k=1}^{n_k}a_{i_1\dots i_k}b_{i_1\dots i_k}.
\]
The Hilbert-Schmidt norm $\|\mathcal A\|$ is then {given by} (cf.\ \cite{L-13})
\[
\|\mathcal A\|:=\sqrt{\langle\mathcal A,\mathcal A\rangle}.
\]
{
In particular, if $k=2$, then an element in $\mathbb{R}^{n_1} \otimes \mathbb{R}^{n_2}$ is simply an $n_1 \times n_2$ matrix $A$, whose Hilbert-Schmidt norm reduces to the Frobenius norm $\lVert A \rVert_F$.
}

Given a positive integer $r$ and $\lambda_1,\dots,\lambda_r\in\mathbb R$, {we denote by $\operatorname{diag}(\lambda_1,\dots,\lambda_r)$ the diagonal tensor in $\mathbb R^r\otimes\dots\otimes\mathbb R^r$ with the order being understood from the context}. To be more precise, we have
\[
\left( \operatorname{diag} (\lambda_1,\dots,\lambda_r)\right)_{i_1\dots i_k} =
\begin{cases}
\lambda_j,\quad &\text{if}~i_1 =\cdots = i_k = j\in\{1,\dots,r\}, \\
0 ,\quad &\text{otherwise}.
\end{cases}
\]
For a given positive integer $k$, we may regard the tensor $\operatorname{diag} (\lambda_1,\dots,\lambda_r)$ as the image of $(\lambda_1,\dots,\lambda_r)$ under the map $\operatorname{diag} : \mathbb R^r\to\otimes^k\mathbb R^r$ defined in an obvious way. We also define the map $\operatorname{Diag} : \otimes^k\mathbb R^r\to\mathbb R^r$ by taking the diagonal of a $k$th order tensor. By definition, $\operatorname{Diag}\circ\operatorname{diag} : \mathbb R^r\to\mathbb R^r$ is the identity map.


{We define a map $\tau : \mathbb R^{n_1}\times\dots\times\mathbb R^{n_k}\rightarrow\mathbb R^{n_1}\otimes\dots\otimes\mathbb R^{n_k}$ by
\begin{equation}\label{eq:segre}
\tau(\mathbf x):=\mathbf x_1\otimes\dots\otimes\mathbf x_k.
\end{equation}
where $\mathbf{x}$ is a \textit{block vector}
\[
\mathbf x:=(\mathbf x_1,\dots,\mathbf x_k)\in\mathbb R^{n_1}\times\dots\times\mathbb R^{n_k}\simeq\mathbb R^{n_1+\dots+n_k}\ \text{with }\mathbf x_i\in\mathbb R^{n_i}\ \text{for all }i=1,\dots,k.
\]
For each $i\in\{1,\dots,k\}$, we define a map $\tau_i : \mathbb R^{n_1}\times\dots\times\mathbb R^{n_k}\rightarrow\mathbb R^{n_1}\otimes\dots\otimes\mathbb R^{n_{i-1}}\otimes\mathbb R^{n_{i+1}}\otimes\dots\otimes\mathbb R^{n_k}$ by
\[
\tau_i(\mathbf x):=\mathbf x_1\otimes\dots\otimes\mathbf x_{i-1}\otimes\mathbf x_{i+1}\otimes\dots\otimes\mathbf x_k,\quad \mathbf{x} \in \mathbb R^{n_1}\times\dots\times\mathbb R^{n_k}.
\]
}

Given a tensor {$\mathcal A\in\mathbb R^{n_1}\otimes\dots\otimes\mathbb R^{n_k}$} and a block vector $\mathbf x$ as above, $\mathcal A\tau(\mathbf x)$ is defined by
\[
\mathcal A\tau(\mathbf x):=\langle\mathcal A,\tau(\mathbf x)\rangle
\]
and $\mathcal A\tau_i(\mathbf x)\in\mathbb R^{n_i}$ {is a vector defined implicitly by the relation}:
\[
\langle \mathcal A\tau_i(\mathbf x),\mathbf x_i\rangle=\mathcal A\tau(\mathbf x)
\]
for any block vector $\mathbf x$. Moreover, given $k$ matrices $B^{(i)}\in\mathbb R^{m_i\times n_i}$ for $i\in\{1,\dots,k\}$, the {\textit{matrix-tensor product} $(B^{(1)},\dots,B^{(k)})\cdot\mathcal A$ is a tensor} in $\mathbb R^{m_1}\otimes\dots\otimes\mathbb R^{m_k}$, defined entry-wisely as
\begin{equation}\label{eq:matrix-tensor}
\big[(B^{(1)},\dots,B^{(k)})\cdot\mathcal A\big]_{i_1\dots i_k}:=\sum_{j_1=1}^{n_1}\dots\sum_{j_k=1}^{n_k}b^{(1)}_{i_1j_1}\dots b^{(k)}_{i_kj_k}a_{j_1\dots j_k}
\end{equation}
for all $i_t\in\{1,\dots,m_t\}$ and $t\in\{1,\dots,k\}$.

\subsection{Stiefel {manifolds}}\label{sec:stiefel}
Let $m\le n$ be two positive integers and let $V(m,n)\subseteq \mathbb R^{n \times m}$ be the set of all $n \times m$ orthonormal matrices, i.e.,
\[
V(m,n):=\{U\in \mathbb R^{n \times m}\colon U^\tp U=I\},
\]
where $I$ is the $m\times m$ identity matrix. {Indeed, $V(m,n)$ admits a smooth manifold structure and is called \emph{the Stiefel manifold of orthonormal $m$-frames in $\mathbb{R}^n$}.} In particular, if $m = n$ then $V(n,n)$ simply reduces to the orthogonal group $\O (n)$.

For any $A\in V(m,n)$, the \emph{Fr\'echet normal cone} of $V(m,n)$ at $A$ is defined as (cf.\ \cite{RW-98})
\[
{\hat N_{V(m,n)}(A):=\{B\in\mathbb R^{n \times m}\mid \langle B,C-A\rangle\leq o(\|C-A\|_F)\ \text{for all }C\in V(m,n)\}.}
\]
Usually, we set $\hat N_{V(m,n)}(A) = \emptyset$ whenever $A\not\in V(m,n)$.
The \emph{(limiting) normal cone} $N_{V(n,m)}(A)$ of $V(n,m)$ at $A\in V(n,m)$ is defined by (cf.\ \cite{RW-98})
{\[
N_{V(m,n)}(A) \coloneqq \left\lbrace B \in \mathbb{R}^{n\times m}:
\begin{multlined}
A_k\in V(m,n),  \lim_{k\to \infty} A_k = A, \\
B_k\in \hat N_{V(m,n)}(A_k), \lim_{k\to \infty} B_k = B
\end{multlined}
\right\rbrace.
\]
}

It is easily seen from the definition that the normal cone $N_{V(m,n)}(A)$ is always closed.
The indicator function $\delta_{V(m,n)}$ of $V(m,n)$ is defined by
\[
\delta_{V(m,n)}(X):=\begin{cases}0&\text{if }X\in V(m,n),\\ +\infty &\text{otherwise}.\end{cases}
\]
Given a function $f : \mathbb R^n\rightarrow \mathbb R\cup\{\infty\}$, the \emph{regular subdifferential} of $f$ at $\mathbf x\in\mathbb R^n$ is defined as
\[
{\hat\partial f(\mathbf x):=\Bigg\{\mathbf v\in\mathbb R^n\colon\liminf_{\mathbf x\neq\mathbf y\rightarrow \mathbf x}\frac{f(\mathbf y)-f(\mathbf x)-\langle\mathbf v,\mathbf y-\mathbf x\rangle}{\|\mathbf y-\mathbf x\|}\geq 0\Bigg\}
}
\]
and the \emph{(limiting) subdifferential} of $f$ at $\mathbf x$ is defined as
\[
{\partial f(\mathbf x):=\Big\{\mathbf v\in\mathbb R^n\colon \exists \{\mathbf x^k\}\rightarrow \mathbf x\ \text{and }\{\mathbf v^k\}\rightarrow \mathbf v\ \text{satisfying }\mathbf v^k\in \hat\partial f(\mathbf x^k)\ \text{for all }k\Big\}.
}
\]
If $\mathbf 0\in\partial f(\mathbf x)$, then $\mathbf x$ is a \textit{critical point} of $f$.
An important fact about the normal cone $N_{V(m,n)}(A)$ and the subdifferential of the indicator function $\delta_{V(m,n)}$ of $V(m,n)$ at $A$ is (cf.\ \cite{RW-98})
\begin{equation}\label{eq:normal-sub}
\partial \delta_{V(m,n)}=N_{V(m,n)}.
\end{equation}
Note that $V(m,n)$ is a smooth manifold of dimension $mn-\frac{m(m+1)}{2}$.
It follows from \cite[Chapter~6.C]{RW-98} and \cite{EAT-98,AMS-08} that
\[
N_{V(m,n)}(A)=\hat N_{V(m,n)}(A)=\{AS\mid S\in\operatorname{S}^{m \times m}\},
\]
where $\operatorname{S}^{m\times m}\subseteq \mathbb R^{m\times m}$ is the subspace of $m \times m$ symmetric matrices.

Given a matrix $B\in\mathbb R^{n \times m}$, the projection of $B$ onto the normal cone of $V(m,n)$ at $A$ is
\[
\pi_{N_{V(m,n)}(A)}(B)=A(\frac{A^\tp B+B^\tp A}{2}).
\]

The tangent space $T_{V(m,n)}(A)$ of $V(m,n)$ at a point $A\in V(m,n)$ is the orthogonal complement to the normal cone.
Given a matrix $B\in\mathbb R^{n \times m}$, the projection of $B$ onto the tangent space of $V(m,n)$ at a point $A\in V(m,n)$ is given by
\begin{equation}\label{eq:tangent-proj}
\pi_{T_{V(m,n)}(A)}(B)=A\operatorname{skew}(A^\tp B)+(I-AA^\tp )B,
\end{equation}
where $\operatorname{skew}(C) \coloneqq \frac{C-C^\tp }{2}
$ is for a square matrix $C\in\mathbb R^{m \times m}$. A more explicit formula is given as
\begin{equation}\label{eq:tangent-form}
\pi_{T_{V(m,n)}(A)}(B)= (I-\frac{1}{2}AA^\tp )(B-AB^\tp A).
\end{equation}
\subsection{Orthogonally decomposable tensor}\label{codt}
A tensor $\mathcal A\in\mathbb R^{n_1}\otimes\dots\otimes\mathbb R^{n_k}$ is called \textit{orthogonally decomposable} (cf.\ \cite{ZG-01,K-01,F-92,K-02}) if there exist orthonormal matrices
\[
U^{(i)}= \begin{bmatrix}
\mathbf u^{(i)}_1 & \cdots & \mathbf u^{(i)}_r
\end{bmatrix} \in V(r,n_i),\ i=1,\dots,k
\]
and numbers $\lambda_j\in\mathbb R$ for $1\le j \le r\leq \min\{n_1,\dots,n_k\}$ such that
\begin{equation}\label{eq:orthogonal}
\mathcal A=\sum_{j =1}^r\lambda_j\mathbf u^{(1)}_j\otimes\dots\otimes\mathbf u^{(k)}_j.
\end{equation}
{Here for each $i=1,\dots, k$ and $j=1,\dots, r$, the vector $\mathbf u^{(i)}_j \in \mathbb{R}^{n_i}$ is the $j$-th column vector of the orthonormal matrix $U^{(i)}$.} Without loss of generality, we may assume that $\lambda_j \geq 0$ for all $j =1,\dots,r$.
Throughout this paper, we will always assume that $k\geq 3$.

{Let $r, k$ be positive integers and let $\mathbf n \coloneqq (n_1,\dots,n_k)$ be a $k$-dimensional integer vector}. We denote by $C(\mathbf n, r)\subseteq \mathbb R^{n_1}\otimes\dots\otimes\mathbb R^{n_k}$ the set of all \textit{orthogonally decomposable tensors with rank at most $r$}, i.e.,
\begin{multline}\label{eq:codt-r}
C(\mathbf n,r)\coloneqq \Big\{{\mathcal A}\in\mathbb R^{n_1}\otimes\dots\otimes\mathbb R^{n_k}\colon \mathcal A=(U^{(1)},\dots,U^{(k)})\cdot \operatorname{diag}(\lambda_1,\dots, \lambda_r),\\ U^{(i)}\in V(r,n_i)\ \text{for all }i\in \{1,\dots,k\},\ \lambda_j\in\mathbb R\ \text{for all }  j \in \{1,\dots,r\} \Big\}.
\end{multline}
We also let $D(\mathbf n, r)\subseteq \mathbb R^{n_1}\otimes\dots\otimes\mathbb R^{n_k}$ be the set of all \textit{orthogonally decomposable tensors with rank $r$}, i.e.,
\begin{multline}\label{eq:codt-ex-r}
D(\mathbf n,r):=\Big\{\mathcal A\in\mathbb R^{n_1}\otimes\dots\otimes\mathbb R^{n_k}\colon \mathcal A= (U^{(1)},\dots,U^{(k)})\cdot \operatorname{diag}(\lambda_1,\dots, \lambda_r),\\ U^{(i)}\in V(r,n_i)\ \text{for all }i \in \{1,\dots,k\},\ \lambda_j\neq 0\ \text{for all }  j \in \{1,\dots,r\}\Big\}.
\end{multline}

\begin{lemma}[Unique Decomposition]\label{lem:unique}
For each $\mathcal A\in D(\mathbf n,r)$, {the rank-$r$ decomposition of $\mathcal{A}$ is unique}. In particular, the orthogonal decomposition of $\mathcal A$ is unique.
\end{lemma}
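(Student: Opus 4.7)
My plan is to reduce uniqueness of an arbitrary rank-$r$ decomposition to the uniqueness problem for a diagonal core tensor in $\otimes^k\mathbb R^r$, and then to exploit off-diagonal contractions to show that the core decomposition must itself be diagonal up to permutation and rescaling. The orthogonal case will then fall out by a normalization check.

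\textbf{Step 1 (Reduction to a diagonal core).} Write the given orthogonal decomposition as $\mathcal{A}=\sum_{j=1}^r\lambda_j\mathbf u_j^{(1)}\otimes\cdots\otimes\mathbf u_j^{(k)}$ with $\lambda_j\neq 0$, and suppose $\mathcal{A}=\sum_{j=1}^r\mu_j\mathbf v_j^{(1)}\otimes\cdots\otimes\mathbf v_j^{(k)}$ is any other rank-$r$ decomposition (so all $\mu_j\neq 0$ and $\mathbf v_j^{(l)}\neq 0$). The mode-$i$ unfolding gives
\[
A_{(i)}=U^{(i)}\operatorname{diag}(\lambda_1,\dots,\lambda_r)\bigl(\mathbf w_1^{(i)}\mid\cdots\mid\mathbf w_r^{(i)}\bigr)^\tp,
\]
where $\mathbf w_j^{(i)}\coloneqq\mathbf u_j^{(1)}\otimes\cdots\otimes\mathbf u_j^{(i-1)}\otimes\mathbf u_j^{(i+1)}\otimes\cdots\otimes\mathbf u_j^{(k)}$ is an orthonormal set in $\mathbb R^{\prod_{l\neq i}n_l}$ because every $U^{(l)}\in V(r,n_l)$. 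Hence $\rank(A_{(i)})=r$ and its column space equals $\operatorname{span}\{\mathbf u_1^{(i)},\dots,\mathbf u_r^{(i)}\}$. Unfolding the second decomposition now forces $\{\mathbf v_1^{(i)},\dots,\mathbf v_r^{(i)}\}$ to be a basis of that same $r$-dimensional subspace, so there exists an invertible $C^{(i)}=[\mathbf c_1^{(i)}\mid\cdots\mid\mathbf c_r^{(i)}]\in\GL_r(\mathbb R)$ with $\mathbf v_j^{(i)}=U^{(i)}\mathbf c_j^{(i)}$. Because the multilinear action $(U^{(1)},\dots,U^{(k)})\cdot$ is injective on $\otimes^k\mathbb R^r$, equating both decompositions of $\mathcal{A}$ is equivalent to
\[
\operatorname{diag}(\lambda_1,\dots,\lambda_r)=\sum_{j=1}^r\mu_j\,\mathbf c_j^{(1)}\otimes\cdots\otimes\mathbf c_j^{(k)}.
\]

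\textbf{Step 2 (Off-diagonal contractions).} For any $a\neq b$ in $\{1,\dots,r\}$, contracting the above identity with $\mathbf e_a$ in mode $1$ and $\mathbf e_b$ in mode $2$ yields
\[
0=\sum_{j=1}^r\mu_j\,(\mathbf c_j^{(1)})_a(\mathbf c_j^{(2)})_b\,\mathbf c_j^{(3)}\otimes\cdots\otimes\mathbf c_j^{(k)}.
\]
A short induction, ``flatten one mode and use invertibility of $C^{(l)}$'', shows that the $(k-2)$-fold products $\mathbf c_j^{(3)}\otimes\cdots\otimes\mathbf c_j^{(k)}$, $j=1,\dots,r$, are linearly independent in $\otimes^{k-2}\mathbb R^r$; this is exactly where $k\ge 3$ intervenes. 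Consequently $\mu_j(\mathbf c_j^{(1)})_a(\mathbf c_j^{(2)})_b=0$ for every $j$ and every $a\neq b$. Since $\mu_j\neq 0$, an elementary support argument (if $|\operatorname{supp}\mathbf c_j^{(1)}|\ge 2$ then every index in $\operatorname{supp}\mathbf c_j^{(2)}$ would have to equal two distinct indices) forces both $\mathbf c_j^{(1)}$ and $\mathbf c_j^{(2)}$ to be scalar multiples of one common standard basis vector $\mathbf e_{i_j}$.

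\textbf{Step 3 (Assembly and orthogonal specialization).} Repeating Step~2 with the other pairs of modes shows $\mathbf c_j^{(l)}=\alpha_j^{(l)}\mathbf e_{i_j}$ for every $l$, with the \emph{same} index $i_j$. Invertibility of each $C^{(l)}$ makes $j\mapsto i_j$ a permutation of $\{1,\dots,r\}$, and comparing diagonal entries yields $\mu_j\prod_l\alpha_j^{(l)}=\lambda_{i_j}$. Unwinding Step~1, this reads $\mu_j\mathbf v_j^{(1)}\otimes\cdots\otimes\mathbf v_j^{(k)}=\lambda_{i_j}\mathbf u_{i_j}^{(1)}\otimes\cdots\otimes\mathbf u_{i_j}^{(k)}$ as rank-$1$ tensors, so the rank-$r$ decomposition is unique up to permutation and the usual rank-$1$ rescaling. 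If in addition the second decomposition is orthogonal, then $\|\mathbf v_j^{(l)}\|=\|\mathbf u_{i_j}^{(l)}\|=1$ restricts each $\alpha_j^{(l)}$ to $\pm 1$, recovering the in-particular statement.

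The main obstacle I anticipate is the paired claim in Step~2: linear independence of the $(k-2)$-fold products and the support-collapse combinatorics. Neither is deep, but they are the precise places where the hypotheses $k\ge 3$ and $\lambda_j\neq 0$ are genuinely used, and both must be executed cleanly to avoid invoking Kruskal's theorem or other heavy machinery.
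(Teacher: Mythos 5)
Your argument is correct and complete, but it takes a different route from the paper: the paper disposes of this lemma by citing Kruskal's inequality (with a pointer to Zhang--Golub for a direct proof), whereas you give a self-contained elementary derivation. Your Step~1 correctly identifies the column space of each mode-$i$ unfolding with $\operatorname{col}(U^{(i)})$ (using that the Khatri--Rao columns $\mathbf w_j^{(i)}$ are orthonormal, so $\rank A_{(i)}=r$), forcing the second decomposition's factors into that space via invertible $C^{(i)}$, and the injectivity of the multilinear action of full-column-rank matrices legitimately reduces everything to the diagonal core. In Step~2 the linear independence of $\mathbf c_j^{(3)}\otimes\cdots\otimes\mathbf c_j^{(k)}$ is exactly as easy as you suggest: one mode-$3$ flattening and invertibility of $C^{(3)}$ reduce it to the nonvanishing of each $\mathbf c_j^{(4)}\otimes\cdots\otimes\mathbf c_j^{(k)}$, and the support-collapse combinatorics with $\mu_j\neq 0$ is airtight; this is indeed where $k\ge 3$ enters, and the argument visibly fails for matrices. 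Step~3's bookkeeping (the index $i_j$ is common to all modes because $\mathbf c_j^{(1)}$ is already pinned to $\mathbf e_{i_j}$, and invertibility of $C^{(1)}$ makes $j\mapsto i_j$ a permutation) and the $\pm1$ normalization for the orthogonal case are both correct. What your approach buys is independence from Kruskal's theorem and an explicit exhibition of the permutation-and-sign ambiguity; what the paper's citation buys is brevity and the stronger general statement that orthogonality of the factors already saturates Kruskal's bound. Either is acceptable; yours is closer in spirit to the direct proof the paper attributes to Zhang and Golub.
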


\begin{proof}
It follows from Kruskal's inequality \cite{KB-09,L-12,K-77} immediately. {A direct proof can also be found in \cite{ZG-01}.}
\end{proof}

\subsection{Morse functions}\label{sec:morse}
In the following, we introduce the
notion of Morse functions and {recall some of its basic properties}. {On a smooth manifold $M$, a {smooth} function $f : M\rightarrow \mathbb R$ is called a \textit{Morse function} if each critical point of $f$ on $M$ is nondegenerate,} {i.e., the Hessian matrix of $f$ at each critical point is non-singular.} The following result  is well-known, see for example \cite[Theorem~6.6]{M-63}.
\begin{lemma}[Projection is Generically Morse]\label{lem:generic-morse}
{Let $M$ be a {submanifold} of $\mathbb R^n$. For {a generic} $\mathbf a = (a_1,\dots,a_n)^\tp \in\mathbb R^n$, the Euclidean distance function}
\[
f(\mathbf x)=  \|\mathbf x-\mathbf a\|^2
\]
is a Morse function on $M$.
\end{lemma}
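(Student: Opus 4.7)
The plan is to invoke the classical argument via the normal bundle and Sard's theorem, following Milnor. First, I would introduce the normal bundle
\[
NM := \{(\mathbf x,\mathbf v)\in M\times\mathbb R^n : \mathbf v\in N_{\mathbf x}M\},
\]
which is a smooth manifold of dimension $n$ (since $\dim M + \operatorname{codim} M = n$), together with the endpoint map $E : NM\to\mathbb R^n$ defined by $E(\mathbf x,\mathbf v) := \mathbf x+\mathbf v$. Both $NM$ and $E$ are smooth.

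Next, I would identify the critical points of $f_{\mathbf a}(\mathbf x) := \|\mathbf x-\mathbf a\|^2$ on $M$. A point $\mathbf x\in M$ is critical for $f_{\mathbf a}$ if and only if the ambient gradient $2(\mathbf x-\mathbf a)$ is orthogonal to $T_{\mathbf x}M$, equivalently $\mathbf a-\mathbf x\in N_{\mathbf x}M$. Thus the critical set of $f_{\mathbf a}$ on $M$ is in bijection with the fiber $E^{-1}(\mathbf a)$ via $\mathbf x\mapsto(\mathbf x,\mathbf a-\mathbf x)$.

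The main technical step is to show that the Hessian of $f_{\mathbf a}$ on $M$ at a critical point $\mathbf x$ is nondegenerate if and only if the differential $dE_{(\mathbf x,\mathbf a-\mathbf x)}$ is an isomorphism. I would do this in adapted local coordinates: choose an orthonormal frame so that $T_{\mathbf x}M$ is the first $d$ coordinate axes and $N_{\mathbf x}M$ the remaining $n-d$, then parameterize $M$ near $\mathbf x$ as the graph of a smooth map $\varphi$ with $\varphi(0)=0$ and $d\varphi(0)=0$, and parameterize $NM$ near $(\mathbf x,\mathbf a-\mathbf x)$ by the graph coordinates on $M$ together with coefficients in a smoothly varying normal frame. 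A direct block computation shows that the Jacobian matrix of $E$ in these coordinates has the form
\[
J_E = \begin{pmatrix} I_d - \tfrac{1}{2}\operatorname{Hess}_{M}f_{\mathbf a}(\mathbf x) & * \\ 0 & I_{n-d} \end{pmatrix}
\]
up to smooth changes of basis, so $\det dE_{(\mathbf x,\mathbf a-\mathbf x)}$ vanishes exactly when $\operatorname{Hess}_{M}f_{\mathbf a}(\mathbf x)$ is singular.

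Finally, I would invoke Sard's theorem: since $E:NM\to\mathbb R^n$ is a smooth map between manifolds of equal dimension $n$, its set $\Sigma\subseteq\mathbb R^n$ of critical values has Lebesgue measure zero. For every $\mathbf a\in\mathbb R^n\setminus\Sigma$, every preimage of $\mathbf a$ under $E$ is a regular point, and by the previous step every critical point of $f_{\mathbf a}$ on $M$ has nondegenerate Hessian; hence $f_{\mathbf a}$ is Morse. Declaring the generic set to be $\mathbb R^n\setminus\Sigma$ (the complement of a null set) completes the argument. The main obstacle is the Hessian--Jacobian identification in the third step, which requires a careful local calculation involving the second fundamental form of $M$; the remaining ingredients are standard differential topology.
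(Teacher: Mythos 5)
The paper gives no proof of this lemma, citing Milnor's \emph{Morse Theory}, Theorem~6.6, and your proposal is precisely that standard argument (normal bundle, endpoint map, identification of degenerate critical points with focal points, Sard's theorem), so it matches the intended proof. One small correction: in your adapted coordinates the tangential block of $dE_{(\mathbf x,\mathbf a-\mathbf x)}$ is $\tfrac{1}{2}\operatorname{Hess}_M f_{\mathbf a}(\mathbf x)$ itself, which equals $I_d - A_{\mathbf a-\mathbf x}$ with $A$ the shape operator, rather than $I_d-\tfrac{1}{2}\operatorname{Hess}_M f_{\mathbf a}(\mathbf x)$; the conclusion you draw (that $\det dE$ vanishes exactly when the Hessian is singular) is nonetheless the correct one.
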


We will also need the following property (cf. \cite[Corollary~2.3]{M-63}) of nondegenerate critical points in the sequel.
\begin{lemma}\label{lem:isolated critical points}
Let $M$ be a manifold and let $f:M \to \mathbb{R}$ be a smooth function. Nondegenerate critical points of $f$ are isolated.
\end{lemma}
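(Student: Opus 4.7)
The plan is to prove the statement locally via the inverse function theorem, which is the cleanest route and avoids invoking the full Morse lemma. Fix a nondegenerate critical point $p\in M$ and choose a smooth coordinate chart $(\varphi, U)$ around $p$ with $\varphi(p)=0$ and $\varphi(U)\subseteq \mathbb{R}^n$ open, where $n=\dim M$. Pulling back, set $\tilde f := f\circ \varphi^{-1} : \varphi(U)\to \mathbb{R}$. Then $p$ is a critical point of $f$ if and only if $0$ is a critical point of $\tilde f$, and nondegeneracy is preserved because the Hessian transforms by the (invertible) Jacobian of the chart. So it suffices to prove the statement in $\mathbb{R}^n$ for a smooth function with a nondegenerate critical point at the origin.

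Next I would define the gradient map $G : \varphi(U)\to \mathbb{R}^n$ by $G(x):=\nabla \tilde f(x)$. This is a smooth map, and by hypothesis $G(0)=0$. The Jacobian $DG(0)$ is precisely the Hessian $\nabla^2 \tilde f(0)$, which is invertible because $0$ is a nondegenerate critical point. Therefore, by the inverse function theorem, $G$ is a diffeomorphism from some open neighborhood $W$ of $0$ onto an open neighborhood of $0$ in $\mathbb{R}^n$. In particular, $G$ is injective on $W$, so the only solution of $G(x)=0$ in $W$ is $x=0$.

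Translating back through the chart, this means that the only critical point of $f$ in $\varphi^{-1}(W)\subseteq M$ is $p$ itself, so $p$ is isolated among critical points of $f$. Since $p$ was arbitrary, every nondegenerate critical point of $f$ is isolated.

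The proof contains no real obstacle; the only thing to verify carefully is that the notion of nondegenerate Hessian is chart-independent (which follows because, at a critical point, the Hessian transforms as a symmetric bilinear form by the congruence $J^\tp H J$ with $J$ the Jacobian of the coordinate change, preserving invertibility). Everything else is a direct application of the inverse function theorem to the gradient map in local coordinates.
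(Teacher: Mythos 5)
Your proof is correct. The paper itself gives no argument for this lemma; it simply cites Milnor's \emph{Morse Theory}, where the statement appears as Corollary~2.3 and is deduced from the Morse Lemma: in Morse coordinates one has $f = f(p) \pm y_1^2 \pm \cdots \pm y_n^2$, whose only critical point in the chart is the origin. Your route is different and more elementary: you apply the inverse function theorem directly to the gradient map $G = \nabla \tilde f$ in a chart, using that $DG(0)$ equals the Hessian and is invertible by nondegeneracy, so $G$ is injective near $0$ and hence has $0$ as its only zero there. This avoids the full strength of the Morse Lemma (whose proof is itself nontrivial) at the cost of being purely local-analytic rather than exhibiting a normal form. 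Your remark that nondegeneracy is chart-independent because the Hessian at a critical point transforms by the congruence $J^\tp H J$ is exactly the point that needs checking, and you handle it correctly. Both arguments are standard and complete; yours is self-contained where the paper's is a citation.
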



{To conclude this subsection, we briefly discuss how critical points behave under a local diffeomorphism. {For this purpose,} we recall that two smooth manifolds $M_1$ and $M_2$ are called \emph{locally diffeomorphic} if there is a smooth map $\varphi : M_1\rightarrow M_2$ such that for each point $\mathbf x\in M_1$ there exists a neighborhood $U\subseteq M_1$ of $\mathbf x$ and a neighborhood $V\subseteq M_2$ of $\varphi(\mathbf x)$ such that the restriction $\varphi|_U : U\rightarrow V$ is a diffeomorphism \cite{dC-92}. In this case, the corresponding $\varphi$ is called a \emph{local diffeomorphism} between $M_1$ and $M_2$. {It is clear from the definition that two locally diffeomorphic manifolds must have the same dimension. Moreover, we have the following result, whose proof can be found in \cite[Proposition~5.2]{HL-18}.}
\begin{proposition}\label{prop:critical}
Let {$M_1$ and $M_2$} be two {locally diffeomorphic} smooth manifolds  and let
$\varphi : M_1\rightarrow M_2$ be the corresponding local diffeomorphism.
Let $f : M_2\rightarrow \mathbb R$ be a smooth function. Then $\mathbf x\in M_1$ is a (nondegenerate) critical point of $f\circ \varphi$ on $M_1$ if and only if $\varphi(\mathbf x)$ is a (nondegenerate) critical point of $f$ on $M_2$.
\end{proposition}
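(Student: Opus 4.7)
The plan is to reduce everything to linear algebra on tangent spaces via the chain rule, using the fact that a local diffeomorphism has invertible differential at every point. Since the statement is local (both criticality and nondegeneracy are pointwise conditions), I may restrict $\varphi$ to neighborhoods $U \ni \mathbf{x}$ and $V \ni \varphi(\mathbf{x})$ on which $\varphi|_U : U \to V$ is an honest diffeomorphism. In particular the tangent map $d\varphi_{\mathbf{x}} : T_{\mathbf{x}} M_1 \to T_{\varphi(\mathbf{x})} M_2$ is a linear isomorphism.

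First I would treat the first-order (critical point) equivalence. By the chain rule, for every $v \in T_{\mathbf{x}} M_1$,
\[
d(f\circ \varphi)_{\mathbf{x}}(v) \;=\; df_{\varphi(\mathbf{x})}\!\bigl(d\varphi_{\mathbf{x}}(v)\bigr).
\]
Because $d\varphi_{\mathbf{x}}$ is a bijection between $T_{\mathbf{x}} M_1$ and $T_{\varphi(\mathbf{x})} M_2$, the linear functional $d(f\circ\varphi)_{\mathbf{x}}$ vanishes identically if and only if $df_{\varphi(\mathbf{x})}$ does. In intrinsic terms, $\mathbf{0} \in \partial(f\circ\varphi)(\mathbf{x})$ iff $\mathbf{0} \in \partial f(\varphi(\mathbf{x}))$, so $\mathbf{x}$ is a critical point of $f \circ \varphi$ exactly when $\varphi(\mathbf{x})$ is a critical point of $f$.

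Next I would handle nondegeneracy. At a critical point the Hessian of a smooth function on a manifold is a well-defined symmetric bilinear form on the tangent space (independent of the choice of chart or connection, since the first derivative vanishes). Computing either in local coordinates or via a two-parameter variation $\gamma(s,t)$ in $M_1$ with $\gamma(0,0) = \mathbf{x}$, $\partial_s \gamma(0,0) = v$, $\partial_t\gamma(0,0) = w$, and differentiating $f\circ\varphi\circ\gamma$ twice at the origin, one obtains
\[
\operatorname{Hess}(f\circ\varphi)_{\mathbf{x}}(v,w) \;=\; \operatorname{Hess}(f)_{\varphi(\mathbf{x})}\!\bigl(d\varphi_{\mathbf{x}}(v),\, d\varphi_{\mathbf{x}}(w)\bigr)
\]
for all $v,w \in T_{\mathbf{x}} M_1$; all the extra terms involving second derivatives of $\varphi$ multiply $df_{\varphi(\mathbf{x})}$, which is zero at a critical point. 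Since $d\varphi_{\mathbf{x}}$ is a linear isomorphism, the bilinear form on the left is nondegenerate iff the bilinear form on the right is nondegenerate, equivalently iff the Hessian matrix of $f$ at $\varphi(\mathbf{x})$ is non-singular.

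The only mildly delicate point, and the place that deserves care rather than real difficulty, is justifying the intrinsic definition of the Hessian at a critical point and checking that the ``correction'' terms coming from the second derivatives of $\varphi$ drop out because $df_{\varphi(\mathbf{x})} = 0$. Once that bookkeeping is in place, both directions of the equivalence follow from the fact that $d\varphi_{\mathbf{x}}$ is an invertible linear map, and one obtains the stated equivalence for both critical points and nondegenerate critical points simultaneously.
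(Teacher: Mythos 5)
Your proof is correct. The paper itself does not spell out an argument for this proposition --- it simply defers to \cite[Proposition~5.2]{HL-18} --- and your chain-rule argument, together with the observation that the Hessian at a critical point transforms by pullback along the isomorphism $d\varphi_{\mathbf x}$ (the second-derivative terms of $\varphi$ being killed by $df_{\varphi(\mathbf x)}=0$), is precisely the standard proof that the cited reference gives.
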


{When $f$ is a smooth function on $\mathbb{R}^n$ and $M$ is a submanifold of $\mathbb{R}^n$, we denote by $\nabla f$ the gradient of $f$ as a function on $\mathbb{R}^n$, while we denote by $\operatorname{grad}(f)$ the Riemannian gradient of $f$ as a function on $M$. In other words, $\operatorname{grad}(f)$ is the projection of $\nabla f$ to the tangent space of $M$.}

\subsection{Kurdyka-\L ojasiewicz property}\label{sec:kl}
In this subsection, we will review some basic facts {about the} Kurdyka-\L ojasiewicz property, which {even holds for nonsmooth functions} in general. {Interested readers are referred} to \cite{ABS-13,LP-16,ABRS-10,BDLM-10}.

Let $p$ be an extended real-valued function and let $\partial p(\mathbf x)$ be the set of sub-differentials of $p$ at $\mathbf x$ (cf.\ \cite{RW-98}). We define $\operatorname{dom}(\partial p) \coloneqq \{\mathbf x\colon\partial p(\mathbf x )\neq \emptyset\}$ and take $\mathbf x^*\in \operatorname{dom}(\partial p)$. If there exist some $\eta\in(0,+\infty]$, a neighborhood $U$ of $\mathbf x^*$, and a continuous concave function $\varphi:[0,\eta)\rightarrow \mathbb{ R}_+$, such that
\begin{enumerate}
\item $\varphi  (0)=0$,
\item $\varphi$ is  $C^1 $ on $(0,\eta)$,
\item for all $s\in (0,\eta)$, $\varphi^{\prime}(s)>0$, and
\item for all $\mathbf x\in U\cap {\{ \mathbf y\colon p(\mathbf x^*)<p(\mathbf y)<p(\mathbf x^*)+\eta \}}$, the Kurdyka-\L ojasiewicz inequality holds
\[
\varphi^{\prime}(p(\mathbf x) - p(\mathbf x^*)) \operatorname{dist}(\mathbf 0,\partial p(\mathbf x))\geq 1,
\]
\end{enumerate}
then {we say that} $p$ has the \emph{Kurdyka-\L ojasiewicz (abbreviated as KL) property} at $\mathbf x^*$.  {Here} $\operatorname{dist}(\mathbf 0,\partial p(\mathbf{x} ))$ denotes the distance from $\mathbf 0$ to the set $\partial p(\mathbf{x} )$. {If $p$ is proper, lower semicontinuous, and has the KL property at each point of $\operatorname{dom}(\partial p)$, then $p$ is said to be a \emph{KL function}.} Examples of KL functions include real subanalytic functions and semi-algebraic functions \cite{BST-14}. In this paper, semi-algebraic functions will be involved, we refer to \cite{BCR-98} and references herein for more details on such functions. {In particular, polynomial functions are semi-algebraic functions and hence KL functions.
Another important fact is that the indicator function of a semi-algebraic set is also a semi-algebraic function \cite{BCR-98,BST-14}. Also, a finite sum of semi-algebraic functions is again semi-algebraic. We assemble these facts to derive the following lemma which will be crucial to the analysis of the global convergence of our algorithm.
\begin{lemma}\label{lem:KL functions}
A finite sum of polynomial functions and indicator functions of semi-algebraic sets is a KL function.
\end{lemma}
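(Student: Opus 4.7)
The plan is to assemble the three facts that the paragraph preceding the lemma already collects, and then invoke the standard result that a proper lower semicontinuous semi-algebraic function satisfies the Kurdyka-\L ojasiewicz inequality at every point of its domain. More precisely, let $p=\sum_{i=1}^{N}p_i+\sum_{j=1}^{M}\delta_{S_j}$, where each $p_i$ is a polynomial on $\mathbb R^n$ and each $S_j\subseteq\mathbb R^n$ is a semi-algebraic set (which, in our applications such as the Stiefel manifold, is in fact closed). The goal is to produce, at each $\mathbf x^*\in\operatorname{dom}(\partial p)$, a desingularizing function $\varphi$ satisfying the four conditions in the definition of the KL property.

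The first step is to show that $p$ is itself a semi-algebraic function. For this I would recall three facts from \cite{BCR-98,BST-14}: (a) every polynomial on $\mathbb R^n$ is semi-algebraic (its graph is a semi-algebraic subset of $\mathbb R^{n+1}$); (b) the indicator function $\delta_S$ of a semi-algebraic set $S$ is semi-algebraic, since its graph is $(S\times\{0\})\cup(S^c\times\{+\infty\})$ and complements and products preserve semi-algebraicity; and (c) the class of semi-algebraic functions is closed under finite sums (a direct quantifier-elimination argument, or by expressing the graph of a sum as a projection of an intersection of semi-algebraic sets). Putting (a)--(c) together yields that $p$ is semi-algebraic.

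The second step is to verify that $p$ is proper and lower semicontinuous, so that the Bolte--Daniilidis--Lewis theorem \cite[Theorem~3.1]{BDLM-10} applies. Properness is immediate because $p>-\infty$ and, provided the indicator sum is not identically $+\infty$, $\operatorname{dom}(p)=\bigcap_{j=1}^{M}S_j$ is nonempty. For lower semicontinuity, each polynomial $p_i$ is continuous, and $\delta_{S_j}$ is lower semicontinuous precisely when $S_j$ is closed; since the semi-algebraic sets appearing in the paper (Stiefel manifolds and their intersections with sublevel sets) are closed, this hypothesis is met, and a finite sum of lower semicontinuous functions is lower semicontinuous.

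The final step is to invoke the theorem of Bolte--Daniilidis--Lewis, which states that any proper lower semicontinuous semi-algebraic function $p$ has the KL property at every point of $\operatorname{dom}(\partial p)$, with a desingularizing function of the form $\varphi(s)=cs^{1-\theta}$ for some $\theta\in[0,1)$ and $c>0$. By definition this makes $p$ a KL function and completes the proof. The only genuine subtlety is ensuring that the indicator sets are closed so that $p$ is lower semicontinuous; once that is observed, the lemma is essentially a bookkeeping consequence of the closure properties of semi-algebraic functions together with the cited theorem, and no further analysis is required.
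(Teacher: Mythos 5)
Your proof is correct and follows essentially the same route as the paper, which simply assembles the facts that polynomials and indicator functions of semi-algebraic sets are semi-algebraic, that a finite sum of semi-algebraic functions is semi-algebraic, and that proper lower semicontinuous semi-algebraic functions are KL (via the Bolte--Daniilidis--Lewis result). Your extra care about properness and lower semicontinuity --- which requires the indicator sets to be closed, as the Stiefel manifolds in the intended application indeed are --- is a point the paper glosses over but is genuinely needed under its stated definition of a KL function.
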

}

While KL-property is used for global convergence analysis, the \L ojasiewicz inequality discussed {in the rest of this subsection} is for convergence rate analysis. The classical \L ojasiewicz inequality for analytic functions is {stated as follows} (cf.\ \cite{L-63}):
\begin{itemize}
\item[] {\bf (Classical \L ojasiewicz's gradient inequality)} If $f$ is an analytic function with $f(\mathbf 0)=0$ and $\nabla f(\mathbf 0)=\mathbf 0$, then there exist positive constants $\mu, \kappa,$ and $\epsilon$
such that
\[
\|\nabla f(\mathbf x)\|\ge\mu|f(\mathbf x)|^{\kappa}\;\mbox{ for all }\;\|\mathbf x\|\le\epsilon.
\]
\end{itemize}

As pointed out in \cite{ABRS-10,BDLM-10}, it is often difficult to determine the corresponding exponent $\kappa$ in \L ojasiewicz's gradient inequality, and it is unknown for a general function. However, an estimate of the exponent $\kappa$ in the gradient inequality were derived by  D'Acunto and Kurdyka in \cite[Theorem~4.2]{DK-05} {when $f$ is a polynomial function}. We {record} this fundamental result in the next lemma, which will play a key role in our sublinear convergence rate analysis.

\begin{lemma}[\L ojasiewicz's Gradient Inequality for Polynomials]\label{lemma:loja1}
 Let $f$ be a real polynomial {of degree $d$}. 
Suppose that $f(\mathbf 0)=0$ and $\nabla f(\mathbf 0)=\mathbf 0$. There exist constants $c, \epsilon > 0$ such that for all $\|\mathbf x\|\le\epsilon$, we have
\[
\|\nabla f(\mathbf x)\|\ge c|f(\mathbf x)|^{\kappa}\;\mbox{ with }\;\kappa = 1-\frac{1}{d(3d-3)^{n-1}}.
\]
%
\end{lemma}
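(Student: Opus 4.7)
The plan is to follow the proof of D'Acunto and Kurdyka \cite{DK-05}. Note that the existence of \emph{some} exponent $\kappa\in(0,1)$ for which the inequality holds is already a consequence of the classical \L ojasiewicz gradient inequality for analytic functions; the novelty of Lemma~\ref{lemma:loja1} lies entirely in the explicit dependence of $\kappa$ on the degree $d$ and the dimension $n$.

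First I would reduce to the nontrivial situation by considering, for a candidate exponent $\kappa$ and a candidate constant $c>0$, the semi-algebraic set
\[
W_{\kappa,c} := \{\mathbf x\in\mathbb R^n : \|\nabla f(\mathbf x)\| < c|f(\mathbf x)|^\kappa\}.
\]
If $\mathbf 0$ is not a cluster point of $W_{\kappa,c}$, then the stated inequality holds on a sufficiently small neighborhood of $\mathbf 0$. Otherwise, the Curve Selection Lemma for semi-algebraic sets produces a real analytic arc $\gamma:[0,\eta)\to W_{\kappa,c}$ with $\gamma(0)=\mathbf 0$, and along this arc the Cauchy--Schwarz inequality yields
\[
|(f\circ\gamma)'(t)|\leq \|\nabla f(\gamma(t))\|\cdot\|\gamma'(t)\|<c\,|f\circ\gamma(t)|^{\kappa}\|\gamma'(t)\|.
\]
Integrating this differential inequality against the Puiseux expansions of both $f\circ\gamma$ and $\gamma$ produces a quantitative contradiction as soon as $\kappa$ is chosen above a threshold that can be expressed in terms of the degrees of the algebraic constraints defining the arc.

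The crux of the argument, and the main obstacle, is to extract the \emph{explicit} exponent $\kappa = 1-1/(d(3d-3)^{n-1})$. This is carried out by induction on $n$. The base case $n=1$ is handled directly by factoring the univariate polynomial $f$ and accounts for the outer factor of $d$. The inductive step slices $W_{\kappa,c}$ with a generic hyperplane and applies Bezout-type degree bounds (or the Milnor--Thom bounds on the number of connected components of real algebraic sets) to the polar varieties associated to $f$ and $\nabla f$. The compound factor $(3d-3)^{n-1}$ accumulates because each reduction step involves the gradient (whose components have degree $d-1$) together with a factor of $3$ arising from the simultaneous manipulation of $f$, $\|\nabla f\|^2$ and their partial derivatives in a quantitative Sard-type step. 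The delicate part is the degree bookkeeping: at each step one must verify that after restriction to a generic hyperplane the relevant polynomials still satisfy the inductive degree bounds, so that the multiplicative factor $3d-3$ compounds cleanly across the $n-1$ reductions and the leading factor $d$ is preserved. Once this bookkeeping is completed, the trajectory length estimate derived from the inductive hypothesis combined with the differential inequality above yields the \L ojasiewicz gradient inequality with the announced exponent.
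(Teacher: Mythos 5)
The paper does not prove this lemma at all: it is quoted as \cite[Theorem~4.2]{DK-05} (D'Acunto--Kurdyka), so there is no internal argument of the paper to compare yours against; the cleanest ``proof'' here is simply the citation. Judged on its own terms, your proposal correctly observes that the curve-selection/Puiseux argument only yields the existence of \emph{some} exponent $\kappa\in(0,1)$, and that the entire content of the lemma is the explicit value $\kappa=1-1/(d(3d-3)^{n-1})$. But that is exactly the part you do not carry out. The passage beginning ``This is carried out by induction on $n$\dots'' describes a strategy in qualitative terms (generic hyperplane slicing, Bezout or Milnor--Thom bounds, a factor of $3$ said to arise from ``simultaneous manipulation'' of $f$, $\|\nabla f\|^2$ and their derivatives) and then asserts that ``once this bookkeeping is completed'' the exponent follows. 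No inductive hypothesis is stated, no degrees are actually computed, and the specific factors $d$ and $3d-3$ are rationalized after the fact rather than derived. The gap sits precisely where the difficulty of the theorem lies.

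It is also worth flagging that the actual argument of D'Acunto and Kurdyka does not obtain the explicit exponent by integrating a differential inequality along a curve-selection arc. Their exponent comes from a ``talweg'' construction: in each level set $f^{-1}(t)$ they select points where $\|\nabla f\|$ is nearly minimal, show these points lie on an algebraic curve cut out by polynomials whose degrees are controlled by Bezout-type estimates (this is where $d(3d-3)^{n-1}$ appears as a degree bound), and then bound the length of that curve to compare $|f|$ against $\|\nabla f\|$. If you intend to reprove the lemma rather than cite it, you would need to reproduce that construction --- or at minimum supply the concrete degree computations your sketch defers; as written, the proposal is an annotated reading plan for \cite{DK-05} rather than a proof.
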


Below is a manifold version of the \L ojasiewicz gradient inequality \cite{dC-92}.
\begin{proposition}[\L ojasiewicz's Gradient Inequality]\label{prop:lojasiewicz}
Let $M$ be a smooth manifold and let $g: M \to \mathbb R$ be a smooth function for which $\mathbf z^*$ is a nondegenerate critical point. Then there exists a neighborhood {$ U$} in $M$ of $\mathbf z^*$ such that for all $\mathbf z\in  {U}$
\[
\|\operatorname{grad}(g)(\mathbf z)\|^2\geq \kappa |g(\mathbf z)-g(\mathbf z^*)|
\]
for some $\kappa>0$.
\end{proposition}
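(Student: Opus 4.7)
The plan is to exploit the nondegeneracy of $\mathbf{z}^*$ via the Morse Lemma (cf.\ \cite{M-63}), thereby reducing the claim to a transparent estimate for a nondegenerate quadratic form. Because $g$ is smooth and $\mathbf{z}^*$ is a nondegenerate critical point, the Morse Lemma provides a local chart $\varphi : U \to \mathbb{R}^n$ around $\mathbf{z}^*$, with $\varphi(\mathbf{z}^*) = \mathbf{0}$ and $n = \dim M$, in which the pullback $\tilde g := g \circ \varphi^{-1}$ takes the canonical form
\[
\tilde g(\mathbf{x}) \;=\; g(\mathbf{z}^*) \;-\; \sum_{i=1}^{p} x_i^2 \;+\; \sum_{i=p+1}^{n} x_i^2,
\]
where $p$ is the Morse index of $g$ at $\mathbf{z}^*$.

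In these Morse coordinates the Euclidean gradient satisfies $\|\nabla \tilde g(\mathbf{x})\|^2 = 4\|\mathbf{x}\|^2$, while $|\tilde g(\mathbf{x}) - g(\mathbf{z}^*)| \le \|\mathbf{x}\|^2$. Hence the Euclidean version
\[
\|\nabla \tilde g(\mathbf{x})\|^2 \;\ge\; 4\,|\tilde g(\mathbf{x}) - g(\mathbf{z}^*)|
\]
holds throughout the chart. To transfer this to the Riemannian gradient on $M$, I would express $\operatorname{grad}(g)(\mathbf{z})$ in the chart as $G(\mathbf{x})^{-1} \nabla \tilde g(\mathbf{x})$, where $G(\mathbf{x})$ is the matrix of the Riemannian metric in these coordinates. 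Its squared Riemannian norm is then $\nabla \tilde g(\mathbf{x})^{\tp} G(\mathbf{x})^{-1} \nabla \tilde g(\mathbf{x})$. Since $G(\mathbf{0})$ is positive definite and $G$ is smooth, there exist a constant $c > 0$ and a possibly smaller neighborhood of $\mathbf{0}$ on which the smallest eigenvalue of $G(\mathbf{x})^{-1}$ exceeds $c$. Combining the two estimates on the intersection of the two open neighborhoods yields
\[
\|\operatorname{grad}(g)(\mathbf{z})\|^2 \;\ge\; 4c\,|g(\mathbf{z}) - g(\mathbf{z}^*)|,
\]
which establishes the claim with $\kappa = 4c$.

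The entire argument is essentially a one-line consequence of the Morse Lemma once the Euclidean-versus-Riemannian gradient comparison is in place, so the main obstacle is purely organizational: the Morse chart and the uniform bound on $G^{-1}$ are produced on possibly different neighborhoods of $\mathbf{z}^*$, and one must shrink to their intersection to obtain a single open neighborhood $U$ on which the combined inequality holds. Note that the exponent on $|g(\mathbf{z}) - g(\mathbf{z}^*)|$ is exactly $1$ (equivalently, the \L ojasiewicz exponent is $1/2$), as expected for a nondegenerate critical point, so the general polynomial estimate in Lemma~\ref{lemma:loja1} is not needed here.
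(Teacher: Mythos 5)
Your proof is correct, and it is the standard argument for this fact: the paper itself offers no proof (it only cites the reference before the statement), and the Morse-lemma reduction to the canonical quadratic form, followed by the comparison $\|\operatorname{grad}(g)\|^2=\nabla\tilde g^{\tp}G^{-1}\nabla\tilde g\ge\lambda_{\min}(G^{-1})\|\nabla\tilde g\|^2$ on a shrunken neighborhood, is exactly the intended route. Your observation that the exponent is $1$ (\L ojasiewicz exponent $1/2$) at a nondegenerate critical point, so Lemma~\ref{lemma:loja1} is not needed, is also apt.
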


\subsection{Low rank orthogonal tensor approximation}\label{sec:lowrank}
The problem considered in this {paper can be described as follows}: given a tensor $\mathcal A\in\mathbb R^{n_1}\otimes\dots\otimes\mathbb R^{n_k}$, find an orthogonally decomposable tensor $\mathcal B\in\mathbb R^{n_1}\otimes\dots\otimes\mathbb R^{n_k}$ of rank at most $r\leq\min\{n_1,\dots,n_k\}$ such that the residual $\|\mathcal A-\mathcal B\|$ is minimized. {More precisely, we will consider the following optimization problem:}
\begin{flalign}\label{eq:sota}
&\text{(LROTA(r))}\ \ \ \ \ \ \begin{array}{rl}\min&\|\mathcal A-(U^{(1)},\dots,U^{(k)})\cdot\Upsilon\|^2\\
\text{s.t.}&\Upsilon=\operatorname{diag}(\upsilon_1,\dots,\upsilon_r),\ \upsilon_i\in\mathbb R,\\
& \big(U^{(i)}\big)^\tp U^{(i)}=I\  \text{for all } {1 \le i \le k}.
\end{array}&
\end{flalign}



\begin{proposition}[Maximization Equivalence]\label{prop:sota-max}
The {optimization} problem~\eqref{eq:sota} is equivalent to
\begin{flalign}\label{eq:sota-max}
&\rm{(mLROTA(r))}\ \ \ \ \ \
\begin{array}{rl}\max&\sum_{j=1}^r\Big(\big(\big(U^{(1)}\big)^\tp ,\dots,\big(U^{(k)}\big)^\tp \big)\cdot\mathcal A\Big)_{j\dots j}^2\\
\rm{s.t.} & \big(U^{(i)}\big)^\tp U^{(i)}=I\ \text{for all } {1 \le i \le k}
\end{array}&
\end{flalign}
in the following sense
\begin{enumerate}
\item if {$ (\mathbb U_*,\Upsilon_* ) \coloneqq \big((U^{(1)}_*,\dots,U^{(k)}_*),  \operatorname{diag}((\upsilon_*)_1,\dots,(\upsilon_*)_r)  \big)$} is an optimizer of \eqref{eq:sota} with the optimal value $\|\mathcal A\|^2-\sum_{i=1}^r(\upsilon_*)_i^2$, then $\mathbb U_*$ is an optimizer of \eqref{eq:sota-max} with the optimal value $\sum_{i=1}^r(\upsilon_*)_i^2$;
\item {conversely}, {if $\mathbb U_*$ is an optimizer of \eqref{eq:sota-max}, then $(\mathbb U_*,\Upsilon_* )$ is an optimizer of \eqref{eq:sota}} where
\[
\Upsilon_*=\operatorname{diag}\Big(\operatorname{Diag}\Big(\big(\big(U_*^{(1)}\big)^\tp ,\dots,\big(U_*^{(k)}\big)^\tp \big)\cdot\mathcal A\Big)\Big).
\]
\end{enumerate}
\end{proposition}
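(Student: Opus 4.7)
The plan is to treat $\Upsilon$ as an inner variable and minimize it out explicitly, turning the residual minimization into an unconstrained (in $\Upsilon$) quadratic program for each fixed $\mathbb{U}$. Expanding the squared residual gives
\[
\|\mathcal A-(U^{(1)},\dots,U^{(k)})\cdot\Upsilon\|^2
=\|\mathcal A\|^2-2\langle\mathcal A,(U^{(1)},\dots,U^{(k)})\cdot\Upsilon\rangle+\|(U^{(1)},\dots,U^{(k)})\cdot\Upsilon\|^2,
\]
so I need to simplify the last two terms using the orthonormality of the $U^{(i)}$.

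First I would show that the norm term is independent of $\mathbb{U}$. Writing $\Upsilon=\operatorname{diag}(\upsilon_1,\dots,\upsilon_r)$, the matrix-tensor product formula \eqref{eq:matrix-tensor} yields $(U^{(1)},\dots,U^{(k)})\cdot\Upsilon=\sum_{j=1}^r \upsilon_j\,\mathbf u^{(1)}_j\otimes\cdots\otimes\mathbf u^{(k)}_j$, and expanding the Hilbert-Schmidt norm squared of this sum produces cross terms $\upsilon_j\upsilon_\ell\prod_{i=1}^k\langle\mathbf u^{(i)}_j,\mathbf u^{(i)}_\ell\rangle$, which vanish for $j\neq\ell$ because each $U^{(i)}\in V(r,n_i)$ has orthonormal columns. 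Hence $\|(U^{(1)},\dots,U^{(k)})\cdot\Upsilon\|^2=\sum_{j=1}^r\upsilon_j^2$. For the inner product term, I would use the adjointness of the matrix-tensor action together with the identity $\langle\mathcal A,\mathbf u^{(1)}_j\otimes\cdots\otimes\mathbf u^{(k)}_j\rangle=\bigl[\bigl((U^{(1)})^\tp,\dots,(U^{(k)})^\tp\bigr)\cdot\mathcal A\bigr]_{j\dots j}$; setting $a_j(\mathbb{U}) \coloneqq \bigl[\bigl((U^{(1)})^\tp,\dots,(U^{(k)})^\tp\bigr)\cdot\mathcal A\bigr]_{j\dots j}$, the cross term becomes $-2\sum_{j=1}^r\upsilon_j a_j(\mathbb{U})$.

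Combining these reductions, the objective of \eqref{eq:sota} equals
\[
\|\mathcal A\|^2+\sum_{j=1}^r\bigl(\upsilon_j-a_j(\mathbb U)\bigr)^2-\sum_{j=1}^r a_j(\mathbb U)^2.
\]
For fixed $\mathbb{U}$ the minimizer in $\Upsilon$ is therefore $\upsilon_j^\ast=a_j(\mathbb U)$ with minimum value $\|\mathcal A\|^2-\sum_{j=1}^r a_j(\mathbb U)^2$; minimizing further over $\mathbb{U}$ is the same as maximizing $\sum_{j=1}^r a_j(\mathbb U)^2$, which is precisely \eqref{eq:sota-max}.

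From this the two claimed correspondences fall out. If $(\mathbb U_\ast,\Upsilon_\ast)$ solves \eqref{eq:sota} with value $\|\mathcal A\|^2-\sum_j(\upsilon_\ast)_j^2$, then $\Upsilon_\ast$ must be the partial minimizer for $\mathbb U_\ast$, so $(\upsilon_\ast)_j=a_j(\mathbb U_\ast)$; the value identity then forces $\mathbb U_\ast$ to maximize $\sum_j a_j(\mathbb U)^2$. Conversely, plugging in $\Upsilon_\ast=\operatorname{diag}\bigl(\operatorname{Diag}\bigl(((U_\ast^{(1)})^\tp,\dots,(U_\ast^{(k)})^\tp)\cdot\mathcal A\bigr)\bigr)$ for any optimal $\mathbb U_\ast$ of \eqref{eq:sota-max} reproduces exactly the partial minimizer above, giving an optimizer of \eqref{eq:sota}. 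There is no real obstacle here; the only point that requires a small argument is the orthonormal invariance of the norm under the matrix-tensor action with non-square orthonormal factors, which is why I would do it by direct expansion rather than citing a general change-of-basis statement.
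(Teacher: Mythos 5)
Your proposal is correct and follows essentially the same route as the paper: expand the squared residual, use orthonormality to reduce the norm term to $\sum_j\upsilon_j^2$, use adjointness of the matrix-tensor action for the cross term, and eliminate $\Upsilon$ by the explicit quadratic minimization $\upsilon_j^\ast=a_j(\mathbb U)$. The only difference is that you spell out the vanishing of the cross terms in $\|(U^{(1)},\dots,U^{(k)})\cdot\Upsilon\|^2$ and the completion of the square, both of which the paper leaves implicit.
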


\begin{proof}
{By a direct calculation we may obtain}
\begin{align*}
\|\mathcal A-(U^{(1)},\dots,U^{(k)})\cdot\Upsilon\|^2&=\|\mathcal A\|^2+\sum_{i=1}^r\upsilon_i^2-2\langle\mathcal A,(U^{(1)},\dots,U^{(k)})\cdot\Upsilon\rangle\\
&=\|\mathcal A\|^2+\sum_{i=1}^r\upsilon_i^2-2\Big\langle\big(\big(U^{(1)}\big)^\tp ,\dots,\big(U^{(k)}\big)^\tp \big)\cdot\mathcal A,\Upsilon\Big\rangle\\
&=\|\mathcal A\|^2+\sum_{i=1}^r\upsilon_i^2-2\sum_{i=1}^r\upsilon_i\Big[\big(\big(U^{(1)}\big)^\tp ,\dots,\big(U^{(k)}\big)^\tp \big)\cdot\mathcal A\Big]_{i\dots i}.
\end{align*}
Note that $\upsilon_i$ in the minimization problem \eqref{eq:sota} is unconstrained for all $i\in\{1,\dots,r\}$, and they are mutually independent.
Thus, at an optimizer $(\mathbb U_*,\Upsilon_*) \coloneqq \big((U^{(1)}_*,\dots,U^{(k)}_*),\Upsilon_*\big)$ of \eqref{eq:sota}, we must have
 \begin{equation}\label{eq:lambda}
 (\upsilon_*)_i=\Big[\big(\big(U_*^{(1)}\big)^\tp ,\dots,\big(U_*^{(k)}\big)^\tp \big)\cdot\mathcal A\Big]_{i\dots i}\ \text{for all } 1\le i \le r
 \end{equation}
 and the optimal value is
 \[
 \|\mathcal A\|^2-\sum_{i=1}^r(\upsilon_*)_i^2.
 \]
Therefore, problem~\eqref{eq:sota} is equivalent to \eqref{eq:sota-max}.
\end{proof}

\section{KKT Points via Projection onto Manifolds}\label{sec:manifold}
{On the one hand, a numerical algorithm solving the optimization problem \eqref{eq:sota} (or equivalently its maximization reformulation \eqref{eq:sota-max}) is usually designed} in the {parameter} space
\[
V_{\mathbf n,r}:=V(r,n_1)\times \dots\times V(r,n_k)\times\mathbb R^r.
\]
{See for example, \cite{CS-09,GC-19,Y-19,WTY-15}.}
On the other hand, {from a more geometric perspective, we can also regard problem \eqref{eq:sota} as} the projection of a given tensor $\mathcal A$ onto $C(\mathbf n,r)$. A key ingredient {in our study of problem \eqref{eq:sota}} is the {relation} between these two {viewpoints}. {Once such a connection is {understood}, we will be able to derive an algorithm in $V_{\mathbf{n},r}$ but analyse it in $C(\mathbf n,r)$.} {To be more precise, we will study both the problem of the projection
\begin{equation}\label{eq:projection}
\begin{array}{rl}
\min& \|\mathcal A-\mathcal B\|^2\\
\text{s.t.}&\mathcal B\in D(\mathbf n,r),
\end{array}
\end{equation}
and its parametrization
\begin{flalign}\label{eq:sota-proj}
&\text{(LROTA-P)}\ \ \ \ \ \ \begin{array}{rl}\min&g(\mathbb U,\mathbf x)=\frac{1}{2}\|\mathcal A-(U^{(1)},\dots,U^{(k)})\cdot\operatorname{diag}(\mathbf x)\|^2\\
\text{s.t.}
& \big(U^{(i)}\big)^\tp U^{(i)}=I\  \text{for all }1\le i \le k,\\
& \mathbf x\in\mathbb R^r_*,
\end{array}&
\end{flalign}
{where $\mathbb R_*:=\mathbb R\setminus\{0\}$.}
}

{We will first study properties of $C(\mathbf n,r)$ and $D(\mathbf{n},r)$ and then discuss critical points of problem~\eqref{eq:projection} in Section~\ref{subsec:geometry of codt}. KKT points of \eqref{eq:sota-max} and hence \eqref{eq:sota-proj} will be discussed in Section~\ref{sec:kkt}. The connection between them will be studied in Section~\ref{sec:critical-kkt}, in which a \L ojasiwicz inequality for KKT points of \eqref{eq:sota-max} will be given. We refer to \cite{dC-92,S-77,M-63,H-77} for basic facts of differential geometry, algebraic geometry and algebraic topology that will be used in the sequel.}

\subsection{Geometry of  orthogonally decomposable tensors}\label{subsec:geometry of codt}

Let
\begin{equation}\label{eq:paraset}
U_{\mathbf{n},r} \coloneqq V(r,n_1)\times \cdots \times V(r,n_k) \times \mathbb{R}_*^r.
\end{equation}
By the next proposition, $U_{\mathbf{n},r}$ parametrizes the manifold $D(\mathbf n,r)$.
\begin{proposition}\label{prop:smooth}
For each positive integer $r\leq\min\{n_1,\dots,n_k\}$, the map
\begin{align*}
\varphi_{\mathbf{n},r}: V(r,n_1)\times \cdots \times V(r,n_k) \times \mathbb{R}^r &\to C(\mathbf{n},r),\\
 (U^{(1)},\dots, U^{(k)}, (\lambda_1,\dots,\lambda_r)) &\mapsto   (U^{(1)},\dots,U^{(k)})\cdot \operatorname{diag}(\lambda_1,\dots, \lambda_r)
\end{align*}
is a surjective map and we have the following:
\begin{itemize}
\item The permutation group $\mathfrak{S}_r$ acts on $V_{\mathbf{n},r}$ such that $\varphi_{\mathbf{n},r}$ is $\mathfrak{S}_r$-invariant.
\item The inverse image $U_{\mathbf{n},r}=\varphi_{\mathbf{n},r}^{-1} (D(\mathbf n, r)) \subseteq V_{\mathbf{n},r}$ consists of points
\[
\big(U^{(1)},\dots, U^{(k)},(\lambda_1,\dots, \lambda_r)\big)
\]
such that $\lambda_j \ne 0$ for all $1\le j \le r$. In particular, $U_{\mathbf{n},r}$ is an open submanifold of $V_{\mathbf{n},r}$.
\item {$U_{\mathbf{n},r}$} is a principal $\mathfrak{S}_r$-bundle on $D(\mathbf{n},r)$, i.e., we have
\[
U_{\mathbf{n},r}/\mathfrak{S}_r \simeq D(\mathbf{n},r).
\]
\item $D(\mathbf n, r)$ is a smooth manifold of dimension
\[
d_{\mathbf{n},r} \coloneqq r\left[\sum_{i=1}^kn_i-\frac{k(r+1)}{2}+1\right].
\]
\item $C(\mathbf{n},r) = \bigsqcup_{t = 0}^r D(\mathbf{n},t)$ is an irreducible algebraic variety whose singularity locus is $\bigsqcup_{t = 0}^{r-1} D(\mathbf{n},t)$. In particular, $C(\mathbf{n},r) $ has dimension $d_{\mathbf{n},r}$.
\end{itemize}
\end{proposition}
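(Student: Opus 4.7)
The plan is to dispatch the six items in the order listed, since each depends on its predecessor. The surjectivity of $\varphi_{\mathbf{n},r}$ is immediate from the definition~\eqref{eq:codt-r} of $C(\mathbf{n},r)$. For the $\mathfrak{S}_r$-action, I let $\sigma\in \mathfrak{S}_r$ act on $V_{\mathbf{n},r}$ by simultaneously permuting the $r$ columns of each $U^{(i)}$ and the $r$ entries of $(\lambda_1,\dots,\lambda_r)$; the sum $\sum_j \lambda_j\, \mathbf{u}^{(1)}_j\otimes\cdots\otimes \mathbf{u}^{(k)}_j$ is patently unchanged, so $\varphi_{\mathbf{n},r}$ is $\mathfrak{S}_r$-invariant. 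The identification $U_{\mathbf{n},r}=\varphi_{\mathbf{n},r}^{-1}(D(\mathbf{n},r))$ follows directly from comparing~\eqref{eq:codt-r} with~\eqref{eq:codt-ex-r}, and since $\mathbb{R}^r_*$ is open in $\mathbb{R}^r$, $U_{\mathbf{n},r}$ is an open submanifold of $V_{\mathbf{n},r}$.

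To upgrade $U_{\mathbf{n},r}\to D(\mathbf{n},r)$ to a principal $\mathfrak{S}_r$-bundle, I first check that the $\mathfrak{S}_r$-action on $U_{\mathbf{n},r}$ is free: since $U^{(1)}\in V(r,n_1)$ has $r$ linearly independent (hence distinct) columns, any $\sigma$ fixing a point of $U_{\mathbf{n},r}$ is trivial. Then Lemma~\ref{lem:unique} identifies the fiber of $\varphi_{\mathbf{n},r}|_{U_{\mathbf{n},r}}$ over each $\mathcal{A}\in D(\mathbf{n},r)$ with a single $\mathfrak{S}_r$-orbit, which together with freeness shows $U_{\mathbf{n},r}/\mathfrak{S}_r\simeq D(\mathbf{n},r)$ and endows $D(\mathbf{n},r)$ with the structure of a smooth manifold of the same dimension as $U_{\mathbf{n},r}$. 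The dimension count is then a direct computation: $\dim V(r,n_i)=rn_i-r(r+1)/2$, summing over $i=1,\dots,k$ and adding $\dim \mathbb{R}^r_*=r$ yields $r\sum_i n_i - kr(r+1)/2+r=d_{\mathbf{n},r}$.

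For the algebraic-variety assertions, I observe that $C(\mathbf{n},r)$ is the image of the polynomial map $\varphi_{\mathbf{n},r}$ from the (complexified) irreducible algebraic variety $V_{\mathbf{n},r}$; since polynomial images of irreducible varieties are irreducible (with the same being true of their Zariski closures), $C(\mathbf{n},r)$ is irreducible of dimension $d_{\mathbf{n},r}$. The disjoint stratification $C(\mathbf{n},r)=\bigsqcup_{t=0}^r D(\mathbf{n},t)$ follows by applying the preceding analysis to every $t\le r$ and noting that a tensor in $C(\mathbf{n},r)$ belongs to exactly one $D(\mathbf{n},t)$ (the smallest $t$ for which a representation exists, again by Lemma~\ref{lem:unique}). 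The smoothness of $D(\mathbf{n},r)$ as a full-dimensional open dense subset of $C(\mathbf{n},r)$ gives one inclusion for the singular locus.

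The main obstacle is the reverse inclusion: showing that every tensor in $D(\mathbf{n},t)$ with $t<r$ is a genuine singular point of $C(\mathbf{n},r)$. My plan here is a dimension/tangent-cone argument: at $\mathcal{A}\in D(\mathbf{n},t)$ I would exhibit a family of distinct smooth arcs in $C(\mathbf{n},r)$ through $\mathcal{A}$ coming from independently perturbing the $r-t$ "zero summands" into orthogonal directions transverse to the span of the active frame, producing a tangent cone whose dimension strictly exceeds $d_{\mathbf{n},r}$; this forces $\mathcal{A}$ to be singular. Alternatively, one could invoke the Jacobian criterion applied to the restriction of $\varphi_{\mathbf{n},r}$ to the lower stratum together with the fact that every $D(\mathbf{n},t)$ for $t<r$ has strictly smaller dimension than $d_{\mathbf{n},r}$. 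Either route finishes the proposition.
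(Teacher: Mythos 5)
For the first five assertions (surjectivity, the $\mathfrak{S}_r$-action, the identification of $U_{\mathbf{n},r}$, the principal-bundle structure, the dimension count, irreducibility and the stratification) your route is essentially the paper's, and your explicit check that the action is free is a welcome detail the paper only asserts. One small elision: you say the characterization of $\varphi_{\mathbf{n},r}^{-1}(D(\mathbf{n},r))$ "follows directly from comparing the definitions," but the nontrivial direction is that a parameter point with some $\lambda_j=0$ cannot map into $D(\mathbf{n},r)$; for that you must know that a tensor of the form \eqref{eq:codt-ex-r} with all $\lambda_j\neq 0$ has rank exactly $r$ (so it cannot also lie in $C(\mathbf{n},r-1)$). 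The paper supplies this via the mode-$1$ flattening, whose rank is $r$ by orthogonality of the columns; you should include that step (or invoke Lemma~\ref{lem:unique}) rather than treat it as definitional.

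The genuine gap is in the singular-locus claim, which is the one place you depart from the paper (the paper itself only asserts that the rank of $d\varphi_{\mathbf{n},r}$ drops on the lower strata and that "this implies" singularity). Your primary plan — exhibit arcs through $\mathcal{A}\in D(\mathbf{n},t)$, $t<r$, whose tangent cone has "dimension strictly exceeding $d_{\mathbf{n},r}$" — cannot work as stated: the tangent cone of a $d$-dimensional variety at any point has dimension exactly $d$, so what you need is that the \emph{Zariski tangent space} (the linear span of the arc directions) exceeds $d_{\mathbf{n},r}$. Even after that correction the argument does not close in general. Take $k=3$, $n_1=n_2=n_3=2$, $r=2$, $t=1$, $\mathcal{A}=\lambda\,\mathbf e_1\otimes\mathbf e_1\otimes\mathbf e_1$: the tangent space to $D(\mathbf{n},1)$ is $4$-dimensional, the only transverse perturbation direction available is $\mathbf e_2\otimes\mathbf e_2\otimes\mathbf e_2$, and the span of all your arc directions is exactly $5=d_{\mathbf{n},2}$ dimensions, so no contradiction with smoothness is produced. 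Your fallback — the Jacobian criterion applied to $\varphi_{\mathbf{n},r}$ restricted to the lower stratum, plus the fact that $\dim D(\mathbf{n},t)<d_{\mathbf{n},r}$ — is also not a proof: the Jacobian criterion tests the defining equations of the variety, not a parametrization, and a point's membership in a lower-dimensional subvariety says nothing about singularity of the ambient variety there. To finish this bullet you would need either an honest computation with the local ring (e.g., showing the tangent cone at such a point is not a linear space) or a correct deduction from the behavior of $\varphi_{\mathbf{n},r}$; neither is supplied here, nor, to be fair, in the paper.
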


\begin{proof}
We recall that $V(r,n)$ consists of all $n\times r$ matrices whose columns are orthonormal. Hence $V(r,n)$ admits an $\mathfrak{S}_r$ action by permuting columns. In other words, an element $\sigma$ in $\mathfrak{S}_r$ can be written as an $r\times r$ permutation matrix $P_\sigma$, the action of $\mathfrak{S}_r$ on $V(r,n)$ is simply given by
\[
\mathfrak{S}_r \times V(r,n) \to V(r,n),\quad (\sigma,U) \mapsto U P_\sigma,
\]
and this induces an action
\begin{align*}
\mathfrak{S}_r \times \left(
V(r,n_1)\times \cdots \times V(r,n_k) \times \mathbb{R}^r
\right) &\to \left(
V(r,n_1)\times \cdots \times V(r,n_k) \times \mathbb{R}^r
\right) \\
(\sigma,(U^{(1)},\dots, U^{(k)},(\lambda_1,\dots,\lambda_r)) ) &\mapsto (U^{(1)}P_{\sigma},\dots,U^{(k)}P_{\sigma}, (\lambda_{\sigma(1)},\dots, \lambda_{\sigma(r)})).
\end{align*}
Now it is straightforward to verify that $\varphi_{\mathbf{n},r}$ is $\mathfrak{S}_r$-invariant.

Since $D(\mathbf{n},r)$ consists of all  orthogonally decomposable tensors with rank exactly $r$, its inverse image $U_{\mathbf{n},r} $ cannot contain a point of the form
\[
(U^{(1)},\dots,U^{(k)},(\lambda_1,\dots, \lambda_r))
\]
where $\lambda_j = 0$ for some $1\le j \le r$ by Lemma~\ref{lem:unique}. Moreover, we claim that any tensor of the form
\[
\mathcal{T} = \varphi_{\mathbf{n},r} \left( U^{(1)},\dots,U^{(k)},(\lambda_1,\dots, \lambda_r) \right) = \left( U^{(1)},\dots,U^{(k)} \right) \cdot \operatorname{diag}(\lambda_1,\dots, \lambda_r)
\]
where $\lambda_j \ne 0,1\le j \le r$, must lie in $D(\mathbf{n},r)$. Indeed, by definition, we see that $\mathcal{T}$ has rank at most $r$. Moreover, by the orthogonality of column vectors of each $U^{(j)},j=1,\dots, r$, the mode-$1$ matrix flattening $T^{(1)}\in \mathbb{R}^{n_1 \times (n_2 \cdots n_k)}$ of $\mathcal{T}$ has rank $r$, which implies that $\mathcal{T}$ has rank at least $r$ and hence $\mathcal{T}$ has rank $r
$ \cite{L-12,L-13}. This implies that $U_{\mathbf{n},r} $ is an open subset and hence an open submanifold of $V_{\mathbf{n},r}$.

We notice that $U_{\mathbf{n},r} $ admits an $\mathfrak{S}_r$-action by the restriction of that on $V_{\mathbf{n},r}$ and the fiber $\varphi_{\mathbf{n},r}^{-1}(\mathcal{T})\simeq \mathfrak{S}_r$ if $\mathcal{T}\in D(\mathbf{n},r)$. This implies that $U_{\mathbf{n},r} /\mathfrak{S}_r \simeq D(\mathbf{n},r)$.

Since $\mathfrak{S}_r$ is a finite group acting on $U_{\mathbf{n},r} $ freely, we conclude that $D(\mathbf{n},r) \simeq U_{\mathbf{n},r} /\mathfrak{S}_r$ is a smooth manifold whose dimension is
\[
\dim D(\mathbf{n},r) =\dim U_{\mathbf{n},r}  = \dim V_{\mathbf{n},r} = \sum_{j=1}^k \dim V(r,n_j) + \dim \mathbb{R}^r.
\]
Observing that $\dim V(r,n) = r(n-r) + \binom{r}{2}$, we obtain the desired formula for $\dim D(\mathbf{n},r) $.

The fact that $C(\mathbf{n},r)$ is an algebraic variety follows directly from the definition. Since $C(\mathbf{n},r)$ is the image of the irreducible algebraic variety $V(\mathbf{n},r)$ under the map $\varphi_{\mathbf{n},r}$, we may conclude that $C(\mathbf{n},r)$ is irreducible. It is straightforward to verify that the rank of the differential $d\varphi_{\mathbf{n},r}$ drops at points in $\bigsqcup_{t=0}^{r-1} D(\mathbf{n},t)$ and this implies that $\bigsqcup_{t=0}^{r-1} D(\mathbf{n},t)$ is the singular locus of $C(\mathbf{n},r)$.
\end{proof}

 We show in the next lemma that $U_{\mathbf n,r}$ is locally diffeomorphic to $D(\mathbf n,r)$.

\begin{lemma}[Local Diffeomorphism]\label{lem:localdiff}
For any positive integers $n_1,\dots,n_k$ and $r\leq \min\{n_1,\dots,n_k\}$, the set $U_{\mathbf n,r}$ is a smooth manifold and is locally diffeomorphic to the manifold $D(\mathbf n,r)$.
\end{lemma}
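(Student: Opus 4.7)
The plan is to exploit the structure already established in Proposition~\ref{prop:smooth}, namely that $U_{\mathbf n,r}$ is an open submanifold of the Cartesian product $V_{\mathbf n,r}$, that the symmetric group $\mathfrak S_r$ acts on $U_{\mathbf n,r}$ with quotient $U_{\mathbf n,r}/\mathfrak S_r \simeq D(\mathbf n,r)$, and that both $U_{\mathbf n,r}$ and $D(\mathbf n,r)$ have the same dimension $d_{\mathbf n,r}$. In view of these facts, the first assertion (that $U_{\mathbf n,r}$ is smooth) is immediate: an open subset of a smooth manifold is again a smooth manifold. So the substance of the lemma is the local diffeomorphism assertion, which is really a statement about the quotient map $\pi := \varphi_{\mathbf n,r}\vert_{U_{\mathbf n,r}} : U_{\mathbf n,r} \to D(\mathbf n,r)$.

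The strategy is to show that $\pi$ is a smooth covering map of degree $|\mathfrak S_r| = r!$, which in particular makes it a local diffeomorphism. To do this, I would first check that the $\mathfrak S_r$-action on $U_{\mathbf n,r}$ is free; this is the content of Lemma~\ref{lem:unique} (Unique Decomposition), since any nontrivial permutation of the columns of the $U^{(i)}$ would produce a genuinely different rank-$r$ decomposition of the same tensor in $D(\mathbf n, r)$, contradicting uniqueness. Freeness of a finite group action is automatically proper, and the standard quotient construction (see e.g.\ \cite{dC-92}) then endows $U_{\mathbf n,r}/\mathfrak S_r$ with a unique smooth structure for which $\pi$ is a smooth covering map. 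Under the identification $U_{\mathbf n,r}/\mathfrak S_r \simeq D(\mathbf n,r)$ from Proposition~\ref{prop:smooth}, this gives the claim.

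If I preferred to avoid appealing to the quotient-manifold theorem as a black box, I would argue directly. Fix a point $\mathbf p = (U^{(1)},\dots,U^{(k)},(\lambda_1,\dots,\lambda_r)) \in U_{\mathbf n,r}$. By freeness, the orbit $\mathfrak S_r \cdot \mathbf p$ consists of exactly $r!$ distinct points in $U_{\mathbf n,r}$, so I may choose an open neighborhood $W \subseteq U_{\mathbf n,r}$ of $\mathbf p$ disjoint from the remaining $r!-1$ orbit points. On $W$, the restriction $\pi\vert_W$ is an injective smooth map into $D(\mathbf n,r)$, and because $\dim W = \dim D(\mathbf n,r) = d_{\mathbf n,r}$, it suffices to show that $d\pi_{\mathbf p}$ is an isomorphism; its smooth local inverse then follows from the inverse function theorem. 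Non-degeneracy of $d\pi_{\mathbf p}$ can be verified by a direct computation from the definition of $\varphi_{\mathbf n,r}$, using the orthonormality of the columns of the $U^{(i)}$ and the fact that all $\lambda_j$ are nonzero on $U_{\mathbf n,r}$; equivalently it is forced by the equality of dimensions combined with surjectivity of $d\varphi_{\mathbf n,r}$ at points where no $\lambda_j$ vanishes (already observed in the proof of Proposition~\ref{prop:smooth} when identifying the singular locus of $C(\mathbf n,r)$).

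The main obstacle, if any, is really bookkeeping: one has to be careful that the inverse $\pi(W) \to W$ is smooth as a map between manifolds rather than merely continuous, and this is where the inverse function theorem applied on $W$ (a chart-level statement in $V_{\mathbf n,r}$) is doing the work. Once $d\pi_{\mathbf p}$ is known to be a linear isomorphism, the rest is routine and the lemma follows.
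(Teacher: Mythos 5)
Your proposal is correct and follows essentially the same route as the paper: the paper's proof likewise invokes the principal $\mathfrak S_r$-bundle structure from Proposition~\ref{prop:smooth} to conclude that each fiber of $\varphi_{\mathbf n,r}|_{U_{\mathbf n,r}}$ has $r!$ points and that the preimage of a small neighborhood splits into $r!$ sheets, each mapped diffeomorphically. Your additional remarks (freeness via Lemma~\ref{lem:unique}, and the alternative inverse-function-theorem argument) are consistent elaborations of the same idea rather than a different proof.
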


\begin{proof}
We recall from Proposition~\ref{prop:smooth} that $U_{\mathbf{n},r}$ is a principle $\mathfrak{S}_r$-bundle on $D(\mathbf{n},r)$. In particular, since $\mathfrak{S}_r$ is a finite group, for any $\mathcal{T}\in D(\mathbf{n},r)$, the fiber $\varphi_{\mathbf{n},r}^{-1}(\mathcal{T})$ of the map
\[
\varphi_{\mathbf{n},r}: U_{\mathbf{n},r} \to D(\mathbf{n},r)
\]
consists of $r!$ points. Therefore, for a small enough neighbourhood $W\subseteq D(\mathbf{n},r)$ of $\mathcal{T}$, the inverse image $\varphi_{\mathbf{n},r}^{-1}(W)$ is the disjoint union of $r!$ open subsets $W_1,\dots, W_{r!}\subseteq U_{\mathbf{n},r}$ and for each $j=1,\dots, r!$, we have
\[
(\varphi_{\mathbf{n},r})|_{W_j}: W_j \to W
\]
is a diffeomorphism.
\end{proof}

By Lemma~\ref{lem:localdiff} and Proposition~\ref{prop:critical}, problems on $D(\mathbf n,r)$ can be studied via problems on $U_{\mathbf n,r}$. To that end, the tangent space of $U_{\mathbf n,r}$ will be given {at first}. The following result can be checked directly, see \cite{AMS-08,EAT-98}.
\begin{proposition}[Tangent Space of $U_{\mathbf n, r}$]\label{prop:tangent}
At any point $(\mathbb U,\mathbf x)\in U_{\mathbf n,r}$, the tangent space of $U_{\mathbf n, r}$ at $(\mathbb U,\mathbf x)$ is
{\begin{equation}\label{eq:tangent-para}
\operatorname{T}_{(\mathbb U,\mathbf x)}U_{\mathbf n, r}=\operatorname{T}_{U^{(1)}} V(r,n_1)\times\dots\times\operatorname{T}_{U^{(k)}}V(r,n_k) \times\mathbb R^r,
\end{equation}
where $\operatorname{T}_{U^{(i)}} V(r,n_i)$ is the tangent space of the Stiefel manifold $V(r,n_i)$ at $U^{(i)}$, which is
\begin{equation}\label{eq:stiefel-tangent}
\operatorname{T}_{U^{(i)}} V(r,n_i)=\{Z\in\mathbb R^{n_i\times r}\colon (U^{(i)})^\tp Z+Z^\tp U^{(i)}=0\},
\end{equation}
}
for all $i=1,\dots,k$.
\end{proposition}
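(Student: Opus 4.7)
The plan is to reduce the statement to two standard ingredients: (a) the tangent space of a product manifold splits as the product of tangent spaces of the factors, and (b) the tangent space of an open submanifold at any of its points coincides with the tangent space of the ambient manifold. Both facts are standard consequences of the definition of a tangent vector as an equivalence class of smooth curves (or, equivalently, as a derivation at a point), so I will simply invoke them.

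First, I would recall from Proposition~\ref{prop:smooth} that $U_{\mathbf n,r}$ is an open submanifold of the product manifold $V_{\mathbf n,r}=V(r,n_1)\times\dots\times V(r,n_k)\times \mathbb R^r$; this is because $\mathbb R_*^r$ is open in $\mathbb R^r$, and a product of an open set with the other factors is open in the product. Consequently, at any point $(\mathbb U,\mathbf x)\in U_{\mathbf n,r}$ the inclusion $U_{\mathbf n,r}\hookrightarrow V_{\mathbf n,r}$ induces an equality
\[
\operatorname{T}_{(\mathbb U,\mathbf x)} U_{\mathbf n,r}=\operatorname{T}_{(\mathbb U,\mathbf x)} V_{\mathbf n,r}.
\]
Then I would apply the product rule for tangent spaces, giving
\[
\operatorname{T}_{(\mathbb U,\mathbf x)} V_{\mathbf n,r}=\operatorname{T}_{U^{(1)}}V(r,n_1)\times\dots\times\operatorname{T}_{U^{(k)}}V(r,n_k)\times\operatorname{T}_{\mathbf x}\mathbb R^r,
\]
and use $\operatorname{T}_{\mathbf x}\mathbb R^r\simeq\mathbb R^r$. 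This establishes \eqref{eq:tangent-para} once \eqref{eq:stiefel-tangent} is in hand.

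For \eqref{eq:stiefel-tangent}, the Stiefel manifold $V(r,n_i)$ is the preimage of the symmetric identity matrix $I\in\operatorname{S}^{r\times r}$ under the smooth map $F:\mathbb R^{n_i\times r}\to\operatorname{S}^{r\times r}$, $F(U)=U^\tp U$. A short computation gives the Fr\'echet differential $dF_{U^{(i)}}(Z)=(U^{(i)})^\tp Z+Z^\tp U^{(i)}$, which is surjective onto $\operatorname{S}^{r\times r}$ at any $U^{(i)}\in V(r,n_i)$ (take $Z=\tfrac12 U^{(i)}S$ for any $S\in\operatorname{S}^{r\times r}$). Hence $I$ is a regular value of $F$ and the tangent space at $U^{(i)}$ is exactly the kernel of $dF_{U^{(i)}}$, which is precisely the set on the right-hand side of \eqref{eq:stiefel-tangent}.

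There is no serious obstacle here; the result is essentially a bookkeeping exercise combining the product-manifold tangent formula, the open-submanifold tangent formula, and the standard regular-value-theorem computation for Stiefel manifolds. The only point to be a little careful about is verifying that $dF_{U^{(i)}}$ is indeed surjective so that the kernel description of $\operatorname{T}_{U^{(i)}}V(r,n_i)$ is valid, which as noted above is immediate. I would therefore keep the proof brief and cite \cite{AMS-08,EAT-98} for the Stiefel part, since the paper already flags these as standard references.
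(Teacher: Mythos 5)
Your proof is correct and follows exactly the standard route the paper has in mind: the paper itself gives no argument beyond "can be checked directly, see \cite{AMS-08,EAT-98}", and your combination of the open-submanifold and product-manifold tangent formulas with the regular-value computation for $F(U)=U^\tp U$ is precisely what those references supply. The surjectivity check via $Z=\tfrac12 U^{(i)}S$ is the right detail to include.
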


We can embed $U_{\mathbf n,r}$ into $\mathbb R^{n_1\times r}\times\dots\times\mathbb R^{n_k\times r}\times\mathbb R^r$ in an obvious way and hence $U_{\mathbf n,r}$ becomes an embedded submanifold of the latter. For a differentiable function $f : U_{\mathbf n,r}\subseteq \mathbb R^{n_1\times r}\times\dots\times\mathbb R^{n_k\times r}\times\mathbb R^r\to \mathbb R$, a critical point $(\mathbb U,\mathbf x)$ is a point at which the Riemannian gradient $\operatorname{grad} (f)(\mathbb U,\mathbf x)$ {of $f$ at {$(\mathbb U,\mathbf{x})$} is zero, which is equivalent to {the fact that} the projection of the Euclidean gradient $\nabla f(\mathbb U,\mathbf x)$ onto the tangent space of $U_{\mathbf n,r}$ at $(\mathbb U,\mathbf x)$ is zero.
More explicitly, we have the following characterization.

\begin{lemma}\label{lem:grand}
Let $A\in V(r,n)$ and let $f : V(r,n)\subseteq \mathbb R^{n\times r}\rightarrow \mathbb R$ be {a smooth function.} Then $\operatorname{grad}(f)(A)=0$ if and only if
\begin{equation}\label{eq:symmetry}
\nabla f(A) = A (\nabla f(A))^\tp A,
\end{equation}
{which is {also} equivalent to $\nabla f(A)=AP$ for {some} symmetric matrix $P\in\operatorname{ S}^{r\times r}$. In particular, $A^\tp \nabla f(A)$ is a symmetric matrix.}
\end{lemma}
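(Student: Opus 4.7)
The plan is to identify $\operatorname{grad}(f)(A)$ with the projection of $\nabla f(A)$ onto $\operatorname{T}_A V(r,n)$ and then exploit the explicit projection formula \eqref{eq:tangent-form}. Writing $B \coloneqq \nabla f(A)$, the condition $\operatorname{grad}(f)(A)=0$ becomes
\[
\pi_{\operatorname{T}_{V(r,n)}(A)}(B) = \bigl(I-\tfrac{1}{2}AA^\tp\bigr)\bigl(B-AB^\tp A\bigr)=0.
\]
The key observation that makes the lemma an easy consequence of this identity is that $I-\tfrac{1}{2}AA^\tp$ is invertible: because $A^\tp A = I_r$, the matrix $AA^\tp$ is the orthogonal projector onto the $r$-dimensional column space of $A$, so its eigenvalues are $1$ (with multiplicity $r$) and $0$ (with multiplicity $n-r$); hence $I-\tfrac{1}{2}AA^\tp$ has eigenvalues $\tfrac{1}{2}$ and $1$ and is positive definite. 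Therefore $\pi_{\operatorname{T}_{V(r,n)}(A)}(B)=0$ holds if and only if $B = AB^\tp A$, which is exactly \eqref{eq:symmetry}.

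Next I would establish the equivalence with the condition $\nabla f(A)=AP$ for some symmetric $P\in\operatorname{S}^{r\times r}$. For the forward direction, assume $B = AB^\tp A$. Left-multiplying by $A^\tp$ and using $A^\tp A = I_r$ yields $A^\tp B = B^\tp A = (A^\tp B)^\tp$, so $P\coloneqq A^\tp B$ is symmetric; substituting $B^\tp A = P$ back into $B=AB^\tp A$ gives $B = AP$. Conversely, if $B = AP$ with $P=P^\tp$, then $A^\tp B = P$, and
\[
A B^\tp A = A (AP)^\tp A = A P^\tp A^\tp A = AP = B,
\]
recovering \eqref{eq:symmetry}. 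The final assertion that $A^\tp \nabla f(A)$ is symmetric is then immediate, since $A^\tp (AP) = P$ is symmetric by construction.

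I do not anticipate a real obstacle here; the only nontrivial point is the invertibility of $I-\tfrac{1}{2}AA^\tp$, which collapses the somewhat awkward composite projection formula into the clean symmetry condition \eqref{eq:symmetry}. Everything else is a short algebraic manipulation using $A^\tp A = I_r$.
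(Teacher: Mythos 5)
Your proof is correct. The second equivalence (between \eqref{eq:symmetry} and $\nabla f(A)=AP$ with $P$ symmetric) is handled by exactly the same algebra as in the paper: left-multiply by $A^\tp$ to get symmetry of $A^\tp\nabla f(A)$, set $P=A^\tp\nabla f(A)$, and check the converse using $A^\tp A=I$. Where you genuinely diverge is on the first equivalence, $\operatorname{grad}(f)(A)=0\iff\eqref{eq:symmetry}$: the paper simply cites \cite[Proposition~1]{LWS-16} for this, whereas you derive it directly from the explicit projection formula \eqref{eq:tangent-form} together with the observation that $I-\tfrac12 AA^\tp$ is positive definite (eigenvalues $\tfrac12$ and $1$, since $AA^\tp$ is the orthogonal projector onto the column space of $A$), so that $\pi_{T_{V(r,n)}(A)}(B)=0$ forces $B=AB^\tp A$. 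This is a correct and pleasantly self-contained argument that buys independence from the external reference at essentially no cost; the paper's route buys brevity. The only (shared) loose end is that $\nabla f(A)$ presupposes $f$ is given on an ambient neighborhood of $V(r,n)$ in $\mathbb R^{n\times r}$, which the paper also leaves implicit.
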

\begin{proof}
{The proof of the first equivalence can be found in \cite[Proposition~1]{LWS-16}. For the second, we notice that from \eqref{eq:symmetry}
\[
A^\tp \nabla f(A)  = A^\tp A (\nabla f(A))^\tp A = \nabla f(A)^\tp A
\]
and this proves that $A^\tp \nabla f(A)$ is symmetric. Now if we set $P \coloneqq A^\tp \nabla f(A) $ then \eqref{eq:symmetry} can be written as
\[
\nabla f(A) = AP^\tp = AP.
\]
Conversely, if $\nabla f(A)  = AP$ for some symmetric matrix $P$, then \eqref{eq:symmetry} obviously holds by the symmetry of $P$ and the fact that $A^\tp A = I$.}
\end{proof}

Let
\begin{equation}\label{eq:objective-p}
g(\mathbb U,\mathbf x):=\frac{1}{2}\|\mathcal A-(U^{(1)},\dots,U^{(k)})\cdot\operatorname{diag}(\mathbf x)\|^2
\end{equation}
be the objective function of \eqref{eq:sota-proj}. {Since the feasible set {$D(\mathbf n,r)$} (resp. $U_{\mathbf n,r}$) of \eqref{eq:projection} (resp. \eqref{eq:sota-proj})} is a smooth manifold,
we may apply Proposition~\ref{prop:critical}, Lemmas~\ref{lem:generic-morse} and \ref{lem:localdiff} to obtain the following
\begin{proposition}\label{prop:nondegnerate}
For a generic tensor $\mathcal A$, each critical point of the function $g$ on $U_{\mathbf n,r}$ is nondegenerate.
\end{proposition}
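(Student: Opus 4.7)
The plan is to exploit the factorization $g = f \circ \varphi_{\mathbf n,r}$ (on $U_{\mathbf n, r}$) where $f : D(\mathbf n, r) \to \mathbb R$ is the restriction of the squared Euclidean distance function $\mathcal B \mapsto \tfrac{1}{2} \lVert \mathcal A - \mathcal B\rVert^{2}$, and then to combine three results that have already been assembled in this section: Proposition~\ref{prop:smooth} (which endows $D(\mathbf n, r)$ with a smooth submanifold structure in the ambient tensor space $\mathbb R^{n_1}\otimes\dots\otimes\mathbb R^{n_k}$), Lemma~\ref{lem:generic-morse} (the squared distance to a generic point is Morse on any submanifold of Euclidean space), and Lemma~\ref{lem:localdiff} together with Proposition~\ref{prop:critical} (the local diffeomorphism $\varphi_{\mathbf n,r}$ preserves both critical points and their nondegeneracy).

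Concretely, first I would record that, by the very definition of $\varphi_{\mathbf n, r}$ and the fact that $U_{\mathbf n, r} = \varphi_{\mathbf n, r}^{-1}(D(\mathbf n, r))$, one has
\[
g(\mathbb U, \mathbf x) \;=\; \tfrac{1}{2}\lVert \mathcal A - \varphi_{\mathbf n, r}(\mathbb U, \mathbf x) \rVert^{2} \;=\; (f\circ \varphi_{\mathbf n,r})(\mathbb U, \mathbf x)
\]
for every $(\mathbb U, \mathbf x) \in U_{\mathbf n, r}$. Next I would apply Lemma~\ref{lem:generic-morse} to the submanifold $M = D(\mathbf n,r)$ of the tensor space $\mathbb R^{n_1}\otimes\dots\otimes\mathbb R^{n_k}\simeq\mathbb R^{n_1\cdots n_k}$ (using Proposition~\ref{prop:smooth} to guarantee that $M$ is indeed a smooth submanifold of this Euclidean space), which yields that for a generic $\mathcal A$ every critical point of $f$ on $D(\mathbf n, r)$ is nondegenerate; the harmless scalar factor $\tfrac{1}{2}$ and the change of sign between $\mathbf a - \mathbf x$ and $\mathcal A - \mathcal B$ obviously do not affect this conclusion.

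Finally, Lemma~\ref{lem:localdiff} asserts that $\varphi_{\mathbf n, r} : U_{\mathbf n, r} \to D(\mathbf n, r)$ is a local diffeomorphism, so Proposition~\ref{prop:critical} applied to $\varphi = \varphi_{\mathbf n,r}$, $M_1 = U_{\mathbf n, r}$, $M_2 = D(\mathbf n, r)$, and the smooth function $f$ gives that $(\mathbb U, \mathbf x) \in U_{\mathbf n, r}$ is a nondegenerate critical point of $g = f\circ \varphi_{\mathbf n,r}$ if and only if $\varphi_{\mathbf n, r}(\mathbb U, \mathbf x)$ is a nondegenerate critical point of $f$ on $D(\mathbf n, r)$. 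Chaining the two implications gives the proposition for any tensor $\mathcal A$ in the generic subset supplied by Lemma~\ref{lem:generic-morse}.

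The only delicate point I anticipate is the hypothesis of Lemma~\ref{lem:generic-morse}: it requires the target set to be an embedded submanifold of an ambient Euclidean space, and here $D(\mathbf n, r)$ is described intrinsically as the quotient $U_{\mathbf n,r}/\mathfrak S_r$. This is handled by Proposition~\ref{prop:smooth}, which identifies $D(\mathbf n, r)$ with the regular locus of the irreducible algebraic variety $C(\mathbf n, r) \subseteq \mathbb R^{n_1}\otimes\dots\otimes\mathbb R^{n_k}$; as the smooth locus of an algebraic subvariety of Euclidean space, $D(\mathbf n, r)$ is automatically an embedded submanifold of the ambient tensor space, so Lemma~\ref{lem:generic-morse} applies verbatim. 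Everything else is a direct invocation of the machinery already set up in Sections~\ref{sec:morse} and~\ref{subsec:geometry of codt}.
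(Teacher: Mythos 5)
Your proposal is correct and follows exactly the route the paper takes: the paper's (one-line) justification is precisely the combination of Lemma~\ref{lem:generic-morse} applied to the submanifold $D(\mathbf n,r)$, Lemma~\ref{lem:localdiff}, and Proposition~\ref{prop:critical}, which you have spelled out carefully, including the point that $D(\mathbf n,r)$ is an embedded submanifold of the ambient tensor space via Proposition~\ref{prop:smooth}.
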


\subsection{KKT points of LROTA}\label{sec:kkt}
In this subsection, we will derive the KKT system {of} the optimization problem \eqref{eq:sota-max} and study its properties.

\subsubsection{Existence}\label{sec:kkt-existence}
{
Let $\mathbb U \coloneqq (U^{(1)},\dots,U^{(k)})$ be the collection of the variable matrices in \eqref{eq:sota-max} and for each $1 \le i \le k$ and $1 \le j \le r$, let $\mathbf u^{(i)}_j$ be the $j$-th column of the matrix $U^{(i)}$ and let
\[
\mathbf x_j \coloneqq (\mathbf u^{(1)}_j,\dots,\mathbf u^{(k)}_j)\ \text{and }\mathbf v^{(i)}_j \coloneqq \mathcal A\tau_i(\mathbf x_j).
\]
For {each} $1 \le i \le k$, {we define a matrix}
\begin{equation}\label{eq:critial}
V^{(i)}:=\begin{bmatrix}\mathbf v^{(i)}_1&\dots&\mathbf v^{(i)}_r\end{bmatrix},
\end{equation}
{and a diagonal matrix}
\begin{equation}\label{eq:lambda-tensor}
\Lambda \coloneqq \operatorname{diag}(\mathcal A\tau(\mathbf x_1),\dots,\mathcal A\tau(\mathbf x_r)).
\end{equation}
{{For each} $1 \le j \le r$, we also set}
\begin{equation}\label{eq:lambda}
\lambda_j(\mathbb U) \coloneqq \Big(\big(\big(U^{(1)}\big)^\tp ,\dots,\big(U^{(k)}\big)^\tp \big)\cdot\mathcal A\Big)_{j\dots j}=\mathcal A\tau(\mathbf x_j)=\langle\mathcal A,\mathbf u^{(1)}_j\otimes\dots\otimes\mathbf u^{(k)}_j\rangle
\end{equation}
Now the objective function of \eqref{eq:sota-max} can be  re-written as
\begin{equation}\label{eq:objective}
f(\mathbb U) \coloneqq
\sum_{j=1}^r\Big(\big(\big(U^{(1)}\big)^\tp ,\dots,\big(U^{(k)}\big)^\tp \big)\cdot\mathcal A\Big)_{j\dots j}^2 = \sum_{j=1}^r\lambda_j(\mathbb U)^2.
\end{equation}
}
{\begin{definition}[KKT point]\label{def:kkt}
Let $\mathbb U =(U^{(1)},\dots,U^{(k)})$ be a feasible point  of \eqref{eq:sota-max}. If there exists $\mathbb P=(P_1,\dots,P_k)$ where $P_i\in \operatorname{S}^{r\times r}$ for each $1\le i \le k$ such that the system
\begin{equation}\label{eq:kkt}
V^{(i)}\Lambda-U^{(i)}P_i=0,\quad 1 \le i \le k.
\end{equation}
is satisfied, then $\mathbb{U}$ is called a Karush-Kuhn-Tucker point (KKT point) and $\mathbb{P}$ is called a Lagrange multiplier associated to $\mathbb{U}$. The set of all multipliers associated to $\mathbb U$ is denoted by $\operatorname{M}(\mathbb U)$.
\end{definition}
}
{It follows immediately from the system} \eqref{eq:kkt} that for all $1\le i \le k$,
\begin{equation}\label{eq:kkt-symmetry}
(U^{(i)})^\tp V^{(i)}\Lambda=
\big(V^{(i)}\Lambda\big)^\tp U^{(i)}.
\end{equation}

For an equality constrained optimization problem, we say that a feasible point satisfies \textit{linear independence constraint qualification} (LICQ) if at this point all the gradients of the constraints are {linearly} independent.
\begin{proposition}[LICQ]\label{prop:licq}
At any feasible point of the problem \eqref{eq:sota-max}, LICQ is satisfied. Thus, at any local maximizer of \eqref{eq:sota-max}, the system of KKT condition holds and $\operatorname{M}(\mathbb U)$ is a singleton.
\end{proposition}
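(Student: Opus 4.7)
The plan is to verify LICQ by exploiting the block structure of the constraints in \eqref{eq:sota-max} and then invoking the standard equality-constrained KKT theory to deduce both existence and uniqueness of the multiplier at local optima.

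First I would observe that the constraint on $U^{(i)}$ depends only on $U^{(i)}$ (not on $U^{(j)}$ for $j\neq i$), so the Jacobian of the combined constraint map decomposes into $k$ independent blocks. Hence it suffices to establish LICQ for a single Stiefel constraint $h_i : \mathbb R^{n_i\times r}\to \operatorname{S}^{r\times r}$ defined by $h_i(U^{(i)}) := (U^{(i)})^\tp U^{(i)} - I$. The $r(r+1)/2$ scalar constraints produced by $h_i$ have linearly independent gradients at $U^{(i)}\in V(r,n_i)$ if and only if the differential
\[
dh_i(U^{(i)})(Z) = Z^\tp U^{(i)} + (U^{(i)})^\tp Z
\]
is surjective onto $\operatorname{S}^{r\times r}$.

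Next I would verify this surjectivity by an explicit construction: given any $S\in \operatorname{S}^{r\times r}$, set $Z := \tfrac{1}{2}U^{(i)} S$. Using $(U^{(i)})^\tp U^{(i)} = I$ and $S^\tp = S$ we get $dh_i(U^{(i)})(Z) = \tfrac{1}{2}(S^\tp + S) = S$. This establishes surjectivity at every feasible $U^{(i)}$, and therefore LICQ holds at every feasible point of \eqref{eq:sota-max}. Incidentally, this is essentially the standard submersion argument that makes $V(r,n_i)$ a smooth manifold of the dimension recorded in Proposition~\ref{prop:smooth}.

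Finally, the second part of the statement will follow from classical first-order necessary conditions: at a local maximizer of a smooth function subject to equality constraints satisfying LICQ, the KKT conditions \eqref{eq:kkt} hold with a unique multiplier. Uniqueness here is transparent: left-multiplying \eqref{eq:kkt} by $(U^{(i)})^\tp$ and using $(U^{(i)})^\tp U^{(i)} = I$ gives $P_i = (U^{(i)})^\tp V^{(i)}\Lambda$, so $\mathbb P = (P_1,\dots,P_k)$ is uniquely determined by $\mathbb U$ (the symmetry requirement $P_i\in \operatorname{S}^{r\times r}$ is then automatic at a KKT point by \eqref{eq:kkt-symmetry}). There is no real obstacle in this argument; the only content is the one-line preimage computation in Step 2, and everything else is bookkeeping and appeal to the standard multiplier rule.
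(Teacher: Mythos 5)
Your proof is correct and is essentially the paper's argument in dual form: the paper shows the adjoint map $P_i\mapsto U^{(i)}P_i$ is injective (a nontrivial null combination of constraint gradients would force $U^{(i)}P_i=0$ and hence $P_i=0$ by orthonormality), while you show the differential $Z\mapsto Z^\tp U^{(i)}+(U^{(i)})^\tp Z$ is surjective by exhibiting the preimage $Z=\tfrac{1}{2}U^{(i)}S$; both rest on the same block decomposition and the same use of $(U^{(i)})^\tp U^{(i)}=I$. Your closing remark giving the explicit multiplier $P_i=(U^{(i)})^\tp V^{(i)}\Lambda$ is a harmless (and correct) addition beyond what the paper records here.
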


\begin{proof}
Suppose on the contrary that LICQ is not satisfied at a feasible point $\mathbb U:=(U^{(1)},\dots,U^{(k)})$ of \eqref{eq:sota-max}. Let $P_i\in\mathbb R^{r\times r}$ for $i=1,\dots,k$ be the corresponding multipliers for the equality constraints in \eqref{eq:sota-max} such that they are not all zero. {To be more precise, $P_i$'s are defined by}
\[
\nabla_{\mathbb U} \bigg(\sum_{i=1}^k\langle \big(U^{(i)}\big)^\tp U^{(i)}-I,P_i\rangle \bigg)=0.
\]
Aligning along the natural block partition as $\mathbb U$,
we must have
\begin{equation}\label{eq:kkt-2}
\langle U^{(i)}P_i,M^{(i)}\rangle=0,\quad M^{(i)}\in\mathbb R^{n_i\times r}, 1\le i \le k.
\end{equation}
Now from \eqref{eq:kkt-2} we obtain $U^{(i)}P_i=0$ and hence $P_i=0$ by the orthonormality of $U^{(i)}$ for all $1 \le i \le k$, {and this contradicts the assumption that not all $P_i$'s are zero. Therefore, LICQ is satisfied, which implies the uniqueness of the multiplier.} The rest conclusion follows from the classical theory of KKT condition \cite{B-99}.
\end{proof}

\begin{lemma}\label{lem:kkt-equiv}
A feasible point $(\mathbb U,\Upsilon)$ is a KKT point of problem \eqref{eq:sota} with multiplier $\mathbb P\coloneqq (P^{(1)},\dots,P^{(k)})$ if and only if $\mathbb U$ is a KKT point of problem \eqref{eq:sota-max} with multiplier $\mathbb P$ and $\Upsilon = \operatorname{diag}\big(\operatorname{Diag}\big(((U^{(1)})^\tp ,\dots,(U^{(k)})^\tp )\cdot\mathcal A\big)\big)$.
\end{lemma}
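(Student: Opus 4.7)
The plan is to derive the KKT systems of both problems directly from the Lagrangians and then show that, after using the unconstrained stationarity condition in $\upsilon_j$ to eliminate $\Upsilon$, the stationarity conditions in $U^{(i)}$ coincide with those of problem~\eqref{eq:sota-max}. Proposition~\ref{prop:licq} ensures that LICQ holds at every feasible point, so the multipliers exist and are unique whenever a KKT point is given.

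For the forward direction, I would begin by expanding the objective of \eqref{eq:sota} as
\[
\tfrac{1}{2}\|\mathcal A\|^2 - \big\langle \mathcal A,(U^{(1)},\dots,U^{(k)})\cdot \Upsilon\big\rangle +\tfrac{1}{2}\big\|(U^{(1)},\dots,U^{(k)})\cdot \Upsilon\big\|^2,
\]
and using the orthonormality of each $U^{(i)}$ to reduce the last summand to $\tfrac{1}{2}\sum_{j=1}^r \upsilon_j^2$. Differentiating the Lagrangian (formed by attaching symmetric multipliers $P^{(i)}$ to the constraints $(U^{(i)})^\tp U^{(i)}=I$) with respect to the unconstrained $\upsilon_j$ gives $\upsilon_j = \lambda_j(\mathbb U)=\mathcal A\tau(\mathbf x_j)$, which is exactly the formula $\Upsilon=\operatorname{diag}\big(\operatorname{Diag}\big(((U^{(1)})^\tp,\dots,(U^{(k)})^\tp)\cdot\mathcal A\big)\big)$ claimed by the lemma. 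Next, the partial derivative with respect to $U^{(i)}$ produces the inner-product gradient $-V^{(i)}\Upsilon$ (using the definition \eqref{eq:critial}) together with the norm-squared gradient, which at a feasible $\mathbb U$ equals $U^{(i)}\Upsilon^2$, plus the constraint gradient along $U^{(i)}$. Substituting $\Upsilon = \Lambda$, rearranging, and absorbing the diagonal term $U^{(i)}\Lambda^2$ into the symmetric multiplier, this condition becomes $V^{(i)}\Lambda - U^{(i)}P^{(i)} = 0$ with $P^{(i)}\in\operatorname{S}^{r\times r}$, which is precisely the KKT system \eqref{eq:kkt} of \eqref{eq:sota-max}.

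The reverse direction is essentially a matter of reading the calculation backwards. Starting from a KKT point $\mathbb U$ of \eqref{eq:sota-max} with multiplier $\mathbb P$, define $\Upsilon$ by the stated diagonal formula; then by construction $\upsilon_j = \lambda_j(\mathbb U)$, so the $\upsilon_j$-stationarity holds, while \eqref{eq:kkt} together with the identity $U^{(i)}\Lambda^2 \in U^{(i)}\operatorname{S}^{r\times r}$ shows that the $U^{(i)}$-stationarity of the Lagrangian of \eqref{eq:sota} is also satisfied (with a multiplier that is the natural symmetric correction of $\mathbb P$). Uniqueness of the multiplier from Proposition~\ref{prop:licq} identifies the two systems.

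The main obstacle is simply bookkeeping of the gradient computation for the matrix-tensor product $(U^{(1)},\dots,U^{(k)})\cdot\operatorname{diag}(\upsilon)$ and verifying that the extra term $U^{(i)}\Upsilon^2$ arising from the norm-squared part is normal to the Stiefel manifold $V(r,n_i)$ at $U^{(i)}$ and hence can be absorbed into the symmetric Lagrange multiplier via the characterization $N_{V(r,n_i)}(U^{(i)})=\{U^{(i)}S:S\in\operatorname{S}^{r\times r}\}$ recalled in Section~\ref{sec:stiefel}. Once this normal-space absorption is made explicit, the equivalence of the two KKT systems drops out immediately from matching coefficients.
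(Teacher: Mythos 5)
Your proposal is correct, and it is in fact more explicit than what the paper provides: the paper's entire proof of this lemma is a one-line appeal to Proposition~\ref{prop:sota-max} (the equivalence of \eqref{eq:sota} and \eqref{eq:sota-max} after partially minimizing over the unconstrained $\upsilon_j$'s), whereas you write out both Lagrangians and match the first-order systems term by term. Your route is arguably the more rigorous one, since equivalence of two optimization problems as in Proposition~\ref{prop:sota-max} is a statement about optimizers and optimal values and does not by itself yield a bijection between \emph{all} KKT points; the direct computation you sketch --- $\upsilon_j$-stationarity forcing $\upsilon_j=\lambda_j(\mathbb U)$, and the $U^{(i)}$-stationarity reducing to $V^{(i)}\Lambda\in U^{(i)}\operatorname{S}^{r\times r}$, i.e.\ \eqref{eq:kkt} --- is exactly the content that the paper's citation leaves implicit. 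Two small points of bookkeeping: you work with $\tfrac12\|\cdot\|^2$ (the objective $g$ of \eqref{eq:sota-proj}) rather than the unscaled objective of \eqref{eq:sota}, which only rescales the multipliers; and, as you correctly observe, the multiplier of \eqref{eq:sota-max} is obtained from that of \eqref{eq:sota} only after absorbing the normal-space term $U^{(i)}\Upsilon^2=U^{(i)}\Lambda^2$, so the two multipliers coincide only up to this symmetric shift (and the sign/scaling convention). The lemma's assertion that the multiplier is literally the same $\mathbb P$ is therefore a slight imprecision of the statement itself, which the paper's proof does not address and which your ``absorption'' step handles correctly.
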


\begin{proof}
{According to Proposition~\ref{prop:sota-max}, problem \eqref{eq:sota}  is equivalent to \eqref{eq:sota-max}, from which we may obtain the desired correspondence between KKT points.}
\end{proof}

\subsubsection{Primitive KKT points and essential KKT points}\label{sec:degenerate-kkt}
{
It is possible that for some $1 \le j \le r$, $v_j$ approaches to zero along iterations of an algorithm solving the problem \eqref{eq:sota}.} In this case, the {resulting}  orthogonally decomposable tensor is of rank strictly smaller than $r$. We will discuss this degenerate case in this section.
\begin{proposition}[KKT Reduction]\label{prop:degenerate}
Let $\mathbb U=(U^{(1)},\dots,U^{(k)})\in V(r,n_1)\times\dots\times V(r,n_k)$ be a KKT point of problem mLROTA(r) defined in \eqref{eq:sota-max} {and let $1 \le j \le r$ be a fixed integer. Set
\[
\hat{\mathbb U} \coloneqq (\hat U^{(1)},\dots,\hat U^{(k)})\in V(r-1,n_1)\times\dots\times V(r-1,n_k),
\]
where for each $1\le i \le k$, $\hat U^{(i)}$ is the matrix obtained by deleting the {$j$-th} column of $U^{(i)}$. If $\mathcal A\tau(\mathbf x_j)=0$, then $\hat{\mathbb U}$ is a KKT point of the problem mLROTA(r-1):}
\begin{equation}\label{eq:sota-max-matrix-r}
\begin{array}{rl}\max&\|\operatorname{Diag}\big(\big(\big(U^{(1)}\big)^\tp ,\dots,\big(U^{(k)}\big)^\tp \big)\cdot\mathcal A\big)\|^2\\
\rm{s.t.} & \big(U^{(i)}\big)^\tp U^{(i)}=I,\ U^{(i)}\in \mathbb R^{n_i\times (r-1)} , 1\le i \le k.
\end{array}
\end{equation}
\end{proposition}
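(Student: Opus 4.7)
\medskip

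\noindent\textbf{Proof plan.} The plan is to exploit the fact that the KKT equations in Definition~\ref{def:kkt} act column-by-column in a natural way, so that the hypothesis $\mathcal A\tau(\mathbf x_j)=0$ forces the multiplier to have a decoupled ``$j$-th row and column''. First, I would write out the KKT system for $\mathbb U$: there exist symmetric matrices $P_i\in\operatorname{S}^{r\times r}$, $1\le i\le k$, such that $V^{(i)}\Lambda=U^{(i)}P_i$. Under the hypothesis $\mathcal A\tau(\mathbf x_j)=\lambda_j(\mathbb U)=0$, the $j$-th diagonal entry of $\Lambda$ is zero, so the $j$-th column of $V^{(i)}\Lambda$ is zero, and consequently the $j$-th column of $U^{(i)}P_i$ is zero. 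Since the columns of $U^{(i)}$ are orthonormal and therefore linearly independent, this forces the $j$-th column of $P_i$ to vanish; by symmetry of $P_i$, the $j$-th row also vanishes.

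Next, I would let $\hat P_i\in\operatorname{S}^{(r-1)\times (r-1)}$ be the matrix obtained by deleting the $j$-th row and column from $P_i$, and verify that the pair $(\hat{\mathbb U},\hat{\mathbb P})$ satisfies the KKT system of mLROTA(r-1). The crucial structural observation is that for each index $\ell\neq j$ and each $i$, the vector $\mathbf v^{(i)}_\ell=\mathcal A\tau_i(\mathbf x_\ell)$ depends only on the $\ell$-th columns of the matrices $U^{(1)},\dots,U^{(k)}$ and not on their $j$-th columns; hence $\mathbf v^{(i)}_\ell$ computed for the reduced data $\hat{\mathbb U}$ agrees with that computed for $\mathbb U$. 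Therefore the matrix $\hat V^{(i)}$ associated with the reduced problem is exactly $V^{(i)}$ with the $j$-th column deleted, and $\hat\Lambda$ is $\Lambda$ with the $j$-th row and column deleted. Reading off the columns $\ell\neq j$ of the identity $V^{(i)}\Lambda=U^{(i)}P_i$ and using that the $j$-th row of $P_i$ vanishes (so that $U^{(i)}(P_i)_{\cdot,\ell}=\hat U^{(i)}(\hat P_i)_{\cdot,\ell}$ for $\ell\neq j$), I obtain $\hat V^{(i)}\hat\Lambda=\hat U^{(i)}\hat P_i$ for $1\le i\le k$.

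Finally, orthonormality of $\hat U^{(i)}$ is immediate since deleting a column from an orthonormal $k$-frame produces an orthonormal $(k-1)$-frame, so $\hat{\mathbb U}$ is feasible for \eqref{eq:sota-max-matrix-r}; together with the derived KKT identities and the symmetry of each $\hat P_i$, this completes the proof. I do not anticipate a genuine obstacle here; the only point demanding care is the bookkeeping in step two, namely justifying that the restriction and the ``delete $j$-th row/column of $P_i$'' operations commute with the formation of $V^{(i)}\Lambda$, which is transparent once one observes that the KKT equations decouple columnwise.
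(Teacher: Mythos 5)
Your proposal is correct and follows essentially the same route as the paper's proof: both deduce from $\lambda_j(\mathbb U)=0$ that the $j$-th column (hence, by symmetry, the $j$-th row) of each multiplier $P_i$ vanishes, and then restrict the KKT identity $V^{(i)}\Lambda=U^{(i)}P_i$ to the remaining columns. Your explicit remark that $\mathbf v^{(i)}_\ell$ for $\ell\neq j$ does not involve the deleted columns is the same bookkeeping the paper leaves implicit when it writes the block-diagonal form of $P_i$ after assuming $j=r$ without loss of generality.
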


\begin{proof}
By \eqref{eq:kkt}, the KKT system of problem \eqref{eq:sota-max} is
\[
V^{(i)}\Lambda = U^{(i)}P_i\ \text{for all }i=1,\dots,k,
\]
where {$(P_1,\dots,P_k) \in \operatorname{S}^{r\times r} \times \cdots \times \operatorname{S}^{r\times r}$} is the associated Lagrange multiplier. Without loss of generality, we may assume that $j=r$, which implies that the last diagonal element of $\Lambda$ is zero. Thus,
\[
(U^{(i)})^\tp \begin{bmatrix}\mathbf v^{(i)}_1&\dots&\mathbf v^{(i)}_{r-1}&\mathbf v^{(i)}_r\end{bmatrix}\begin{bmatrix}\hat \Lambda&\mathbf 0\\ \mathbf 0& 0\end{bmatrix}=P_i,\quad 1\le i \le k,
\]
where $\hat\Lambda$ is the leading $(r-1)\times (r-1)$ principal submatrix of $\Lambda$.
This implies that the last column of $P_i$ is zero. By the symmetry of $P_i$, we conclude that $P_i$ is in a block diagonal {form} with
\[
P_i=\begin{bmatrix}\hat P_i&\mathbf 0\\ \mathbf 0& 0\end{bmatrix}, \quad 1\le i \le k.
\]
Therefore we have
\[
\begin{bmatrix}\mathbf v^{(i)}_1&\dots&\mathbf v^{(i)}_{r-1}&\mathbf 0\end{bmatrix}\begin{bmatrix}\hat \Lambda&\mathbf 0\\ \mathbf 0&0\end{bmatrix}=\begin{bmatrix}\hat U^{(i)}&\mathbf u^{(i)}_r\end{bmatrix}\begin{bmatrix}\hat P_i&\mathbf 0\\ \mathbf 0& 0\end{bmatrix},\quad 1\le i \le k,
\]
which implies
\[
\begin{bmatrix}\mathbf v^{(i)}_1&\dots&\mathbf v^{(i)}_{r-1}\end{bmatrix}\hat\Lambda=\hat U^{(i)}\hat P_i,\quad 1\le i \le k.
\]
Consequently, we {may} conclude that $\hat{\mathbb U}$ is a KKT point of \eqref{eq:sota-max-matrix-r}.
\end{proof}

A KKT point $\mathbb U=(U^{(1)},\dots,U^{(k)})\in V(r,n_1)\times\dots\times V(r,n_k)$ of problem mLROTA(r) (cf.\ \eqref{eq:sota-max}) with $\mathcal A\tau(\mathbf x_j)\neq 0$ for all $1\le j \le r$ is called a \textit{primitive KKT point} of mLROTA(r). {Iteratively applying Proposition~\ref{prop:degenerate}, we obtain the following
\begin{corollary}\label{cor:essential-primitive}
Let $S$ be a proper subset of $\{1,\dots,r\}$ with cardinality $s:=|S|<r$ and let $\mathbb U=(U^{(1)},\dots,U^{(k)})\in V(r,n_1)\times\dots\times V(r,n_k)$ be a KKT point of mLROTA(r). Set
\[
\hat{\mathbb U} \coloneqq (\hat U^{(1)},\dots,\hat U^{(k)})\in V(r-s,n_1)\times\dots\times V(r-s,n_k),
\]
where for each $1 \le i \le k$, $\hat U^{(i)}$ is obtained by deleting those columns indexed by $S$. If $\mathcal A\tau(\mathbf x_j)=0$ for all $j\in S$, then $\hat{\mathbb U}$ is a primitive KKT point of mLROTA(r-s).
\end{corollary}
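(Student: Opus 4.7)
The plan is to prove the corollary by induction on $s = |S|$, using Proposition~\ref{prop:degenerate} as the inductive engine. The proposition handles exactly the case $s=1$, so the base case is immediate once one observes that after deleting a single column corresponding to an index $j$ with $\mathcal{A}\tau(\mathbf{x}_j)=0$, the reduced tuple is a KKT point of mLROTA($r-1$).

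For the inductive step, suppose the corollary has been established for all index sets of cardinality at most $s-1$, and let $S=\{j_1,\dots,j_s\}\subseteq \{1,\dots,r\}$ with $|S|=s$ and $\mathcal A\tau(\mathbf x_j)=0$ for every $j\in S$. First I would single out $j_s\in S$ and apply Proposition~\ref{prop:degenerate} to conclude that the tuple $\tilde{\mathbb U}$ obtained from $\mathbb U$ by deleting the $j_s$-th column of each $U^{(i)}$ is a KKT point of mLROTA($r-1$). The key observation that needs to be made carefully is that deleting a single column does not affect the quantities $\mathcal A\tau(\mathbf x_j)$ for the remaining indices: since $\mathcal A\tau(\mathbf x_j)$ depends only on the column vectors $\mathbf u^{(1)}_j,\dots,\mathbf u^{(k)}_j$ and not on the other columns, the vanishing hypothesis $\mathcal A\tau(\mathbf x_j)=0$ is preserved for every $j\in S\setminus\{j_s\}$ in the reduced problem (with indices relabelled accordingly). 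Therefore the inductive hypothesis applies to $\tilde{\mathbb U}$ with the relabelled $(s-1)$-element index set, yielding a primitive KKT point of mLROTA($r-s$) which coincides with $\hat{\mathbb U}$.

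To conclude primitivity, I would note that the condition $\mathcal{A}\tau(\mathbf{x}_j)\ne 0$ for the surviving indices $j\notin S$ must be implicit in the setup (otherwise $S$ is not the full zero set and one simply enlarges it); alternatively, one reads the statement as asserting that $\hat{\mathbb U}$ becomes primitive once $S$ is taken to exhaust all indices with $\mathcal A\tau(\mathbf x_j)=0$. In either reading, the primitivity follows directly from the definition of a primitive KKT point because the nonzero values of $\mathcal{A}\tau(\mathbf{x}_j)$ for $j\notin S$ are unchanged by column deletion.

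The main obstacle, such as it is, is purely bookkeeping: one must track the relabelling of indices carefully to ensure that when Proposition~\ref{prop:degenerate} is invoked $s$ times (rather than all at once), the multiplier reductions compose correctly to produce the block-diagonal structure required to deduce the KKT system of mLROTA($r-s$). Because Proposition~\ref{prop:degenerate} shows that the last column and row of the multiplier block vanish whenever the corresponding diagonal entry of $\Lambda$ is zero, iterated application successively zeros out all rows and columns indexed by $S$, leaving a symmetric multiplier for the reduced system. No new ideas beyond Proposition~\ref{prop:degenerate} are needed.
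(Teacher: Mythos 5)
Your proof is correct and follows the same route as the paper, which simply asserts the result by ``iteratively applying Proposition~\ref{prop:degenerate}''; your induction on $s$, together with the observation that $\mathcal A\tau(\mathbf x_j)$ for surviving indices is unaffected by column deletion, is exactly the bookkeeping the paper leaves implicit. Your remark that primitivity requires $S$ to exhaust the set of indices with $\mathcal A\tau(\mathbf x_j)=0$ is a fair reading of an implicit hypothesis in the statement.
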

}

It would happen that several KKT points of mLROTA(r) reduce in this way to the same primitive KKT point of mLROTA(r-s). {We call the set of such KKT points an \emph{essential KKT point}.} Therefore, there is a one to one correspondence between essential KKT points of mLROTA(r) and all primitive KKT points of mLROTA(s) for $1 \le s \le r$.

\subsection{Critical points are KKT points}\label{sec:critical-kkt}
In this section, we will establish the relation between KKT points of problem \eqref{eq:sota-max} and critical points of {$g$ on the manifold $U_{\mathbf n,r}$, which is the objective function defined in \eqref{eq:objective-p}.} To do this, we recall {from \eqref{eq:tangent-form}} that the gradient of $g$ at a point $(\mathbb U,\mathbf x)\in U_{\mathbf n,r}$ is given by
\begin{align}
\operatorname{grad}_{U^{(i)}}g(\mathbb U,\mathbf x)&=(I-\frac{1}{2}U^{(i)}(U^{(i)})^\tp )\big(V^{(i)}\Gamma-U^{(i)}(V^{(i)}\Gamma)^\tp  U^{(i)}\big),\label{eq:gradient-ui}\\
\operatorname{grad}_{\mathbf x}g(\mathbb U,\mathbf x)&=\mathbf x-\operatorname{Diag}\big(((U^{(1)})^\tp ,\dots,(U^{(k)})^\tp )\cdot\mathcal A\big),\label{eq:gradient-x}
\end{align}
where {$i=1,\dots, k$ and} $\Gamma=\operatorname{diag}(\mathbf x)$ is the diagonal matrix formed by the vector $\mathbf x$.

\begin{proposition}\label{prop:critical-equivalence}
A point $(\mathbb U,\mathbf x)\in U_{\mathbf n,r}$ is a critical point of $g$ defined in \eqref{eq:objective-p} if and only if $(\mathbb U,\operatorname{diag}(\mathbf x))$ is a KKT point of problem \eqref{eq:sota}.
\end{proposition}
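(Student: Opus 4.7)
The plan is to decouple the critical-point condition into its two block components (the Stiefel factors $U^{(i)}$ and the scalar vector $\mathbf x$), match each component against the corresponding portion of the KKT system \eqref{eq:kkt}, and then invoke Lemma~\ref{lem:kkt-equiv} to translate between KKT points of \eqref{eq:sota-max} and KKT points of \eqref{eq:sota}. Since $U_{\mathbf n,r}$ is a product manifold (Proposition~\ref{prop:tangent}), the Riemannian gradient splits as $\operatorname{grad} g = (\operatorname{grad}_{U^{(1)}}g,\dots,\operatorname{grad}_{U^{(k)}}g,\operatorname{grad}_{\mathbf x}g)$, and $(\mathbb U,\mathbf x)$ is a critical point if and only if each block vanishes.

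First I would use \eqref{eq:gradient-x}: $\operatorname{grad}_{\mathbf x}g=0$ forces $\mathbf x=\operatorname{Diag}\big(((U^{(1)})^\tp,\dots,(U^{(k)})^\tp)\cdot\mathcal A\big)$, i.e.\ $x_j=\lambda_j(\mathbb U)=\mathcal A\tau(\mathbf x_j)$ for every $j$. Consequently $\Gamma:=\operatorname{diag}(\mathbf x)$ coincides with the matrix $\Lambda$ from \eqref{eq:lambda-tensor}. Next, for each $i$, I observe that $g(\mathbb U,\mathbf x)$, viewed as a function of $U^{(i)}$ alone with the other variables frozen, has Euclidean gradient $\nabla_{U^{(i)}}g = U^{(i)}\Gamma^2-V^{(i)}\Gamma$ (a straightforward computation from the quadratic expansion of $\|\mathcal A - (U^{(1)},\dots,U^{(k)})\cdot\operatorname{diag}(\mathbf x)\|^2$). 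Applying Lemma~\ref{lem:grand} with $A=U^{(i)}$ and $f=g(\,\cdot\,)$ restricted to the Stiefel factor, the vanishing of the Stiefel projection \eqref{eq:gradient-ui} is equivalent to $\nabla_{U^{(i)}}g = U^{(i)}Q_i$ for some symmetric $Q_i\in\operatorname{S}^{r\times r}$. Since $U^{(i)}\Gamma^2 = U^{(i)}\Gamma^2$ already has the form $U^{(i)}(\text{symmetric})$, this reduces to $V^{(i)}\Gamma = U^{(i)}P_i$ with $P_i := Q_i - \Gamma^2$ symmetric.

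Combining the two reductions: $(\mathbb U,\mathbf x)$ is critical iff $\Gamma = \Lambda$ and $V^{(i)}\Lambda = U^{(i)}P_i$ with each $P_i$ symmetric for all $1\le i\le k$. This is precisely the KKT system \eqref{eq:kkt} for problem \eqref{eq:sota-max} at $\mathbb U$ together with the relation between $\mathbf x$ and the diagonal of the contracted tensor. By Lemma~\ref{lem:kkt-equiv}, this is in turn equivalent to $(\mathbb U,\operatorname{diag}(\mathbf x))$ being a KKT point of \eqref{eq:sota}, since $\operatorname{diag}(\mathbf x)=\operatorname{diag}\big(\operatorname{Diag}(((U^{(1)})^\tp,\dots,(U^{(k)})^\tp)\cdot\mathcal A)\big)$ under the $\mathbf x$-critical condition. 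The converse direction is obtained by reading the same chain of equivalences in reverse. Note that the hypothesis $\mathbf x\in\mathbb R^r_*$ built into $U_{\mathbf n,r}$ is not actually used in the equivalence argument; it simply guarantees that the critical-point analysis takes place on a genuine manifold as in Lemma~\ref{lem:localdiff}.

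I do not expect a substantive obstacle here: everything boils down to (i) separating the gradient by blocks, (ii) recognizing that the Stiefel-tangent projection formula reduces via Lemma~\ref{lem:grand} to a right-symmetric-factor condition, and (iii) identifying $\Gamma$ with $\Lambda$. The only point that needs care is making sure the $U^{(i)}\Gamma^2$ term, produced by the Euclidean gradient of the quadratic $\tfrac12\|\,\cdot\,\|^2$, is absorbed into the symmetric multiplier $P_i$ rather than being overlooked, and that the same redefinition is made on both sides of the equivalence.
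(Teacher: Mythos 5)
Your argument is correct and follows essentially the same route as the paper's proof: split the Riemannian gradient of $g$ into its Stiefel blocks and the $\mathbf x$-block, use \eqref{eq:gradient-x} to identify $\Gamma$ with $\Lambda$, use Lemma~\ref{lem:grand} to convert the vanishing of the projected gradient into the right-symmetric-factor condition $V^{(i)}\Lambda=U^{(i)}P_i$, and pass to problem \eqref{eq:sota} via the correspondence of Lemma~\ref{lem:kkt-equiv}. You are in fact slightly more careful than the paper in tracking the normal-direction term $U^{(i)}\Gamma^2$ of the Euclidean gradient and absorbing it into the symmetric multiplier, which the paper's proof silently discards.
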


\begin{proof}
We recall that a critical point $(\mathbb U,\mathbf x)\in U_{\mathbf n,r}$ of $g$ is defined by $\operatorname{grad}(g)(\mathbb U,\mathbf x)=0$. It follows from Proposition~\ref{prop:tangent} that these critical points are defined by
\[
\nabla_{U^{(i)}} g(\mathbb U,\mathbf x)=U^{(i)}P_i,\quad 1\le i \le r
\]
where $P_i$ is some $r\times r$ symmetric matrix and
\[
\nabla_{\mathbf x} g(\mathbb U,\mathbf x)=0.
\]
By \eqref{eq:gradient-x}, we have
\[
\mathbf x = \operatorname{Diag}\big(((U^{(1)})^\tp ,\dots,(U^{(k)})^\tp )\cdot\mathcal A\big),
\]
and according to \eqref{eq:gradient-ui}, we obtain
\[
\nabla_{U^{(i)}} g(\mathbb U,\mathbf x)=V^{(i)}\Lambda.
\]
Therefore, by \eqref{eq:kkt} a critical point of $g$ {on $U_{\mathbf n,r}$ must come from} a KKT point of problem \eqref{eq:sota}. The converse is obvious {and this completes the proof}.
\end{proof}

\begin{definition}[Nondegenerate KKT Point]\label{def:nondegenerate}
A KKT point $(\mathbb U,\Upsilon)$ of problem \eqref{eq:sota} is nondegenerate  if $(\mathbb U,\mathbf x)\in U_{\mathbf n,r}$ is a nondegenerate critical point of $g$ with $\operatorname{diag}(\mathbf x)=\Upsilon$.
\end{definition}

\begin{theorem}[Finite Essential Critical Points]\label{thm:kkt}
For a generic tensor, there are only finitely many essential KKT points for problem~\eqref{eq:sota-max}, {and for any positive integers $r > s > 0$,
a primitive KKT point of the problem mLROTA(s) corresponding to an essential KKT point  of the problem mLROTA(r) is nondegenerate.}
\end{theorem}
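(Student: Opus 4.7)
The strategy is to translate the statement, via the correspondences already established, into a claim about critical points of the objective function on the parameter manifold, and then exploit both Proposition~\ref{prop:nondegnerate} and the semi-algebraic structure of the critical set.

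The first step is a reduction. For each $1\le s\le r$, let $g_s$ denote the function in \eqref{eq:objective-p} with $r$ replaced by $s$, considered on $U_{\mathbf n,s}$. By Corollary~\ref{cor:essential-primitive}, essential KKT points of mLROTA($r$) are in one-to-one correspondence with primitive KKT points of mLROTA($s$) for $1\le s\le r$, so it suffices to prove that for each such $s$ the problem mLROTA($s$) admits only finitely many primitive KKT points and that each is nondegenerate. Combining Lemma~\ref{lem:kkt-equiv} and Proposition~\ref{prop:critical-equivalence}, primitive KKT points of mLROTA($s$) are precisely the images under projection of critical points of $g_s$ on $U_{\mathbf n,s}$: indeed, at any critical point the relation $\mathbf x_j=\mathcal A\tau(\mathbf x_j)=\lambda_j(\mathbb U)$ forced by \eqref{eq:gradient-x} pins down the $\mathbf x$-coordinates, and the primitivity condition $\mathcal A\tau(\mathbf x_j)\ne 0$ is equivalent to $(\mathbb U,\mathbf x)\in U_{\mathbf n,s}$.

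The second step is generic nondegeneracy. Proposition~\ref{prop:nondegnerate} supplies, for each $s$, a generic subset of tensors for which every critical point of $g_s$ on $U_{\mathbf n,s}$ is nondegenerate. Intersecting these finitely many generic sets over $s=1,\dots,r$ still yields a generic set of tensors, on which Definition~\ref{def:nondegenerate} gives nondegeneracy of all primitive KKT points of mLROTA($s$), $1\le s\le r$, simultaneously. This handles the nondegeneracy part of the theorem.

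For the finiteness part, fix such a generic $\mathcal A$. By Lemma~\ref{lem:isolated critical points}, each nondegenerate critical point of $g_s$ on $U_{\mathbf n,s}$ is isolated. Now observe that the critical set of $g_s$ on $U_{\mathbf n,s}$ is carved out inside $\mathbb R^{n_1\times s}\times\dots\times\mathbb R^{n_k\times s}\times\mathbb R^s$ by the polynomial equations $(U^{(i)})^\tp U^{(i)}=I$, the polynomial inequations $x_j\neq 0$, and the polynomial KKT relations extracted from \eqref{eq:gradient-ui}--\eqref{eq:gradient-x}; hence it is a semi-algebraic set. A fundamental theorem of semi-algebraic geometry (see, e.g., \cite{BCR-98}) states that any semi-algebraic set has only finitely many connected components. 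Since each of our critical points is isolated, every point forms its own connected component, forcing the critical set to be finite. Summing the resulting finite counts over $s=1,\dots,r$ and invoking Corollary~\ref{cor:essential-primitive} yields the finiteness of essential KKT points of mLROTA($r$).

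The main obstacle is not the nondegeneracy step, which is a direct application of Proposition~\ref{prop:nondegnerate}, but rather the passage from isolation to finiteness on the \emph{noncompact} manifold $U_{\mathbf n,s}$ (noncompact because of the $\mathbb R_*^s$ factor). Although the KKT relation $x_j=\lambda_j(\mathbb U)$ together with Cauchy--Schwarz yields the bound $|x_j|\le\|\mathcal A\|$, the critical set need not be closed in $U_{\mathbf n,s}$ since a sequence of primitive KKT points may \textit{a priori} accumulate on the locus where some $x_j=0$. The semi-algebraic finite-component theorem neatly circumvents this difficulty without requiring any compactification argument.
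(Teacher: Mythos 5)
Your proposal is correct and follows essentially the same route as the paper: reduce to primitive KKT points via Corollary~\ref{cor:essential-primitive}, identify them with critical points of $g$ on $U_{\mathbf n,s}$ via Proposition~\ref{prop:critical-equivalence}, and combine generic nondegeneracy (Proposition~\ref{prop:nondegnerate}) with isolation of nondegenerate critical points (Lemma~\ref{lem:isolated critical points}) to conclude. The only difference is in the last technical step: where you derive finiteness from the finite number of connected components of the semi-algebraic critical set, the paper observes that the critical set is an algebraic variety with finitely many irreducible components, each of which must be a singleton because an infinite irreducible component would have positive dimension and hence contain non-isolated critical points.
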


\begin{proof}
{To prove the finiteness of essential KKT points, it is sufficient to show that there are only finitely many primitive KKT points on $U_{\mathbf n,r}$, and the finiteness follows from the layer structure of the set $V_{\mathbf n,r}$ (or equivalently  $C(\mathbf n,r)$, cf.\ Proposition~\ref{prop:smooth}) and Proposition~\ref{prop:degenerate}.} We first recall that KKT points on $U_{\mathbf n,r}$ are defined by \eqref{eq:kkt}, which is a system of polynomial equations, which implies that the set $K_{\mathbf{n},r}$ of KKT points of problem \eqref{eq:sota-max} on $U_{\mathbf n,r}$ is a closed subvariety of the quasi-variety $U_{\mathbf{n},r}$. We also note that there are finitely many irreducible components of $K_{\mathbf n,r}$ \cite{H-77} and hence it suffices to prove that each irreducible component of $K_{\mathbf{n},r}$ is a singleton. Now let $Z\subseteq K_{\mathbf{n},r}$ be an irreducible component of $K_{\mathbf{n},r}$. If $Z$ contains infinitely many points, then $\dim Z \ge 1$ \cite{H-77}. {However, each point in $Z$ determines a critical point of the function $g$ defined in \eqref{eq:objective-p} on the manifold $U_{\mathbf{n},r}$ (cf.\ Proposition~\ref{prop:critical-equivalence}). This implies that the set of critical points of $g$ on $U_{\mathbf{n},r}$ has a positive dimension, which contradicts Lemma~\ref{lem:isolated critical points} and Proposition~\ref{prop:nondegnerate}}.

Next, by Corollary~\ref{cor:essential-primitive}, given a non-primitive KKT point $\mathbb U$ of the problem mLROTA(r), we can get a primitive KKT point $\hat{\mathbb U}$ of problem mLROTA(s) with $s < r$
{and hence we have} $(\hat{\mathbb U},\mathbf x)\in U_{\mathbf n,s}$ where $\mathbf x$ is determined by $\lambda_j(\hat{\mathbb U})$'s. Since for a generic tensor the function $g$ has only nondegenerate critical points on $U_{\mathbf n,s}$ by Proposition~\ref{prop:nondegnerate}, the second assertion follows from Proposition~\ref{prop:critical-equivalence} and Corollary~\ref{cor:essential-primitive}.
\end{proof}

{For simplicity, we abbreviate $\nabla_{U^{(i)}} f(\mathbb U)$ as $\nabla_i f(\mathbb U)$ for each $1 \le i \le k$.
We define
\[
\|\mathbb U\|_F^2 \coloneqq \sum_{i=1}^k\|U^{(i)}\|_F^2.
\]
The following result is crucial to the linear convergence analysis in the sequel.}
\begin{lemma}[\L ojasiwicz's Inequality]\label{lem:gradient}
If $(\mathbb U^*,\Upsilon^*)$ is a nondegenerate KKT point of problem \eqref{eq:sota}, then there exist $\kappa>0$ and $\epsilon>0$ such that
\begin{equation}
\sum_{i=1}^k\|\nabla_i f(\mathbb U)-U^{(i)}\nabla_i (f(\mathbb U))^\tp U^{(i)}\|_F^2\geq \kappa |f(\mathbb U)-f(\mathbb U^*)|
\end{equation}
for any $\|\mathbb U-\mathbb U^*\|_F\leq\epsilon$. {Here $f$ is the objective function of problem \eqref{eq:sota-max} defined by \eqref{eq:objective}.}
\end{lemma}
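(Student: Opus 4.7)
\emph{Plan.} I will apply the manifold {\L}ojasiewicz inequality (Proposition~\ref{prop:lojasiewicz}) to the function $g$ of \eqref{eq:objective-p} at the nondegenerate critical point $(\mathbb U^\ast,\mathbf x^\ast)\in U_{\mathbf n,r}$ (with $\operatorname{diag}(\mathbf x^\ast)=\Upsilon^\ast$) supplied by Definition~\ref{def:nondegenerate}, and then transfer the resulting bound to $f$ by restricting to the smooth section $\mathbb U\mapsto (\mathbb U,\lambda(\mathbb U))$, where $\lambda(\mathbb U)\coloneqq (\lambda_1(\mathbb U),\dots,\lambda_r(\mathbb U))$ is defined via \eqref{eq:lambda}. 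Proposition~\ref{prop:lojasiewicz} furnishes a neighborhood $W\subseteq U_{\mathbf n,r}$ of $(\mathbb U^\ast,\mathbf x^\ast)$ and a constant $\kappa_0>0$ with $\|\operatorname{grad}(g)(\mathbb U,\mathbf x)\|^2\geq \kappa_0|g(\mathbb U,\mathbf x)-g(\mathbb U^\ast,\mathbf x^\ast)|$ for every $(\mathbb U,\mathbf x)\in W$.

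Next, since \eqref{eq:gradient-x} at the critical point forces $\mathbf x^\ast=\lambda(\mathbb U^\ast)$, continuity of $\lambda$ produces an $\epsilon>0$ with $(\mathbb U,\lambda(\mathbb U))\in W$ whenever $\|\mathbb U-\mathbb U^\ast\|_F\leq \epsilon$. A direct expansion exploiting the orthonormality of the $U^{(i)}$'s yields the identity
\[
g(\mathbb U,\lambda(\mathbb U))=\tfrac12\|\mathcal A\|^2-\tfrac12 f(\mathbb U),
\]
so $|g(\mathbb U,\lambda(\mathbb U))-g(\mathbb U^\ast,\mathbf x^\ast)|=\tfrac12|f(\mathbb U)-f(\mathbb U^\ast)|$. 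Along the same section \eqref{eq:gradient-x} gives $\operatorname{grad}_{\mathbf x}g=\mathbf 0$, and a direct differentiation of $f(\mathbb U)=\sum_{j}\lambda_j(\mathbb U)^2$ using $\partial\lambda_j/\partial\mathbf u^{(i)}_l=\delta_{jl}\mathbf v^{(i)}_j$ produces the gradient identity $\nabla_i f(\mathbb U)=2V^{(i)}\Lambda$. Substituting into \eqref{eq:gradient-ui} then gives
\[
\operatorname{grad}_{U^{(i)}}g(\mathbb U,\lambda(\mathbb U))=\tfrac12\bigl(I-\tfrac12 U^{(i)}(U^{(i)})^{\tp}\bigr)\bigl(\nabla_i f(\mathbb U)-U^{(i)}(\nabla_i f(\mathbb U))^{\tp}U^{(i)}\bigr).
\]

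Finally, since $U^{(i)}$ has orthonormal columns, the factor $I-\tfrac12 U^{(i)}(U^{(i)})^{\tp}$ has operator norm $1$, whence $\|\operatorname{grad}_{U^{(i)}}g(\mathbb U,\lambda(\mathbb U))\|_F^2\leq \tfrac14\|\nabla_i f(\mathbb U)-U^{(i)}(\nabla_i f(\mathbb U))^{\tp}U^{(i)}\|_F^2$. Summing over $i=1,\dots,k$ and chaining with the {\L}ojasiewicz bound and the energy identity yields
\[
\sum_{i=1}^{k}\|\nabla_i f(\mathbb U)-U^{(i)}(\nabla_i f(\mathbb U))^{\tp}U^{(i)}\|_F^2\geq 4\|\operatorname{grad}(g)(\mathbb U,\lambda(\mathbb U))\|^2\geq 2\kappa_0|f(\mathbb U)-f(\mathbb U^\ast)|,
\]
which is the desired inequality with $\kappa=2\kappa_0$. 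The main obstacle is conceptual rather than computational: one must recognize that the natural Riemannian gradient of $g$ restricted to the section $\mathbb U\mapsto (\mathbb U,\lambda(\mathbb U))$ is precisely the tangentially projected Euclidean gradient of $f$ on the Stiefel product, up to the harmless prefactor $\tfrac12\bigl(I-\tfrac12 U^{(i)}(U^{(i)})^{\tp}\bigr)$; once this link is forged, Proposition~\ref{prop:lojasiewicz} does all the heavy lifting and everything else is bookkeeping with \eqref{eq:gradient-ui}--\eqref{eq:gradient-x}.
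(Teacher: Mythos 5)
Your proposal is correct and follows essentially the same route as the paper: apply the manifold {\L}ojasiewicz inequality (Proposition~\ref{prop:lojasiewicz}) to $g$ at the nondegenerate critical point $(\mathbb U^*,\mathbf x^*)$, restrict to the section $\mathbf x=\lambda(\mathbb U)$ so that $\operatorname{grad}_{\mathbf x}g=\mathbf 0$ and $|g-g^*|=\tfrac12|f-f^*|$, and then absorb the bounded prefactor $I-\tfrac12 U^{(i)}(U^{(i)})^{\tp}$ relating $\operatorname{grad}_{U^{(i)}}g$ to the projected gradient of $f$. The only (harmless) differences are that you track the factor $2$ in $\nabla_i f=2V^{(i)}\Lambda$ explicitly and use the operator-norm bound on the prefactor rather than a uniform Frobenius bound.
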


\begin{proof}
Let $\delta>0$ be the radius of the neighborhood given by Proposition~\ref{prop:lojasiewicz}.
Since $(\mathbb U^*,\Upsilon^*)$ is a KKT point of \eqref{eq:sota}, we have
\[
\operatorname{Diag}(\Upsilon^*)=\operatorname{Diag}\big(((U^{(*,1)})^\tp ,\dots,(U^{(*,k)})^\tp )\cdot\mathcal A\big).
\]

For a given $\mathbb U$, let $\Upsilon$ be defined as
\[
\operatorname{Diag}(\Upsilon)=\operatorname{Diag}\big(((U^{(1)})^\tp ,\dots,(U^{(k)})^\tp )\cdot\mathcal A\big).
\]
Then, there exists $\epsilon>0$ such that
\begin{equation}\label{eq:upsilon}
\|(\mathbb U,\Upsilon)-(\mathbb U^*,\Upsilon^*)\|_F\leq \delta
\end{equation}
whenever $\|\mathbb U-\mathbb U^*\|_F\leq \epsilon$.

By Proposition~\ref{prop:critical-equivalence}, Proposition~\ref{prop:lojasiewicz} is applicable to $(\mathbb U^*,\mathbf x^*)\in U_{\mathbf n,r}$ for the function $g$.
Thus, there exists $\kappa_0>0$ such that
\[
\|\operatorname{grad}(g)(\mathbb U,\mathbf x)\|^2\geq \kappa_0 |g(\mathbb U,\mathbf x)-g(\mathbb U^*,\mathbf x^*)|
\]
for all $\|\mathbb U-\mathbb U^*\|_F\leq \epsilon$, where $\mathbf x=\operatorname{Diag}(\Upsilon)$ is formed by the diagonal elements of $\Upsilon$.

We first have
\[
|g(\mathbb U,\mathbf x)-g(\mathbb U^*,\mathbf x^*)|= {\frac{1}{2}} |f(\mathbb U)-f(\mathbb U^*)|,
\]
since $f(\mathbb U)=\|\mathbf x\|^2$ and
\begin{align*}
g(\mathbb U,\mathbf x)&=\frac{1}{2}\|\mathcal A-(U^{(1)},\dots,U^{(k)})\cdot\operatorname{diag}(\mathbf x)\|^2\\
&=\frac{1}{2}\|\mathcal A\|^2-\langle \mathcal A,(U^{(1)},\dots,U^{(k)})\cdot\operatorname{diag}(\mathbf x)\rangle+\frac{1}{2}\|\mathbf x\|^2\\
&=\frac{1}{2}\|\mathcal A\|^2-\langle\operatorname{Diag}{\big(((U^{(1)})^\tp ,\dots,(U^{(k)})^\tp )}\cdot\mathcal A\big),\mathbf x\rangle+\frac{1}{2}\|\mathbf x\|^2\\
&=\frac{1}{2}(\|\mathcal A\|^2-\|\mathbf x\|^2).
\end{align*}
By {\eqref{eq:gradient-x}} and the definition of $\mathbf x$, we also have
\[
\operatorname{grad}_{\mathbf x}g(\mathbb U,\mathbf x)=\mathbf x-\operatorname{Diag}\big(((U^{(1)})^\tp ,\dots,(U^{(k)})^\tp )\cdot\mathcal A\big)=\mathbf 0.
\]
Since
\[
\operatorname{grad}_{U^{(i)}}g(\mathbb U,\mathbf x)=(I-\frac{1}{2}U^{(i)}(U^{(i)})^\tp )(V^{(i)}\Gamma-U^{(i)}(V^{(i)}\Gamma)^\tp  U^{(i)}),\ \text{for all }i=1,\dots,k,
\]
and
\[
\nabla_i f(\mathbb U)=V^{(i)}\Gamma,\ \text{for all }i=1,\dots,k,
\]
where $\Gamma=\operatorname{diag}(\mathbf x)$ is the diagonal matrix formed by the vector $\mathbf x$, {the assertion will follow} if we can show that
\[
\|I-\frac{1}{2}U^{(i)}(U^{(i)})^\tp \|_F\leq \kappa_1
\]
is uniformly bounded by $\kappa_1>0$ over $\|\mathbb U-\mathbb U^*\|_F\leq \epsilon$. This is obviously true. The proof is then complete.
\end{proof}

\section{iAPD Algorithm and Convergence Analysis}\label{sec:APPDT}

\subsection{Description of iAPD algorithm}\label{sec:algorithm}
In \cite{GC-19}, an alternating polar decomposition (APD) algorithm is proposed to solve the optimization problem \eqref{eq:sota-max}. {Algorithm~\ref{algo} below is an improved version of the classical APD, which we call  iAPD.} It is an alternating polar decomposition method with adaptive proximal corrections and truncations. An iteration step in iAPD with a truncation is called a \textit{truncation iteration}. Obviously, there are at most $r$ truncation iterations.

\begin{algorithm}
{iAPD: Low Rank Orthogonal Tensor Approximation}\label{algo}\\
\begin{boxedminipage}{1\textwidth}
Input: a nonzero tensor $\mathcal A\in\mathbb R^{n_1}\otimes\dots\otimes\mathbb R^{n_k}$, a positive integer $r$, and a proximal parameter $\epsilon$.
\begin{algorithmic}
\STATE{\textbf{Step 0} [Initialization]: choose $\mathbb U_{[0]}:=(U^{(1)}_{[0]},\dots,U^{(k)}_{[0]})\in V(r,n_1)\times\dots\times V(r,n_k)$ such that
$f(\mathbb U_{[0]})>0$, and a truncation parameter $\kappa\in(0,\sqrt{f(\mathbb U_{[0]})/r})$. Let $p:=1$.}

\medskip

\STATE{\textbf{Step 1} [Alternating Polar Decompositions-APD]:
Let $i:=1$.  }

\medskip

\STATE{Substep 1 [Polar Decomposition]: If $i>k$, go to Step 2. Otherwise, for all $j=1,\dots,r$, let
\begin{equation}\label{eq:inter-vector}
\mathbf x^{i}_{j,[p]}:=(\mathbf u^{(1)}_{j,[p]},\dots,\mathbf u^{(i-1)}_{j,[p]}, \mathbf u^{(i)}_{j,[p-1]}, \mathbf u^{(i+1)}_{j,[p-1]},\dots,\mathbf u^{(k)}_{j,[p-1]}),
\end{equation}
where $\mathbf u^{(i)}_{j,[p]}$ is the {$j$-th} column of the factor matrix $U^{(i)}_{[p]}$.

Compute the matrix $\Lambda^{(i)}_{[p]}$ as
\begin{equation}\label{eq:lambda-orth}
\Lambda^{(i)}_{[p]}:=\operatorname{diag}(\lambda^{i-1}_{1,[p]},\dots,\lambda^{i-1}_{r,[p]})\ \text{with }\lambda^{i-1}_{j,[p]}:=\mathcal A\tau(\mathbf x^{i}_{j,[p]})\ \text{for }j=1,\dots,r,
\end{equation}
and the matrix $V^{(i)}_{[p]}$ as
\begin{equation}\label{eq:marixv}
V^{(i)}_{[p]}:=\begin{bmatrix}\mathbf v^{(i)}_{1,[p]}&\dots&\mathbf v^{(i)}_{r,[p]}\end{bmatrix}\ \text{with } \mathbf v^{(i)}_{j,[p]}:=\mathcal A\tau_i(\mathbf x^{i}_{j,[p]})\ \text{for }j=1,\dots,r.
\end{equation}

Compute the singular value decomposition of the matrix $V^{(i)}_{[p]}\Lambda^{(i)}_{[p]}$ as
\begin{equation}\label{eq:svd}
V^{(i)}_{[p]}\Lambda^{(i)}_{[p]} =G^{(i)}_{[p]}\Sigma^{(i)}_{[p]}(H^{(i)}_{[p]})^\tp ,\ G^{(i)}_{[p]}\in V(r,n_i)\ \text{and }H^{(i)}_{[p]}\in \O(r),
\end{equation}
where the singular values $\sigma^{(i)}_{j,[p]}$'s are ordered nonincreasingly in the diagonal matrix $\Sigma^{(i)}_{[p]}$. Then the polar decomposition of the matrix $V^{(i)}_{[p]}\Lambda^{(i)}_{[p]}$ is
\begin{equation}\label{eq:polar}
V^{(i)}_{[p]}\Lambda^{(i)}_{[p]}=U^{(i)}_{[p]}S^{(i)}_{[p]}\ \text{with }U^{(i)}_{[p]}:=G^{(i)}_{[p]}(H^{(i)}_{[p]})^\tp,\ {S^{(i)}_{[p]} \coloneqq H^{(i)}_{[p]} \Sigma^{(i)}_{[p]} (H^{(i)}_{[p]})^\tp.}
\end{equation}
}

\medskip

\STATE{Substep 2 [Proximal Correction]: If $\sigma^{(i)}_{r,[p]}<\epsilon$, then compute the polar decomposition of the matrix $V^{(i)}_{[p]}\Lambda^{(i)}_{[p]}+\epsilon U^{(i)}_{[p-1]}$ as
\begin{equation}\label{eq:proximal-polar}
V^{(i)}_{[p]}\Lambda^{(i)}_{[p]}+\epsilon U^{(i)}_{[p-1]}=\hat U^{(i)}_{[p]}\hat S^{(i)}_{[p]}
\end{equation}
for an orthonormal matrix $\hat U^{(i)}_{[p]}\in V(r,n_i)$ and a symmetric positive semidefinite matrix $\hat S^{(i)}_{[p]}$.
Update $U^{(i)}_{[p]}:=\hat U^{(i)}_{[p]}$, and $S^{(i)}_{[p]}:=\hat S^{(i)}_{[p]}$. Set $i:=i+1$ and go to Substep 1.}

\medskip

\STATE{\textbf{Step 2} [Truncation]: If $\lambda^0_{j,[p+1]}=\big((U^{(i)}_{[p]})^\tp V^{(i)}_{[p]}\big)_{jj}<\kappa$ for {some $j\in J\subseteq\{1,\dots,r\}$}, replace the matrices $U^{(i)}_{[p]}$'s by $\hat U^{(i)}_{[p]}$'s, where $\hat U^{(i)}_{[p]}$ is an $n_i\times (r-|J|)$ matrix formed by the columns of $U^{(i)}_{[p]}$ corresponding to $\{1,\dots,r\}\setminus J$, for all $i\in\{1,\dots,k\}$. Update $r:=r-|J|$, and $U^{(i)}_{[p]}:=\hat U^{(i)}_{[p]}$ for all $i\in\{1,\dots,k\}$. Go to Step 3.}

\medskip
\STATE{\textbf{Step 3}: Unless a termination criterion is satisfied, let $p:=p+1$ and go back to Step 1.}
\end{algorithmic}
\end{boxedminipage}
\end{algorithm}
\subsection{Properties of iAPD}\label{sec:properties}
In this section, we derive some inequalities for the increments of the objective function during iterations. {To do this, we define}
\begin{equation}\label{eq:variable-u}
\mathbb U_{i,[p]} \coloneqq (U^{(1)}_{[p]},\dots,U^{(i)}_{[p]},U^{(i+1)}_{[p-1]},\dots,U^{(k)}_{[p-1]})
\end{equation}
for each $1\le i \le k$, and
\begin{equation}\label{eq:whole-u}
\mathbb U_{[p]} \coloneqq (U^{(1)}_{[p]},\dots,U^{(k)}_{[p]}),
\end{equation}
{which is equal to $\mathbb U_{k,[p]} =\mathbb U_{0,[p+1]}$} for each $p \in \mathbb{N}$. We remark that the $j$-th column of a factor matrix $U^{(i)}_{[p]}$ is denoted by $\mathbf u^{(i)}_{j,[p]}$ for each $1\le j \le r$ while the superscript $i$ for the \textit{block vector} $\mathbf x^{i}_{j,[p]}$ is not bracketed. For each $1\le j \le r$ and $p\in \mathbb{N}$, {we also denote
\begin{equation}\label{eq:lambda-notation}
\lambda^k_{j,[p-1]} = \lambda^0_{j,[p]},
\end{equation}
where $\lambda^{i-1}_{j,[p]}$ is defined in \eqref{eq:lambda-orth} for the $i$-th iteration.}
One immediate observation is that if the {$p$-th iteration in Algorithm~\ref{algo}} is not a truncation iteration, then the sizes of the matrices in $\mathbb U_{[p]}$ and those in $\mathbb U_{[p-1]}$ are the same. Also if the number of iterations {in Algorithm~\ref{algo}} is infinite, then there is a sufficiently large $N_0$ such that the {$p$-th iteration} is not a truncation iteration for any $p\geq N_0$. The proof of the next lemma can be found in Appendix~\ref{app:proofalgo-lambadk}.

\begin{lemma}[Monotonicity of iAPD]\label{lem:lambda-k}
If the $p$-th iteration in Algorithm~\ref{algo} is not a truncation iteration, then for each $0\le i \le k-1$, we have
\begin{equation}\label{eq:obj-k}
f(\mathbb U_{i+1,[p]})-f(\mathbb U_{i,[p]})\geq \frac{\epsilon}{2}\|U^{(i+1)}_{[p]}-U^{(i+1)}_{[p-1]}\|_F^2.
\end{equation}
\end{lemma}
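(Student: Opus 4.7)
My plan is to exploit the variational characterization of the polar decomposition at the $(i+1)$-th substep of the $p$-th iteration. For any $M \in \mathbb{R}^{n \times r}$ with polar decomposition $M = UP$, the orthonormal factor $U$ maximizes $\langle Q, M\rangle$ over $Q \in V(r,n)$; moreover, if $\sigma_r(M) \geq \epsilon$, then diagonalizing the symmetric factor $P$ and using that the diagonal entries of $Q^\tp U$ lie in $[-1,1]$ gives the quadratic refinement
\[
\langle U - Q, M\rangle \;\geq\; \epsilon \bigl(r - \langle Q, U\rangle\bigr) \;=\; \tfrac{\epsilon}{2}\|U - Q\|_F^2 \quad \text{for every } Q \in V(r,n),
\]
where the last equality uses $\|A - B\|_F^2 = 2r - 2\langle A, B\rangle$ for $A,B \in V(r,n)$. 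I will apply this with $Q = U^{(i+1)}_{[p-1]}$.

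The next step is bookkeeping. Since $\mathbb U_{i,[p]}$ and $\mathbb U_{i+1,[p]}$ differ only in the $(i+1)$-th factor (replacing $U^{(i+1)}_{[p-1]}$ by $U^{(i+1)}_{[p]}$), I set $a_j := \lambda^{i}_{j,[p]}$ and $b_j := \lambda^{i+1}_{j,[p]}$, so that $f(\mathbb U_{i,[p]}) = \sum_{j} a_j^2$ and $f(\mathbb U_{i+1,[p]}) = \sum_{j} b_j^2$. Unpacking the definitions of $\tau_{i+1}$, $V^{(i+1)}_{[p]}$ and $\Lambda^{(i+1)}_{[p]}$ by comparing diagonal entries of $(U^{(i+1)}_{[\cdot]})^\tp V^{(i+1)}_{[p]}$ gives
\[
\langle U^{(i+1)}_{[p]},\, V^{(i+1)}_{[p]}\Lambda^{(i+1)}_{[p]}\rangle = \sum_{j} a_j b_j, \qquad \langle U^{(i+1)}_{[p-1]},\, V^{(i+1)}_{[p]}\Lambda^{(i+1)}_{[p]}\rangle = \sum_{j} a_j^2.
\]
The inequality from the first paragraph then splits into two cases. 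In the no-proximal branch, $U^{(i+1)}_{[p]}$ is the polar factor of $V^{(i+1)}_{[p]}\Lambda^{(i+1)}_{[p]}$ itself, and the fact that Substep 2 of Algorithm~\ref{algo} did not fire forces $\sigma^{(i+1)}_{r,[p]} \geq \epsilon$, which is exactly the hypothesis of the quadratic refinement. In the proximal branch, $U^{(i+1)}_{[p]}$ is the polar factor of $V^{(i+1)}_{[p]}\Lambda^{(i+1)}_{[p]} + \epsilon U^{(i+1)}_{[p-1]}$, and testing $Q = U^{(i+1)}_{[p-1]}$ against its maximization property (the $\epsilon$ now coming from the proximal term rather than from a singular-value gap) yields the same bound
\[
\sum_{j} a_j b_j - \sum_{j} a_j^2 \;\geq\; \tfrac{\epsilon}{2}\bigl\|U^{(i+1)}_{[p]} - U^{(i+1)}_{[p-1]}\bigr\|_F^2.
\]

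To finish, I invoke the elementary identity
\[
\sum_{j} b_j^2 - \sum_{j} a_j^2 \;=\; \sum_{j} (b_j - a_j)^2 + 2\Bigl(\sum_{j} a_j b_j - \sum_{j} a_j^2\Bigr) \;\geq\; 2\Bigl(\sum_{j} a_j b_j - \sum_{j} a_j^2\Bigr),
\]
which combined with the previous display gives $f(\mathbb U_{i+1,[p]}) - f(\mathbb U_{i,[p]}) \geq \epsilon \|U^{(i+1)}_{[p]} - U^{(i+1)}_{[p-1]}\|_F^2$, comfortably exceeding the stated bound. The main obstacle is the no-proximal case: one must upgrade the coarse statement ``$U^{(i+1)}_{[p]}$ is a polar factor'' to a \emph{quadratic} lower bound on the objective increment. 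The threshold $\sigma_r < \epsilon$ in Substep 2 is designed precisely so that whenever the proximal correction is skipped, the smallest singular value is already at least $\epsilon$, which is the quantity needed to convert the usual linear optimality gap of the polar decomposition into the required quadratic one; the proximal branch itself is a one-line consequence of the defining maximization property.
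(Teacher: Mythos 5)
Your proof is correct, and its first two stages coincide with the paper's: the identification $\langle U^{(i+1)}_{[p]},V^{(i+1)}_{[p]}\Lambda^{(i+1)}_{[p]}\rangle=\sum_j a_jb_j$ and $\langle U^{(i+1)}_{[p-1]},V^{(i+1)}_{[p]}\Lambda^{(i+1)}_{[p]}\rangle=\sum_j a_j^2$, the two-case split according to whether the proximal correction fires, and the quadratic lower bound $\sum_j a_j(b_j-a_j)\geq\frac{\epsilon}{2}\|U^{(i+1)}_{[p]}-U^{(i+1)}_{[p-1]}\|_F^2$ (the paper packages your ``quadratic refinement'' as Lemma~\ref{lem:polar-error-full} plus $\sigma_{\min}(S^{(i+1)}_{[p]})\geq\epsilon$, writing the gap as $\frac{1}{2}\|(U^{(i+1)}_{[p]}-U^{(i+1)}_{[p-1]})\sqrt{S^{(i+1)}_{[p]}}\|_F^2$, and treats the proximal branch exactly as you do). Where you genuinely diverge is the final combination step. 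The paper factors $\sum_j b_j^2-\sum_j a_j^2=\sum_j b_j(b_j-a_j)+\sum_j a_j(b_j-a_j)$ and must then prove $\sum_j b_j(b_j-a_j)\geq 0$ separately; it does so via Cauchy--Schwarz after dividing by $\sum_j a_jb_j$, whose strict positivity in turn requires an induction along the iterations resting on $f(\mathbb U_{[0]})>0$ and the truncation budget $r\kappa^2<f(\mathbb U_{[0]})$. Your identity $\sum_j b_j^2-\sum_j a_j^2=\sum_j(b_j-a_j)^2+2\sum_j a_j(b_j-a_j)$ makes the extra term manifestly nonnegative, bypassing the positivity induction entirely and even yielding the stronger constant $\epsilon$ in place of $\epsilon/2$. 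What the paper's longer route buys is the byproduct $\sum_j a_jb_j>0$ and the explicit monotonicity bookkeeping across truncation iterations, which are convenient elsewhere (e.g.\ in Proposition~\ref{prop:positive}); for the lemma as stated, your argument is shorter and strictly sharper.
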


\begin{proposition}[Sufficient Decrease]\label{prop:subfficient}
{If the $p$-th iteration in Algorithm~\ref{algo} is not a truncation iteration, then we have}
\begin{equation}\label{eq:obj-suf}
f(\mathbb U_{[p]})-f(\mathbb U_{[p-1]})\geq \frac{\epsilon}{2}\|\mathbb U_{[p]}-\mathbb U_{[p-1]}\|_F^2.
\end{equation}
\end{proposition}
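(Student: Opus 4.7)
The plan is essentially a telescoping argument built directly on Lemma~\ref{lem:lambda-k}. First I would note that by the definitions in \eqref{eq:variable-u} and \eqref{eq:whole-u}, together with the identification $\mathbb U_{k,[p-1]} = \mathbb U_{0,[p]} = \mathbb U_{[p-1]}$ and $\mathbb U_{k,[p]} = \mathbb U_{[p]}$, the chain of intermediate configurations
\[
\mathbb U_{[p-1]} = \mathbb U_{0,[p]},\ \mathbb U_{1,[p]},\ \ldots,\ \mathbb U_{k,[p]} = \mathbb U_{[p]}
\]
interpolates between the two successive outer iterates by updating one block of variables at a time. This is exactly the situation that Lemma~\ref{lem:lambda-k} describes, since the $p$-th iteration is assumed not to be a truncation iteration (so all intermediate sizes match and the lemma applies uniformly for $i = 0,\ldots,k-1$).

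Next I would simply telescope the bound \eqref{eq:obj-k} over $i = 0, \ldots, k-1$:
\begin{align*}
f(\mathbb U_{[p]}) - f(\mathbb U_{[p-1]})
&= \sum_{i=0}^{k-1} \bigl(f(\mathbb U_{i+1,[p]}) - f(\mathbb U_{i,[p]})\bigr) \\
&\geq \frac{\epsilon}{2} \sum_{i=0}^{k-1} \bigl\|U^{(i+1)}_{[p]} - U^{(i+1)}_{[p-1]}\bigr\|_F^2
= \frac{\epsilon}{2}\,\|\mathbb U_{[p]} - \mathbb U_{[p-1]}\|_F^2,
\end{align*}
where the final equality uses the blockwise definition of the Frobenius-type norm $\|\mathbb U\|_F^2 = \sum_{i=1}^{k} \|U^{(i)}\|_F^2$ on the product of Stiefel manifolds. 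This is exactly \eqref{eq:obj-suf}.

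There is no real obstacle here: the entire content of the proposition is packaged inside Lemma~\ref{lem:lambda-k} (whose proof is deferred to the appendix), and the step from the per-block decrease to the global sufficient decrease is a one-line telescoping plus the additive definition of the norm on the product space. The only mild point to verify is that the non-truncation assumption on the $p$-th iteration ensures that each $U^{(i)}_{[p]}$ has the same shape as $U^{(i)}_{[p-1]}$, so that the differences $U^{(i+1)}_{[p]} - U^{(i+1)}_{[p-1]}$ and the assembled increment $\mathbb U_{[p]} - \mathbb U_{[p-1]}$ are well defined and compatible with the blockwise norm identity.
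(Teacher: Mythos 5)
Your proposal is correct and is exactly the paper's own proof: telescope the per-block bound \eqref{eq:obj-k} from Lemma~\ref{lem:lambda-k} over $i=0,\dots,k-1$ and use the additive definition of $\|\cdot\|_F$ on the product space. Nothing is missing.
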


\begin{proof}
{We have
\[
f(\mathbb U_{[p]})-f(\mathbb U_{[p-1]})=\sum_{i=0}^{k-1}\big(f(\mathbb U_{i+1,[p]})-f(\mathbb U_{i,[p]})\big)\geq \frac{\epsilon}{2}\sum_{i=0}^{k-1}\|U^{(i+1)}_{[p]}-U^{(i+1)}_{[p-1]}\|_F^2=\frac{\epsilon}{2}\|\mathbb U_{[p]}-\mathbb U_{[p-1]}\|_F^2,
\]
where the inequality follows from \eqref{eq:obj-k} in Lemma~\ref{lem:lambda-k}.}
\end{proof}

At each truncation iteration, the number of columns of the matrices in $\mathbb U_{[p]}$ is decreased strictly. The first issue we have to address is that the iteration $\mathbb U_{[p]}$ is not vacuous, i.e., the numbers of the columns of the matrices in $\mathbb U_{[p]}$ {are stable and positive}. We have the following proposition, which is recorded for latter reference.
\begin{proposition}\label{prop:positive}
The number of columns of $U^{(i)}_{[p]}$'s will be stable at { a positive integer $s\leq r$} 
and there exists $N_0$ such that $f(\mathbb U_{[p]})$ is nondecreasing for all $p\geq N_0$.
\end{proposition}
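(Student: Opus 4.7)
The plan is to combine two observations: the sequence $\{r_p\}$ of column numbers of $U^{(i)}_{[p]}$ is a nonincreasing sequence of nonnegative integers, and hence stabilizes after finitely many steps; once it stabilizes at some value $s$, no further truncation iterations occur, so Proposition~\ref{prop:subfficient} immediately yields the required monotonicity of $f(\mathbb U_{[p]})$. The only nontrivial point is to rule out the possibility that $r_p$ stabilizes at $0$, i.e., that the algorithm truncates all columns.

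To rule this out, I would bound $f(\mathbb U_{[p]})$ from below uniformly in $p$. At a non-truncation iteration, Proposition~\ref{prop:subfficient} gives $f(\mathbb U_{[p]}) \geq f(\mathbb U_{[p-1]})$. At a truncation iteration, I would first argue that the APD sweep in Step 1 produces an intermediate configuration $\tilde{\mathbb U}$ (still with $r_{p-1}$ columns) satisfying $f(\tilde{\mathbb U}) \geq f(\mathbb U_{[p-1]})$ by the very same argument used in Lemma~\ref{lem:lambda-k}, since the size of the factor matrices is unchanged during the APD sweep itself. The subsequent truncation of Step 2 removes exactly those $|J_p|$ columns whose contributions $(\lambda^0_{j,[p+1]})^2$ to the objective are bounded by $\kappa^2$, so
\[
f(\mathbb U_{[p]}) = f(\tilde{\mathbb U}) - \sum_{j \in J_p} (\lambda^0_{j,[p+1]})^2 \geq f(\mathbb U_{[p-1]}) - |J_p|\kappa^2.
\]

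Summing telescopically and using $\sum_{q \leq p} |J_q| = r - r_p \leq r$, I obtain
\[
f(\mathbb U_{[p]}) \geq f(\mathbb U_{[0]}) - r\kappa^2 > 0,
\]
where the strict positivity follows from the initialization constraint $\kappa < \sqrt{f(\mathbb U_{[0]})/r}$. If ever $r_p = 0$, then $f(\mathbb U_{[p]}) = 0$ as an empty sum, contradicting the above. Hence $r_p \geq 1$ for all $p$, and since $\{r_p\}$ is nonincreasing and bounded, there exist $N_0 \in \mathbb N$ and $s \in \{1,\dots,r\}$ with $r_p = s$ for all $p \geq N_0$. For such $p$, no truncation occurs, so Proposition~\ref{prop:subfficient} applies directly and yields $f(\mathbb U_{[p]}) \geq f(\mathbb U_{[p-1]})$.

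The main technical obstacle I anticipate is justifying the inequality $f(\tilde{\mathbb U}) \geq f(\mathbb U_{[p-1]})$ at a truncation iteration, since Lemma~\ref{lem:lambda-k} is stated only for non-truncation iterations. However, inspection of its proof (in the appendix) shows that the per-mode inequality \eqref{eq:obj-k} arises purely from the polar decomposition update with the proximal correction, and does not depend on whether a subsequent truncation is applied. Therefore the argument transports verbatim to the APD portion of any iteration, which is exactly what the above estimate requires.
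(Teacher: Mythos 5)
Your proof is correct and follows essentially the same route as the paper's: bound the loss at each truncation iteration by $|J|\kappa^2$, telescope to get $f(\mathbb U_{[p]})\geq f(\mathbb U_{[0]})-r\kappa^2>0$ via the choice $\kappa<\sqrt{f(\mathbb U_{[0]})/r}$, conclude the iterate cannot become vacuous, and invoke the finiteness of truncations for the eventual monotonicity. Your explicit justification that the per-mode increase of Lemma~\ref{lem:lambda-k} still applies to the APD sweep preceding a truncation is a point the paper leaves implicit, but it is the same underlying argument.
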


\begin{proof}
Since the initial number $r$ of columns is finite, the truncation will occur at most $r$ times and the total decreased number of columns of matrices in $\mathbb U_{[p]}$ is {bounded above} by $r$.
It follows from Step 2 of Algorithm~\ref{algo} and Lemma~\ref{lem:lambda-k} that if {the $p$-th iteration} is a truncation iteration and the number of columns of the matrices in $\mathbb U_{[p-1]}$ is decreased from $r_1$ to $r_2<r_1$, then we have
\[
f(\mathbb U_{[p]})\geq f(\mathbb U_{[p-1]})-(r_1-r_2)\kappa^2.
\]
By the truncation strategy in Algorithm~\ref{algo}, {after all the truncation iterations}, {the value of the objective function will decrease} at most $r\kappa^2$. {Moreover, at each iteration without truncation, the value of the objective function} is nondecreasing by Lemma~\ref{lem:lambda-k} and $r\kappa^2<f(\mathbb U_{[0]})$. Hence, {$\mathbb U_{[p]}$ cannot be vacuous.} As there can only be a finite number of truncations, there exists $N_0$ such that for any $p\geq N_0$, the $p$-th iteration is not a truncation iteration, {and the conclusion then follows.}
\end{proof}

Let us consider the following optimization problem
\begin{equation}\label{eq:sota-unconstrained}
\max_{\mathbb U} h(\mathbb U):=f(\mathbb U)+\sum_{i=1}^k\delta_{V(r,n_i)}(U^{(i)}).
\end{equation}
{It is straightforward to verify that} \eqref{eq:sota-unconstrained} is an unconstrained reformulation of problem \eqref{eq:sota-max} and $h$ is a KL function according to Lemma~\ref{lem:KL functions}. Readers can find the proof of the next lemma in Appendix~\ref{app:proofalgo-sub}.
\begin{lemma}[Subdifferential Bound]\label{lem:subdiff}
If the $(p+1)$-st iteration is not a truncation iteration, then there exists a subdifferential $\mathbb W_{[p+1]}\in\partial h(\mathbb U_{[p+1]})$ such that
\begin{equation}\label{eq:subdiff}
\|\mathbb W_{[p+1]}\|_F\leq \sqrt{k}(2r\sqrt{r}{\|\mathcal A\|^2} +\epsilon)\|\mathbb U_{[p+1]}-\mathbb U_{[p]}\|_F.
\end{equation}
\end{lemma}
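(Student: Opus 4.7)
The plan is to construct an explicit element $\mathbb W_{[p+1]} \in \partial h(\mathbb U_{[p+1]})$ from the polar-decomposition data produced at iteration $p+1$, and then bound its Frobenius norm by combining a quantitative Lipschitz estimate on the block gradient of $f$ with the contribution of the proximal correction. Since $f$ is a smooth polynomial and each $\delta_{V(r,n_i)}$ is lower semicontinuous, the sum rule for limiting subdifferentials \cite{RW-98} yields
\[
\partial h(\mathbb U_{[p+1]}) = \nabla f(\mathbb U_{[p+1]}) + \prod_{i=1}^k N_{V(r,n_i)}(U^{(i)}_{[p+1]}),
\]
and the explicit description $N_{V(r,n_i)}(U^{(i)}_{[p+1]}) = \{U^{(i)}_{[p+1]} S : S \in \operatorname{S}^{r\times r}\}$ recalled in Subsection~\ref{sec:stiefel} reduces the task blockwise: for each $i$ we must exhibit a symmetric $P_i$ such that $\mathbb W_i^{[p+1]} \coloneqq \nabla_i f(\mathbb U_{[p+1]}) + U^{(i)}_{[p+1]} P_i$ is small in Frobenius norm.

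The polar decomposition step \eqref{eq:polar} supplies the natural candidate. At stage $i$ of iteration $p+1$ we have either $V^{(i)}_{[p+1]}\Lambda^{(i)}_{[p+1]} = U^{(i)}_{[p+1]} S^{(i)}_{[p+1]}$ with $S^{(i)}_{[p+1]}$ symmetric, or, if the proximal correction \eqref{eq:proximal-polar} is triggered, the analogous identity with left-hand side $V^{(i)}_{[p+1]}\Lambda^{(i)}_{[p+1]} + \epsilon U^{(i)}_{[p]}$. Because the normal cone is a linear subspace that contains $U^{(i)}_{[p+1]}$ itself (take $S = I$), we may take $U^{(i)}_{[p+1]} P_i$ to be a fixed scalar multiple of $V^{(i)}_{[p+1]}\Lambda^{(i)}_{[p+1]}$ in the non-prox case and of $V^{(i)}_{[p+1]}\Lambda^{(i)}_{[p+1]} + \epsilon(U^{(i)}_{[p]} - U^{(i)}_{[p+1]})$ in the prox case. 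The key observation is that by the definitions in \eqref{eq:critial} and \eqref{eq:lambda-tensor}, $V^{(i)}_{[p+1]}\Lambda^{(i)}_{[p+1]}$ coincides (up to the constant absorbed in $P_i$) with $\nabla_i f$ evaluated at the intermediate point $\mathbb U_{i-1,[p+1]}$ introduced in \eqref{eq:variable-u}. Consequently the residual $\mathbb W_i^{[p+1]}$ collapses to a difference of the form
\[
\mathbb W_i^{[p+1]} = \bigl[\nabla_i f(\mathbb U_{[p+1]}) - \nabla_i f(\mathbb U_{i-1,[p+1]})\bigr] + \epsilon\,\bigl(U^{(i)}_{[p+1]} - U^{(i)}_{[p]}\bigr)\,\mathbf 1_{\text{prox at stage }i},
\]
where the two reference points differ only in their tail blocks $U^{(i)}, U^{(i+1)}, \dots, U^{(k)}$, moving from $[p]$ to $[p+1]$.

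The remaining work is a quantitative Lipschitz estimate for the block map $\mathbb U \mapsto V^{(i)}(\mathbb U)\Lambda(\mathbb U)$ on the compact set $\prod_i V(r,n_i)$. Writing its $j$-th column as $\lambda_j(\mathbb U)\,\mathbf v^{(i)}_j(\mathbb U)$, the column-unit structure forces $|\lambda_j| \le \|\mathcal A\|$ and $\|\mathbf v^{(i)}_j\| \le \|\mathcal A\|$; telescoping the difference across the modes $\ell \in \{i,\dots,k\}$ on which the two points disagree, and then Cauchy-Schwarz across the $r$ columns, produces the columnwise Lipschitz constant that yields the $2r\sqrt{r}\|\mathcal A\|^2$ coefficient in the claimed bound, while the proximal remainder contributes the additive $\epsilon$. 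Finally $\|\mathbb W_{[p+1]}\|_F^2 = \sum_{i=1}^k \|\mathbb W_i^{[p+1]}\|_F^2$ together with $\sum_i \|U^{(i)}_{[p+1]} - U^{(i)}_{[p]}\|_F^2 = \|\mathbb U_{[p+1]} - \mathbb U_{[p]}\|_F^2$ produces the $\sqrt{k}$ factor after taking square roots. The principal difficulty is the multilinear-algebra bookkeeping: one must propagate each mode-change through the bilinear product $\lambda_j\mathbf v^{(i)}_j$ without picking up extraneous factors of $k$, and harvest exactly the dimension dependence $r\sqrt{r}$; this technical but routine computation is what justifies deferring the details to Appendix~\ref{app:proofalgo}.
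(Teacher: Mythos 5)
Your construction is exactly the paper's: you take the same subdifferential element, namely $\nabla_i f(\mathbb U_{[p+1]})$ minus twice the polar factorization $V^{(i)}_{[p+1]}\Lambda^{(i)}_{[p+1]}+\alpha(U^{(i)}_{[p]}-U^{(i)}_{[p+1]})\in N_{V(r,n_i)}(U^{(i)}_{[p+1]})$, interpret the residual as a gradient difference between $\mathbb U_{[p+1]}$ and the intermediate point, and bound it by the same telescoping-over-modes Lipschitz estimate using $|\mathcal A\tau(\mathbf x)|\le\|\mathcal A\|$, followed by summation in quadrature over the $k$ blocks. This matches the paper's Appendix~\ref{app:proofalgo-sub} argument step for step, so no further comparison is needed.
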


\subsection{Global convergence}\label{sec:global}

The following classical result can be found in \cite{ABS-13}.
\begin{lemma}[Abstract Convergence]\label{lem:abs-conv}
Let $p : \mathbb R^n\rightarrow\mathbb R\cup\{\pm\infty\}$ be a proper lower semicontinuous function and {let $\{\mathbf x^{(k)}\}\subseteq \mathbb R^n$ be} a sequence satisfying the following properties
\begin{enumerate}
\item there is a constant $\alpha>0$ such that
\[
p(\mathbf x^{(k+1)})-p(\mathbf x^{(k)})\geq \alpha\|\mathbf x^{(k+1)}-\mathbf x^{(k)}\|^2,
\]
\item there is a constant $\beta>0$ and a $\mathbf w^{(k+1)}\in \partial p(\mathbf x^{(k+1)})$ such that
\[
\|\mathbf w^{(k+1)}\|\leq \beta\|\mathbf x^{(k+1)}-\mathbf x^{(k)}\|,
\]
\item there is a subsequence $\{\mathbf x^{(k_i)}\}$ of $\{\mathbf x^{(k)}\}$ and $\mathbf x^*\in\mathbb R^n$ such that
\[
\mathbf x^{(k_i)}\rightarrow\mathbf x^*\ \text{and }p(\mathbf x^{(k_i)})\rightarrow p(\mathbf x^*)\ \text{as }i\rightarrow\infty.
\]
\end{enumerate}
If $p$ has the Kurdyka-\L ojasiewicz property at the point $\mathbf x^*$, then the whole sequence $\{\mathbf x^{(k)}\}$ converges to $\mathbf x^*$, and $\mathbf x^*$ is a critical point of $p$.
\end{lemma}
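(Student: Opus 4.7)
The strategy is the classical Attouch--Bolte--Svaiter scheme. First I would observe that hypothesis (1) makes $\{p(\mathbf x^{(k)})\}$ monotone, and combined with hypothesis (3) forces the entire sequence of function values to converge to $p(\mathbf x^*)$. Telescoping (1) then yields $\sum_k \|\mathbf x^{(k+1)} - \mathbf x^{(k)}\|^2 < \infty$, so in particular $\|\mathbf x^{(k+1)} - \mathbf x^{(k)}\| \to 0$. The degenerate case where $p(\mathbf x^{(k_0)}) = p(\mathbf x^*)$ for some finite $k_0$ is handled separately: hypothesis (1) then pins $\mathbf x^{(k)} = \mathbf x^{(k_0)}$ for all $k \ge k_0$, and hypothesis (3) identifies this common value with $\mathbf x^*$. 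So we may assume the strict inequality $p(\mathbf x^{(k)}) \ne p(\mathbf x^*)$ holds, which is what is needed to invoke the KL inequality (applied to $p$ or $-p$ according to the direction of monotonicity).

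The heart of the argument is upgrading $\ell^2$ summability of the increments to $\ell^1$ summability. For $k$ sufficiently large, one argues inductively that $\mathbf x^{(k)}$ lies in the KL neighborhood $U$ of $\mathbf x^*$ with $p(\mathbf x^{(k)})$ inside the strip on which the KL inequality holds. The KL inequality together with hypothesis (2) gives
\[
\varphi'\!\left(p(\mathbf x^{(k)}) - p(\mathbf x^*)\right) \;\geq\; \frac{1}{\operatorname{dist}(\mathbf 0, \partial p(\mathbf x^{(k)}))} \;\geq\; \frac{1}{\beta\,\|\mathbf x^{(k)} - \mathbf x^{(k-1)}\|}.
\]
Concavity of $\varphi$ combined with hypothesis (1) yields
\[
\varphi\!\left(p(\mathbf x^{(k)}) - p(\mathbf x^*)\right) - \varphi\!\left(p(\mathbf x^{(k+1)}) - p(\mathbf x^*)\right) \;\geq\; \frac{\alpha\,\|\mathbf x^{(k+1)} - \mathbf x^{(k)}\|^2}{\beta\,\|\mathbf x^{(k)} - \mathbf x^{(k-1)}\|}.
\]
Writing $\Delta_k$ for the left-hand side and applying the AM--GM inequality $2ab \le a^2+b^2$ to the factors $\|\mathbf x^{(k+1)}-\mathbf x^{(k)}\|$ and $\|\mathbf x^{(k)}-\mathbf x^{(k-1)}\|^{1/2}\Delta_k^{1/2}$ produces
\[
\|\mathbf x^{(k+1)} - \mathbf x^{(k)}\| \;\leq\; \tfrac{1}{2}\|\mathbf x^{(k)} - \mathbf x^{(k-1)}\| + \tfrac{\beta}{2\alpha}\,\Delta_k.
\]
Summing from $k_0$ to $N$, the $\Delta_k$ terms telescope (since $\varphi$ is continuous at $0$ with $\varphi(0)=0$), and rearranging yields a uniform bound on $\sum_k \|\mathbf x^{(k+1)} - \mathbf x^{(k)}\|$.

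Finite length implies $\{\mathbf x^{(k)}\}$ is Cauchy in $\mathbb R^n$ and hence converges; by hypothesis (3) the limit must equal $\mathbf x^*$. For criticality, hypothesis (2) combined with $\|\mathbf x^{(k+1)} - \mathbf x^{(k)}\| \to 0$ gives $\mathbf w^{(k+1)} \to \mathbf 0$ with $\mathbf w^{(k+1)} \in \partial p(\mathbf x^{(k+1)})$, $\mathbf x^{(k+1)} \to \mathbf x^*$, and $p(\mathbf x^{(k+1)}) \to p(\mathbf x^*)$; the closedness of the limiting subdifferential graph (built into the definition of $\partial p$) then yields $\mathbf 0 \in \partial p(\mathbf x^*)$. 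The main obstacle in executing this plan is the inductive bookkeeping in the second paragraph: one must choose a starting index so large that $\mathbf x^{(k_0)}$ is close enough to $\mathbf x^*$, $p(\mathbf x^{(k_0)})$ is inside the KL strip, and the telescoped bound on $\sum_{k \ge k_0}\|\mathbf x^{(k+1)}-\mathbf x^{(k)}\|$ keeps all subsequent iterates inside $U$, so that the KL inequality remains applicable at every step. This is exactly the subtle point of the ABS--13 argument.
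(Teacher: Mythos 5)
The paper offers no proof of this lemma at all; it is quoted as a classical result with a citation to Attouch--Bolte--Svaiter, and your plan is precisely the argument of that reference (telescoping to get $\ell^2$ summability, KL inequality plus the relative-error bound and concavity of $\varphi$ to upgrade to finite length via the AM--GM trick, then closedness of the limiting subdifferential for criticality). Your parenthetical about applying KL to $-p$ is the right way to reconcile the ascent form of hypothesis (1) with the paper's definition of the KL strip, so the plan is correct and matches the cited proof.
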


Regarding the global convergence of Algorithm~\ref{algo}, by Proposition~\ref{prop:positive}, we can assume without loss of generality that there is no truncation iteration in the sequence $\{\mathbb U_{[p]}\}$ generated by Algorithm~\ref{algo}.
\begin{proposition}\label{prop:monotone}
Given a sequence $\{\mathbb U_{[p]}\}$ generated by Algorithm~\ref{algo}, the sequence $\{f(\mathbb U_{[p]})\}$ increases monotonically and hence converges.
\end{proposition}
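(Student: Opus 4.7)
The plan is to combine the sufficient decrease inequality with a simple boundedness argument. By the remark immediately preceding the proposition (justified by Proposition~\ref{prop:positive}), we may assume without loss of generality that no iteration in $\{\mathbb U_{[p]}\}$ is a truncation iteration. Under this assumption, Proposition~\ref{prop:subfficient} directly gives
\[
f(\mathbb U_{[p]})-f(\mathbb U_{[p-1]})\geq \frac{\epsilon}{2}\|\mathbb U_{[p]}-\mathbb U_{[p-1]}\|_F^2 \geq 0
\]
for every $p$, so the sequence $\{f(\mathbb U_{[p]})\}$ is monotone nondecreasing.

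Next I would establish that this sequence is bounded above, which is needed for convergence. Since each $\mathbb U_{[p]}$ is feasible for \eqref{eq:sota-max}, Proposition~\ref{prop:sota-max} implies that the residual
\[
\|\mathcal A-(U^{(1)}_{[p]},\dots,U^{(k)}_{[p]})\cdot\Upsilon_{[p]}\|^2 = \|\mathcal A\|^2 - f(\mathbb U_{[p]})
\]
is nonnegative (taking $\Upsilon_{[p]}=\operatorname{diag}(\operatorname{Diag}(((U^{(1)}_{[p]})^\tp,\dots,(U^{(k)}_{[p]})^\tp)\cdot\mathcal A))$ as in Proposition~\ref{prop:sota-max}). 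Hence $f(\mathbb U_{[p]})\leq \|\mathcal A\|^2$ for all $p$. A monotone nondecreasing sequence that is bounded above in $\mathbb R$ converges, which yields the claim.

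I do not expect any serious obstacle here: the sufficient decrease in Proposition~\ref{prop:subfficient} provides monotonicity for free once truncation is ruled out, and the uniform upper bound $\|\mathcal A\|^2$ is immediate from the maximization reformulation. The one subtlety worth stating explicitly in the proof is the appeal to Proposition~\ref{prop:positive} to justify dropping truncation iterations: after the final truncation (which happens in finitely many steps), the tail of $\{f(\mathbb U_{[p]})\}$ is monotonic and bounded, so it converges, and convergence of the tail is equivalent to convergence of the full sequence.
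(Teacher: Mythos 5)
Your proof is correct and follows essentially the same route as the paper: monotonicity of the tail via the sufficient-decrease property (after discarding the finitely many truncation iterations justified by Proposition~\ref{prop:positive}), plus an upper bound and the monotone convergence theorem. The only cosmetic difference is that you bound $f(\mathbb U_{[p]})\le\|\mathcal A\|^2$ via the identity from Proposition~\ref{prop:sota-max}, whereas the paper simply notes that $\{\mathbb U_{[p]}\}$ lies in a compact set so $f$ is bounded; both are immediate.
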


\begin{proof}
Since the sequence $\{\mathbb U_{[p]}\}$ is bounded, $\{f(\mathbb U_{[p]})\}$ is bounded as well by the definition (cf.\ \eqref{eq:objective}). The {convergence} then follows from Proposition~\ref{prop:positive}.
\end{proof}
\begin{theorem}[Global Convergence]\label{thm:global}
{Any sequence $\{\mathbb U_{[p]}\}$ generated by Algorithm~\ref{algo} is bounded and converges to a KKT point of the problem~\eqref{eq:sota-max}.}
\end{theorem}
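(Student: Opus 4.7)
The plan is to apply the abstract convergence framework of Lemma~\ref{lem:abs-conv} to the extended objective $h$ defined in \eqref{eq:sota-unconstrained}, using the tools already assembled in Section~\ref{sec:properties}. By Proposition~\ref{prop:positive}, there exists $N_0$ such that no truncation occurs for $p\geq N_0$ and the column dimension stabilizes at some positive integer $s\leq r$. After relabeling, I may therefore assume that the entire sequence $\{\mathbb U_{[p]}\}$ lies in the fixed product $V(s,n_1)\times\cdots\times V(s,n_k)$. Boundedness of $\{\mathbb U_{[p]}\}$ is then immediate from compactness of Stiefel manifolds, so Bolzano--Weierstrass furnishes a subsequence $\{\mathbb U_{[p_i]}\}$ converging to some $\mathbb U^*$ in this product.

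Next I verify the three hypotheses of Lemma~\ref{lem:abs-conv} for the sequence $\{\mathbb U_{[p]}\}$ and the function $h$ (read with the ascent convention, since $\delta_{V(s,n_i)}$ is identically $0$ on the iterates and $f$ is being maximized). Hypothesis~(1) is exactly Proposition~\ref{prop:subfficient} with $\alpha=\epsilon/2$. Hypothesis~(2) is Lemma~\ref{lem:subdiff} with $\beta=\sqrt{k}(2r\sqrt{r}\|\mathcal A\|^2+\epsilon)$. For Hypothesis~(3), Proposition~\ref{prop:monotone} says $\{f(\mathbb U_{[p]})\}$ is monotone and convergent; combined with continuity of the polynomial $f$ this gives $f(\mathbb U_{[p_i]})\to f(\mathbb U^*)$, and since $\delta_{V(s,n_i)}(U^{(i)}_{[p]})=0$ throughout, we obtain $h(\mathbb U_{[p_i]})\to h(\mathbb U^*)$.

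Since $h$ is a finite sum of a polynomial and indicator functions of the semi-algebraic sets $V(s,n_i)$, Lemma~\ref{lem:KL functions} guarantees that $h$ is a KL function and in particular has the KL property at $\mathbb U^*$. Applying Lemma~\ref{lem:abs-conv} then yields that the whole sequence $\{\mathbb U_{[p]}\}$ converges to $\mathbb U^*$ and $\mathbf 0\in\partial h(\mathbb U^*)$.

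It remains to identify such a critical point of $h$ with a KKT point of \eqref{eq:sota-max}, which I expect to be the most delicate step since it requires invoking a sum rule for the limiting subdifferential. Because $f$ is smooth and the $\delta_{V(s,n_i)}$ depend on disjoint blocks of variables, the calculus rules of \cite{RW-98} together with \eqref{eq:normal-sub} give
\[
\partial h(\mathbb U^*) = \bigl(-\nabla_1 f(\mathbb U^*),\dots,-\nabla_k f(\mathbb U^*)\bigr)+\prod_{i=1}^{k}N_{V(s,n_i)}(U^{(*,i)}),
\]
up to the sign convention corresponding to maximization. Using the explicit description $N_{V(s,n_i)}(U^{(*,i)})=\{U^{(*,i)}S:S\in\mathrm S^{s\times s}\}$ recalled in Section~\ref{sec:stiefel}, the condition $\mathbf 0\in\partial h(\mathbb U^*)$ becomes
\[
\nabla_i f(\mathbb U^*)=U^{(*,i)}P_i\ \text{with }P_i\in\mathrm S^{s\times s},\quad 1\le i\le k.
\]
Since $\nabla_i f(\mathbb U^*)=V^{(i)}_*\Lambda_*$ in the notation of \eqref{eq:critial}--\eqref{eq:lambda-tensor}, this is precisely the KKT system \eqref{eq:kkt} for the problem mLROTA$(s)$, and by Lemma~\ref{lem:kkt-equiv} it identifies $\mathbb U^*$ as a KKT point of the (possibly reduced) LROTA problem. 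The main obstacle is therefore bookkeeping: making the reduction at $N_0$ precise so that the ambient parameter $r$ in the statement is consistently replaced by the stable value $s$, and invoking the correct sum rule for $\partial h$ so that the critical point condition translates cleanly to the KKT system.
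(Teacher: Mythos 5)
Your proposal is correct and follows essentially the same route as the paper: the paper's proof is a one-line invocation of Proposition~\ref{prop:positive} (to discard truncation iterations), Proposition~\ref{prop:subfficient}, Lemma~\ref{lem:subdiff}, Proposition~\ref{prop:monotone}, and the abstract KL convergence result Lemma~\ref{lem:abs-conv} applied to $h$. You merely spell out the details the paper leaves implicit, in particular the identification of the critical-point condition $\mathbf 0\in\partial h(\mathbb U^*)$ with the KKT system \eqref{eq:kkt} via the normal cone formula for the Stiefel manifold.
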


\begin{proof}
Obviously, the sequence $\{\mathbb U_{[p]}\}$ is bounded and the function $h$ is continuous on the product of the Stiefel manifolds. The {convergence} follows from Proposition~\ref{prop:subfficient}, Lemma~\ref{lem:subdiff}, Lemma~\ref{lem:abs-conv}, and Proposition~\ref{prop:monotone}.
\end{proof}

\subsection{{Sublinear convergence rate}}\label{sec:sublinear}
We consider the following function
\begin{equation}\label{eq:function-aml}
q(\mathbb U,\mathbb P) \coloneqq f(\mathbb U)-\sum_{i=1}^k\langle P^{(i)},(U^{(i)})^\tp U^{(i)}-I\rangle,
\end{equation}
{which is a polynomial of degree $2k$ in $N \coloneqq  \sum_{i=1}^k (r n_i + \binom{r+1}{2})$ variables:
\[
(\mathbb U,\mathbb P) = (U^{(1)},\dots,U^{(k)},P^{(1)},\dots, P^{(k)})\in \mathbb{R}^{n_1 \times r} \times\dots\times \mathbb{R}^{n_k \times r}\times \operatorname{S}^{r\times r}\times\dots\times\operatorname{S}^{r\times r}.
\]
Let
\begin{equation}\label{eq:tau}
\tau \coloneqq 1-\frac{1}{2k(6k-3)^{N-1}},
\end{equation}
which is the \textit{\L ojasiewicz exponent} of the polynomial $q$ obtained by Lemma~\ref{lemma:loja1}. {We suppose that $\mathbb U^*$ is a KKT point of \eqref{eq:sota-max} with the multiplier $\mathbb P^*$.}
{For}
\begin{equation}\label{eq:axl-fn}
\hat q(\mathbb U,\mathbb P) \coloneqq q(\mathbb U,\mathbb P)-q(\mathbb U^*,\mathbb P^*),
\end{equation}
we must have
\[
\hat q(\mathbb U^*,\mathbb P^*)=0,\quad \nabla \hat q(\mathbb U^*,\mathbb P^*)=0.
\]
Thus {according to Lemma~\ref{lemma:loja1}}, there exist some $\gamma,c>0$ such that
\[
\|\nabla \hat q(\mathbb U,\mathbb P)\|_F\geq c|\hat q(\mathbb U,\mathbb P)|^{\tau}\ \text{whenever }\|(\mathbb U,\mathbb P)-(\mathbb U^*,\mathbb P^*)\|_F\leq \gamma.
\]
Therefore,
\begin{equation}\label{eq:loj-ineq}
\sum_{i=1}^k\|\nabla_i f(\mathbb U)-2U^{(i)}P^{(i)}\|_F^2\geq c^2(f(\mathbb U)-f(\mathbb U^*))^{2\tau}
\end{equation}
for {any feasible point $\mathbb U$ of \eqref{eq:sota-max} satisfying $\|(\mathbb U,\mathbb P)-(\mathbb U^*,\mathbb P^*)\|_F\leq \gamma$.}
\begin{theorem}[Sublinear Convergence Rate]\label{thm:sublinear}
Let $\{\mathbb U_{[p]}\}$ be a sequence generated by Algorithm~\ref{algo} for a given nonzero tensor $\mathcal A\in\mathbb R^{n_1}\otimes\dots\otimes\mathbb R^{n_k}$ and let $\tau$ be defined by \eqref{eq:tau}.  The following statements hold:
\begin{itemize}
\item[{\rm (1)}] {the sequence} $\{f(\mathbb U_{[p]})\}$ converges to $f^*$, with sublinear convergence rate at least
$O(p^{\frac{1}{1-2\tau}})$, that is, there exist $M_1>0$ and $p_0 \in \mathbb{N}$ such that for all $p \geq p_0$
\begin{equation}\label{eq:sub-linear}
f^*-f(\mathbb U_{[p]}) \leq M_1 \, p^{\frac{1}{1-2\tau}};
\end{equation}
\item[{\rm (2)}] $\{\mathbb U_{[p]}\}$ converges to $\mathbb U^*$ globally with the sublinear convergence rate at least
{$O(p^{\frac{\tau-1}{2\tau-1}})$}, that is, there exist $M_2>0$ and $p_0 \in \mathbb{N}$ such that for all $p \ge p_0$
\[
\|\mathbb U_{[p]} - \mathbb U^*\|_F \leq M_2 \, p^{\frac{\tau-1}{2\tau-1}}.
\]
\end{itemize}
\end{theorem}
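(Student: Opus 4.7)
The plan is to execute the Attouch--Bolte--Svaiter framework adapted to the maximization problem~\eqref{eq:sota-max}. Introduce the nonnegative residual
\[
r_p \coloneqq f^* - f(\mathbb U_{[p]}),
\]
which by Proposition~\ref{prop:monotone} is nonincreasing and converges to $0$. Thanks to Proposition~\ref{prop:positive}, we may assume without loss of generality that no truncation occurs for $p\geq p_0$, so the iteration dimension is fixed; by Theorem~\ref{thm:global}, $\mathbb U_{[p]}\to \mathbb U^*$ for some KKT point whose Lagrange multiplier $\mathbb P^*$ is unique by Proposition~\ref{prop:licq}.

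The main work is to derive the one-step recursion
\begin{equation}\label{eq:plan-recursion}
r_{p+1}^{2\tau}\leq K\,(r_p - r_{p+1}),\qquad p\gg 1,
\end{equation}
by combining three local estimates. The subgradient $\mathbb W_{[p]}\in \partial h(\mathbb U_{[p]})$ furnished by Lemma~\ref{lem:subdiff} has, in view of the Stiefel normal cone description preceding \eqref{eq:normal-sub}, the form $\bigl(\nabla_i f(\mathbb U_{[p]}) - 2U^{(i)}_{[p]}P_{[p]}^{(i)}\bigr)_{i=1}^k$ for some symmetric matrices $P_{[p]}^{(i)}$. Since LICQ makes the multiplier a locally continuous function of the feasible point, $\mathbb P_{[p]}\to \mathbb P^*$, so eventually $(\mathbb U_{[p]},\mathbb P_{[p]})$ lies in the Łojasiewicz neighborhood of $(\mathbb U^*,\mathbb P^*)$ and \eqref{eq:loj-ineq} yields $c\, r_{p+1}^{\tau}\leq \|\mathbb W_{[p+1]}\|_F$. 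Feeding this into the relative-error bound of Lemma~\ref{lem:subdiff} and the sufficient-decrease bound of Proposition~\ref{prop:subfficient} gives
\[
c\, r_{p+1}^{\tau} \leq C_1\,\|\mathbb U_{[p+1]}-\mathbb U_{[p]}\|_F \leq C_1\sqrt{\tfrac{2}{\epsilon}\,(r_p-r_{p+1})},
\]
with $C_1 \coloneqq \sqrt{k}(2r\sqrt{r}\|\mathcal A\|^2+\epsilon)$, which is exactly \eqref{eq:plan-recursion} with $K = 2C_1^2/(\epsilon c^2)$. Since $2\tau>1$ by \eqref{eq:tau}, the classical sequence lemma of the KL literature (cf.~\cite{ABS-13}) applied with $\theta = 2\tau$ converts \eqref{eq:plan-recursion} into $r_p \leq M_1\, p^{1/(1-2\tau)}$, establishing statement~(1).

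For statement~(2) I would invoke the concave desingularizer $\varphi(s) = \tfrac{1}{1-\tau}s^{1-\tau}$. Concavity yields $\varphi(r_p)-\varphi(r_{p+1})\geq r_p^{-\tau}(r_p-r_{p+1})$, and plugging in $r_p^{-\tau}\geq c/(C_1\|\mathbb U_{[p]}-\mathbb U_{[p-1]}\|_F)$ from the same Łojasiewicz-plus-subdifferential bound as above, together with the sufficient decrease, produces
\[
\varphi(r_p)-\varphi(r_{p+1})\geq \frac{c\epsilon}{2C_1}\cdot\frac{\|\mathbb U_{[p+1]}-\mathbb U_{[p]}\|_F^2}{\|\mathbb U_{[p]}-\mathbb U_{[p-1]}\|_F}.
\]
The elementary inequality $a^2/b \geq 2a-b$ then telescopes to
\[
\sum_{q\geq p}\|\mathbb U_{[q+1]}-\mathbb U_{[q]}\|_F \leq C_2\,\varphi(r_p) + \|\mathbb U_{[p]}-\mathbb U_{[p-1]}\|_F,
\]
so that $\|\mathbb U_{[p]}-\mathbb U^*\|_F$ is controlled by the right-hand side via the triangle inequality. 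Inserting the rate from~(1) makes $\varphi(r_p)\lesssim r_p^{1-\tau}$ of order $p^{(\tau-1)/(2\tau-1)}$ and $\|\mathbb U_{[p]}-\mathbb U_{[p-1]}\|_F \leq \sqrt{(2/\epsilon)r_{p-1}}$ of order $p^{-1/(2(2\tau-1))}$; since $\tau>1/2$ forces $\tau-1>-1/2$, the first term dominates and yields statement~(2).

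The main obstacle is the careful verification that the subgradient $\mathbb W_{[p]}$ produced by Lemma~\ref{lem:subdiff} is admissible in~\eqref{eq:loj-ineq}, that is, that the implicit multiplier sequence $\{\mathbb P_{[p]}\}$ actually enters the Łojasiewicz neighborhood of $\mathbb P^*$. This rests on combining the uniqueness of the multiplier at $\mathbb U^*$ from LICQ (Proposition~\ref{prop:licq}) with $\mathbb U_{[p]}\to \mathbb U^*$ (Theorem~\ref{thm:global}) and the explicit Stiefel normal cone form of the subgradient. Once this calibration between the Lagrangian polynomial $q$ and the nonsmooth function $h$ is in hand, the rest is the standard descent-plus-desingularization machinery.
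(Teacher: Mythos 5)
Your proposal is correct and follows essentially the same route as the paper: both rest on the \L ojasiewicz inequality \eqref{eq:loj-ineq} for the Lagrangian polynomial $q$, the subdifferential bound of Lemma~\ref{lem:subdiff}, and the sufficient decrease of Proposition~\ref{prop:subfficient}, assembled into the recursion $r_{p+1}^{2\tau}\leq K(r_p-r_{p+1})$. Two small differences: you apply the \L ojasiewicz inequality directly at step $p+1$, whereas the paper applies it at step $p$ and shifts the gradient to step $p+1$ via a Lipschitz estimate (your version is slightly cleaner and avoids the constant $L$); and for statement (2) you carry out the desingularizer telescoping explicitly, where the paper simply derives the rate for $\beta_p^{1-2\tau}$ and refers to \cite{HL-18} --- your exponent bookkeeping there is correct. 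One repair worth making: the convergence $\mathbb P_{[p]}\to\mathbb P^*$ should not be attributed to LICQ making the multiplier a ``continuous function of the feasible point'' (the iterates are not KKT points, so no multiplier map is defined at them); instead, as in the paper, use the explicit formula $P^{(i)}_{[p]}=(U^{(i)}_{[p]})^\tp\bigl(V^{(i)}_{[p]}\Lambda^{(i)}_{[p]}+\alpha U^{(i)}_{[p-1]}\bigr)-\alpha I$ coming from the polar decomposition, whose limit is $(U^{(*,i)})^\tp V^{(*,i)}\Lambda^*=P^{(*,i)}$ by Theorem~\ref{thm:global} and continuity.
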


\begin{proof}
In the following, we consider the sequence $\{\mathbb U_{[p]}\}$ generated by Algorithm~\ref{algo}.
Let
\[
P^{(i)}_{[p]}:=S^{(i)}_{[p]}-\alpha_{i,[p]} I:=\begin{cases}S^{(i)}_{[p]}-\epsilon I&\text{if proximal correction is executed},\\ S^{(i)}_{[p]}&\text{otherwise},\end{cases}
\]
where $\alpha_{i,[p]}\in\{0,\epsilon\}$.
We also have that
\[
S^{(i)}_{[p]}=\begin{cases}(U^{(i)}_{[p]})^\tp (V^{(i)}_{[p]}\Lambda^{(i)}_{[p]}+\epsilon U^{(i)}_{[p-1]})&\text{if proximal correction is executed},\\
(U^{(i)}_{[p]})^\tp V^{(i)}_{[p]}\Lambda^{(i)}_{[p]}&\text{otherwise}. \end{cases}
\]
Note that $\{\mathbb U_{[p]}\}$ converges by Theorem~\ref{thm:global} and hence {$\{ V^{(i)}_{[p]}\Lambda^{(i)}_{[p]} \}$ converges}. Recall that the proximal correction step is determined by singular values of the matrices $V^{(i)}_{[p]}\Lambda^{(i)}_{[p]}$'s. Thus, for sufficiently large $p$ (say $p\geq p_0$), $\alpha_{i,[p]}$ will be stable for all $p$ and $1 \le i \le r$.
By Lemma~\ref{lem:subdiff} and \eqref{eq:w}, we have
\begin{equation}\label{eq:sub-sublinear}
\|\nabla_i f(\mathbb U_{[p+1]})-2U^{(i)}_{[p+1]}S^{(i)}_{[p+1]}+2\alpha U^{(i)}_{[p+1]}\|_F\leq (r\sqrt{r}{\|\mathcal A\|^2} +\epsilon)\|U^{(i)}_{[p+1]}-U^{(i)}_{[p]}\|_F.
\end{equation}
Since $\{\mathbb U_{[p]}\}$ converges by Theorem~\ref{thm:global}, we see that
\[
\lim_{p\rightarrow\infty}P^{(i)}_{[p]}=\lim_{p\rightarrow\infty}(U^{(i)}_{[p]})^\tp \big(V^{(i)}_{[p]}\Lambda^{(i)}_{[p]}+\alpha U^{(i)}_{[p-1]}\big)-\alpha I=(U^{(*,i)})^\tp V^{(*,i)}\Lambda^*=P^{(*,i)}.
\]
Hence for sufficiently large $p$, we may conclude that
\[
\|(\mathbb U_{[p]},\mathbb P_{[p]})-(\mathbb U^*,\mathbb P^*)\|_F\leq \gamma.
\]
{
This implies }
\begin{align}
c^2(f(\mathbb U_{[p]})-f(\mathbb U^*))^{2\tau}\nonumber &\leq \sum_{i=1}^k\|\nabla_i f(\mathbb U_{[p]})-2U_{[p]}^{(i)}P_{[p]}^{(i)}\|_F^2\nonumber\\
&\leq 2\sum_{i=1}^k\|\nabla_i f(\mathbb U_{[p+1]})-2U_{[p+1]}^{(i)}P_{[p+1]}^{(i)}\|_F^2\nonumber\\
& +2\sum_{i=1}^k\|\nabla_i f(\mathbb U_{[p]})-2U_{[p]}^{(i)}P_{[p]}^{(i)}-\big(\nabla_i f(\mathbb U_{[p+1]})-2U_{[p+1]}^{(i)}P_{[p+1]}^{(i)}\big)\|_F^2\label{eq:fun-lip}\\
&\leq 2(k(r\sqrt{r}{\|\mathcal A\|^2} +\epsilon)^2+L)\|\mathbb U_{[p+1]}-\mathbb U_{[p]}\|_F^2\nonumber\\
&\leq (k(r\sqrt{r}{\|\mathcal A\|^2} +\epsilon)^2+L)\epsilon(f(\mathbb U_{[p+1]})-f(\mathbb U_{[p]}),\nonumber
\end{align}
where the first inequality follows from \eqref{eq:loj-ineq}, the third from \eqref{eq:sub-sublinear} and {the fact that} the function in \eqref{eq:fun-lip} is Lipschitz continuous since $\alpha_{i,[p]}$ is stable for sufficiently large $p$, and the last one follows from Proposition~\ref{prop:subfficient}. Here $L$ is the Lipschitz constant of the function in \eqref{eq:fun-lip} {on} the product of Stiefel manifolds.

{If we set $\beta_p \coloneqq f(\mathbb U^*)-f(\mathbb U_{[p]})$, then we have}
\[
\beta_p-\beta_{p+1}\geq M\beta_p^{2\tau}
\]
for some constant $M>0$,
from which we can show
\[
\beta^{1-2\tau}_{p+1}-\beta_p^{1-2\tau}\geq (2\tau-1)M.
\]
Thus,
\[
\beta^{1-2\tau}_{p}\geq M(2\tau-1)(p-p_0)+\beta_{p_0}^{1-2\tau}
\]
and the conclusion follows {since} $\tau\in(0,1)$.
For a more detailed analysis on the sequence $\{\beta_p\}$, we refer readers to \cite[Section~3.4]{HL-18}.
\end{proof}

{We remark} that the convergence rate in \eqref{eq:sub-linear} is faster than the classical $O(1/p)$ for first order methods in optimization \cite{Beck-book}, while the optimal rate is $O(1/p^2)$ for convex problems by the celebrated work of Nesterov \cite{N-04}.
\subsection{{Linear convergence}}\label{sec:linear}
In this section, we will establish the linear convergence of Algorithm~\ref{algo}. The proof of the next lemma is available in Appendix~\ref{app:proofalgo-gradient}.
\begin{lemma}[Relative Error]\label{lem:gradient-diff}
There exists a constant $\gamma>0$ such that
\[
\|\nabla_i f(\mathbb U_{[p+1]})-U^{(i)}_{[p+1]}(\nabla_i f(\mathbb U_{[p+1]}))^\tp U^{(i)}_{[p+1]}\|_F \leq \gamma \|U^{(i)}_{[p]}-U^{(i)}_{[p+1]}\|_F
\]
for all $1 \le i \le k$ and $p\in \mathbb{N}$.
\end{lemma}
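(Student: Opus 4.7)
The plan is to exploit the polar–decomposition identity produced at the $i$-th sub-iteration of Algorithm~\ref{algo} to write the Euclidean gradient $\nabla_i f(\mathbb U_{[p+1]})$ as $U^{(i)}_{[p+1]}\cdot(\text{symmetric})$ plus an explicit perturbation proportional to $U^{(i)}_{[p+1]}-U^{(i)}_{[p]}$, and then apply the linear map
\[
\Psi_i(B) \coloneqq B - U^{(i)}_{[p+1]} B^{\tp} U^{(i)}_{[p+1]}.
\]
Two elementary facts about $\Psi_i$ drive everything: it satisfies $\|\Psi_i(B)\|_F \le 2\|B\|_F$, and by Lemma~\ref{lem:grand} it annihilates every matrix of the form $U^{(i)}_{[p+1]} P$ with $P \in \operatorname{S}^{r\times r}$. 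Hence it suffices to produce a decomposition $\tfrac12\nabla_i f(\mathbb U_{[p+1]}) = U^{(i)}_{[p+1]} P + E$ with $P$ symmetric and $\|E\|_F \le C\|U^{(i)}_{[p+1]}-U^{(i)}_{[p]}\|_F$, after which $\gamma = 4C$ works.

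The polar decomposition supplies the first half of the identity. Unifying Substeps~1 and~2 at the $i$-th sub-iteration of iteration $p+1$, one has
\[
V^{(i)}_{[p+1]}\Lambda^{(i)}_{[p+1]} + \alpha_{i,[p+1]} U^{(i)}_{[p]} = U^{(i)}_{[p+1]} S^{(i)}_{[p+1]},
\]
with $\alpha_{i,[p+1]}\in\{0,\epsilon\}$ (depending on whether the proximal correction was triggered) and $S^{(i)}_{[p+1]}$ symmetric, so
\[
V^{(i)}_{[p+1]}\Lambda^{(i)}_{[p+1]} = U^{(i)}_{[p+1]}\bigl(S^{(i)}_{[p+1]}-\alpha_{i,[p+1]} I\bigr) + \alpha_{i,[p+1]}\bigl(U^{(i)}_{[p+1]}-U^{(i)}_{[p]}\bigr).
\]
Applying $\Psi_i$ kills the first summand and yields the clean estimate
\[
\|\Psi_i(V^{(i)}_{[p+1]}\Lambda^{(i)}_{[p+1]})\|_F \le 2\epsilon\,\|U^{(i)}_{[p+1]}-U^{(i)}_{[p]}\|_F.
\]

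The core of the argument is the second half: matching $V^{(i)}_{[p+1]}\Lambda^{(i)}_{[p+1]}$ to $\tfrac12\nabla_i f(\mathbb U_{[p+1]})$ with a remainder controlled by the same single block increment. The crucial observation is that the contraction $\tau_i$ skips the $i$-th slot, so $V^{(i)}_{[p+1]}$ is literally the $V^{(i)}$-matrix associated to the intermediate state $\mathbb U_{i,[p+1]} = (U^{(1)}_{[p+1]},\dots,U^{(i)}_{[p+1]},U^{(i+1)}_{[p]},\dots,U^{(k)}_{[p]})$; consequently $\tfrac12\nabla_i f(\mathbb U_{[p+1]}) - V^{(i)}_{[p+1]}\Lambda^{(i)}_{[p+1]} = V^{(i)}_{[p+1]} D_i$ with $D_i$ \emph{diagonal}, whose $(j,j)$-entry is the single-slot correction
\[
(D_i)_{jj} = \langle \mathbf v^{(i)}_{j,[p+1]},\, \mathbf u^{(i)}_{j,[p+1]}-\mathbf u^{(i)}_{j,[p]}\rangle.
\]
Since $\mathbf v^{(i)}_{j,[p+1]}=\mathcal A\tau_i(\cdot)$ is evaluated on unit block vectors, $\|\mathbf v^{(i)}_{j,[p+1]}\|\le\|\mathcal A\|$, giving $\|V^{(i)}_{[p+1]}\|_F\le \sqrt r\,\|\mathcal A\|$ and $\|D_i\|_F\le\|\mathcal A\|\,\|U^{(i)}_{[p+1]}-U^{(i)}_{[p]}\|_F$, whence
\[
\bigl\|\tfrac12\nabla_i f(\mathbb U_{[p+1]}) - V^{(i)}_{[p+1]}\Lambda^{(i)}_{[p+1]}\bigr\|_F \le \sqrt r\,\|\mathcal A\|^2\,\|U^{(i)}_{[p+1]}-U^{(i)}_{[p]}\|_F.
\]

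Triangle inequality and $\|\Psi_i(\cdot)\|_F\le 2\|\cdot\|_F$ combine the two estimates into
\[
\|\Psi_i(\nabla_i f(\mathbb U_{[p+1]}))\|_F \le \bigl(4\epsilon + 4\sqrt r\,\|\mathcal A\|^2\bigr)\|U^{(i)}_{[p+1]}-U^{(i)}_{[p]}\|_F,
\]
so one may take $\gamma = 4\epsilon + 4\sqrt r\,\|\mathcal A\|^2$. By Proposition~\ref{prop:positive}, at most finitely many iterations truncate, so eventually the column counts are stable and the estimate is uniform (absorbing finitely many exceptional iterations into the constant). The main obstacle—and precisely the reason the bound is genuinely \emph{per-block} rather than an aggregate one in $\|\mathbb U_{[p+1]}-\mathbb U_{[p]}\|_F$—lies in the diagonal structure of $D_i$: because $V^{(i)}$ is independent of the $i$-th slot, the only coordinate in which the algorithm's staged diagonal $\Lambda^{(i)}_{[p+1]}$ and the diagonal built into $\nabla_i f$ disagree is the $i$-th, and that single-slot discrepancy is directly the block increment $U^{(i)}_{[p+1]}-U^{(i)}_{[p]}$. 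Getting this localization right—i.e., verifying that no contribution from later blocks leaks into the perturbation—is where the proof must be argued carefully.
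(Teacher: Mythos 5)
Your overall mechanism is the same as the paper's: use the polar identity $V^{(i)}_{[p+1]}\Lambda^{(i)}_{[p+1]}+\alpha U^{(i)}_{[p]}=U^{(i)}_{[p+1]}S^{(i)}_{[p+1]}$ with $S^{(i)}_{[p+1]}$ symmetric, note that the map $\Psi_i(B)=B-U^{(i)}_{[p+1]}B^\tp U^{(i)}_{[p+1]}$ annihilates $U^{(i)}_{[p+1]}P$ for symmetric $P$ and satisfies $\|\Psi_i(B)\|_F\le 2\|B\|_F$, and then control the mismatch between $\tfrac12\nabla_i f(\mathbb U_{[p+1]})$ and the staged matrix $V^{(i)}_{[p+1]}\Lambda^{(i)}_{[p+1]}$. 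The first two ingredients are fine and parallel \eqref{eq:relative-1}--\eqref{eq:relative-2}. The gap is in the third step, precisely at the point you yourself flag as the crux: the claimed identity $\tfrac12\nabla_i f(\mathbb U_{[p+1]})-V^{(i)}_{[p+1]}\Lambda^{(i)}_{[p+1]}=V^{(i)}_{[p+1]}D_i$ with $D_i$ diagonal and $(D_i)_{jj}=\langle\mathbf v^{(i)}_{j,[p+1]},\mathbf u^{(i)}_{j,[p+1]}-\mathbf u^{(i)}_{j,[p]}\rangle$ is false for $i<k$. The gradient $\nabla_i f$ is evaluated at the \emph{final} iterate $\mathbb U_{[p+1]}$, whose $j$-th column is $\lambda_j(\mathbb U_{[p+1]})\,\mathcal A\tau_i(\mathbf x_j)$ with \emph{all} slots at $[p+1]$; by contrast $V^{(i)}_{[p+1]}$ and $\Lambda^{(i)}_{[p+1]}$ are built from $\mathbf x^i_{j,[p+1]}$, whose slots $i,\dots,k$ are still at $[p]$ (cf.\ \eqref{eq:inter-vector}). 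Your observation that $V^{(i)}_{[p+1]}$ is the $V^{(i)}$-matrix of the intermediate state $\mathbb U_{i,[p+1]}$ is correct, but $\mathbb U_{i,[p+1]}\ne\mathbb U_{[p+1]}$ unless $i=k$; for $i<k$ the columns $\mathcal A\tau_i(\mathbf x_j)$ and $\mathcal A\tau_i(\mathbf x^i_{j,[p+1]})$ differ through the later-block increments $U^{(s)}_{[p+1]}-U^{(s)}_{[p]}$, $s>i$, so the mismatch is not a diagonal correction in the $i$-th slot alone. Sanity check: if $U^{(i)}_{[p+1]}=U^{(i)}_{[p]}$ while some later block changes, your bound would force $\tfrac12\nabla_i f(\mathbb U_{[p+1]})=V^{(i)}_{[p+1]}\Lambda^{(i)}_{[p+1]}$, which fails in general. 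Hence the estimate $\|\tfrac12\nabla_i f(\mathbb U_{[p+1]})-V^{(i)}_{[p+1]}\Lambda^{(i)}_{[p+1]}\|_F\le\sqrt r\,\|\mathcal A\|^2\,\|U^{(i)}_{[p+1]}-U^{(i)}_{[p]}\|_F$, and with it your final per-block constant, is not justified.

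The reachable (and sufficient) version is the one actually computed in the proof of Lemma~\ref{lem:subdiff}: telescoping the column differences over the slots $s\ge i$ gives $\|V^{(i)}\Lambda-V^{(i)}_{[p+1]}\Lambda^{(i)}_{[p+1]}\|_F\le 2r\sqrt r\,\|\mathcal A\|^2\,\|\mathbb U_{[p+1]}-\mathbb U_{[p]}\|_F$, i.e.\ the mismatch is controlled by the \emph{aggregate} increment over all blocks, not by the single block $i$. Feeding this into your $\Psi_i$ bookkeeping yields the relative-error inequality with $\|\mathbb U_{[p]}-\mathbb U_{[p+1]}\|_F$ on the right-hand side, which is exactly the form used in Theorem~\ref{thm:linear} (the bound is summed over $i$ against $\|\mathbb U_{[p]}-\mathbb U_{[p-1]}\|_F^2$), and it is also what the paper's own chain of inequalities delivers when traced through. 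So your framework survives with that substitution, but the ``no leakage from later blocks'' localization — the one step your proposal had to get right — does not hold, and the strictly per-block inequality you claim is not established by your argument.
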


\begin{theorem}[Linear Convergence Rate]\label{thm:linear}
Let $\{\mathbb U_{[p]}\}$ be {a sequence} generated by Algorithm~\ref{algo} for a given nonzero tensor $\mathcal A\in\mathbb R^{n_1}\otimes\dots\otimes\mathbb R^{n_k}$.
If $\mathbb U_{[p]}\rightarrow \mathbb U^*$ with $\mathbb U^*$ a nondegenerate KKT point of \eqref{eq:sota-max}, then the whole sequence $\{\mathbb U_{[p]}\}$ converges $R$-linearly to $\mathbb U^*$.
\end{theorem}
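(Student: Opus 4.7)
The plan is to combine three ingredients already established in the paper into the standard KL-type linear rate argument: the sufficient decrease inequality from Proposition~\ref{prop:subfficient}, the relative error bound from Lemma~\ref{lem:gradient-diff}, and the \L ojasiewicz inequality from Lemma~\ref{lem:gradient} (which applies precisely because $\mathbb U^*$ is nondegenerate). By Proposition~\ref{prop:positive} there is some $N_0$ after which no truncation iteration occurs, and since $\mathbb U_{[p]}\to \mathbb U^*$ we may assume in addition that $\|\mathbb U_{[p]}-\mathbb U^*\|_F\leq \epsilon$ for all $p\geq N_0$, where $\epsilon$ is the radius from Lemma~\ref{lem:gradient}.

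For $p\geq N_0$, set $\beta_p := f(\mathbb U^*)-f(\mathbb U_{[p]})\geq 0$. Proposition~\ref{prop:subfficient} gives
\begin{equation}\label{eq:plan-suf}
\|\mathbb U_{[p+1]}-\mathbb U_{[p]}\|_F^2 \leq \frac{2}{\epsilon}\bigl(\beta_p-\beta_{p+1}\bigr).
\end{equation}
Applying Lemma~\ref{lem:gradient-diff} at index $p+1$ and squaring-and-summing in $i$, I obtain a constant $\gamma_1>0$ such that
\[
\sum_{i=1}^k\bigl\|\nabla_i f(\mathbb U_{[p+1]})-U^{(i)}_{[p+1]}(\nabla_i f(\mathbb U_{[p+1]}))^\tp U^{(i)}_{[p+1]}\bigr\|_F^2 \leq \gamma_1\|\mathbb U_{[p+1]}-\mathbb U_{[p]}\|_F^2.
\]
Combining this with Lemma~\ref{lem:gradient} (valid since $\|\mathbb U_{[p+1]}-\mathbb U^*\|_F\leq \epsilon$) yields
\[
\kappa\,\beta_{p+1}\leq \sum_{i=1}^k\bigl\|\nabla_i f(\mathbb U_{[p+1]})-U^{(i)}_{[p+1]}(\nabla_i f(\mathbb U_{[p+1]}))^\tp U^{(i)}_{[p+1]}\bigr\|_F^2 \leq \frac{2\gamma_1}{\epsilon}\bigl(\beta_p-\beta_{p+1}\bigr).
\]
Rearranging gives the Q-linear contraction
\[
\beta_{p+1}\leq \rho\,\beta_p,\qquad \rho := \frac{2\gamma_1}{\kappa\epsilon+2\gamma_1}\in(0,1),
\]
so $\beta_p\leq \rho^{p-N_0}\beta_{N_0}$ for all $p\geq N_0$, i.e.\ the function values $f(\mathbb U_{[p]})$ converge Q-linearly to $f(\mathbb U^*)$.

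To pass to $R$-linear convergence of the iterates, I plug the function-value estimate back into \eqref{eq:plan-suf} to obtain
\[
\|\mathbb U_{[p+1]}-\mathbb U_{[p]}\|_F \leq \sqrt{\tfrac{2}{\epsilon}\beta_p}\leq C_0\,(\sqrt{\rho})^{\,p-N_0}
\]
for a constant $C_0>0$. Since $\sqrt{\rho}\in(0,1)$, the sequence of step lengths is summable, and for every $p\geq N_0$ the triangle inequality gives
\[
\|\mathbb U_{[p]}-\mathbb U^*\|_F \leq \sum_{j\geq p}\|\mathbb U_{[j+1]}-\mathbb U_{[j]}\|_F \leq \frac{C_0}{1-\sqrt{\rho}}\,(\sqrt{\rho})^{\,p-N_0},
\]
which is exactly the $R$-linear convergence claimed.

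The argument is essentially bookkeeping once the three lemmas are in hand; the one place that demands care is verifying that Lemma~\ref{lem:gradient} is applicable, which requires the nondegeneracy hypothesis on $\mathbb U^*$ (so that the \L ojasiewicz exponent is $1/2$ rather than a generic value in $(0,1)$—otherwise one would only recover the sublinear rate of Theorem~\ref{thm:sublinear}). The other subtlety I would double-check is that the relative error constant $\gamma_1$ in Lemma~\ref{lem:gradient-diff} is indeed uniform along the tail of the sequence; this is automatic once truncation has stopped, so the initial reduction to $p\geq N_0$ via Proposition~\ref{prop:positive} is essential.
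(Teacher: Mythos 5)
Your proposal is correct and follows essentially the same route as the paper: sufficient decrease (Proposition~\ref{prop:subfficient}) plus the relative error bound (Lemma~\ref{lem:gradient-diff}) plus the \L ojasiewicz inequality at the nondegenerate limit (Lemma~\ref{lem:gradient}) give $Q$-linear decay of $f(\mathbb U^*)-f(\mathbb U_{[p]})$, and summing the geometrically decaying step lengths yields the $R$-linear rate; your contraction factor $2\gamma_1/(\kappa\epsilon+2\gamma_1)$ matches the paper's \eqref{eq:linear-obj} with $\gamma_1=k\gamma^2$. The only cosmetic caveat is that you use the symbol $\epsilon$ both for the proximal parameter of Algorithm~\ref{algo} and for the neighborhood radius of Lemma~\ref{lem:gradient}; these are distinct constants and should be named differently.
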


\begin{proof}
By Theorem~\ref{thm:global}, the sequence $\{\mathbb U_{[p]}\}$ converges globally to $\mathbb U^*$, which together with $\mathbf x^*:=\operatorname{Diag}\big(((U^{(*,1)})^\tp ,\dots,(U^{(*,k)})^\tp )\cdot\mathcal A\big)$ is a nondegenerate critical point of the function $g$ on $U_{\mathbf n,r}$. Hence for a sufficiently large $p$, Lemma~\ref{lem:gradient} implies that
\[
\sum_{i=1}^k\|\nabla_i f(\mathbb U_{[p]})-U^{(i)}_{[p]}\nabla_i (f(\mathbb U_{[p]}))^\tp U^{(i)}_{[p]}\|_F^2\geq \kappa |f(\mathbb U_{[p]})-f(\mathbb U^*)|.
\]
On the other hand, by Lemma~\ref{lem:gradient-diff}, we have
\[
\sum_{i=1}^k\|\nabla_i f(\mathbb U_{[p]})-U^{(i)}_{[p]}\nabla_i (f(\mathbb U_{[p]}))^\tp U^{(i)}_{[p]}\|_F^2\leq k\gamma^2\|\mathbb U_{[p]}-\mathbb U_{[p-1]}\|_F^2.
\]
Thus,
\begin{align*}
f(\mathbb U_{[p]})-f(\mathbb U_{[p-1]})&\geq \frac{\epsilon}{2}\|\mathbb U_{[p]}-\mathbb U_{[p-1]}\|_F^2\\
&\geq \frac{\kappa\epsilon}{2k\gamma^2}(f(\mathbb U^*)-f(\mathbb U_{[p]})),
\end{align*}
where the first inequality follows from Proposition~\ref{prop:subfficient}, and the second follows from the preceding two inequalities and Proposition~\ref{prop:monotone}.
Therefore, for a sufficiently large $p$, we have
\begin{equation}\label{eq:linear-obj}
f(\mathbb U^*)-f(\mathbb U_{[p]})\leq \frac{2k\gamma^2}{2k\gamma^2+\kappa\epsilon}\big(f(\mathbb U^*)-f(\mathbb U_{[p-1]})\big),
\end{equation}
which establishes the local $Q$-linear convergence of the sequence $\lbrace f(\mathbb{U}_{\left[ p \right]}) \rbrace$. Consequently, we have
\begin{align*}
\|\mathbb U_{[p]}-\mathbb U_{[p-1]}\|_F&\leq\sqrt{\frac{2}{\epsilon}}\sqrt{f(\mathbb U_{[p]})-f(\mathbb U_{[p-1]})}\\
&\leq \sqrt{\frac{2}{\epsilon}}\sqrt{f(\mathbb U^*)-f(\mathbb U_{[p-1]})}\\
&\leq \sqrt{\frac{2}{\epsilon}}\Bigg[\sqrt{\frac{2k\gamma^2}{2k\gamma^2+\kappa\epsilon}}\Bigg]^{p-1}\sqrt{f(\mathbb U^*)-f(\mathbb U_{[0]})},
\end{align*}
which implies that
\[
\sum_{p=p_0}^\infty\|\mathbb U_{[p]}-\mathbb U_{[p-1]}\|_F<\infty
\]
{for any sufficiently large positive integer $p_0$.}
As $\mathbb U_{[p]}\rightarrow\mathbb U^*$, we have
\[
\|\mathbb U_{[p]}-\mathbb U^*\|_F\leq \sum_{s=p}^\infty\|\mathbb U_{[s+1]}-\mathbb U_{[s]}\|_F.
\]
{Hence, we obtain}
\[
\|\mathbb U_{[p]}-\mathbb U^*\|_F\leq\sqrt{\frac{2}{\epsilon}}\sqrt{f(\mathbb U^*)-f(\mathbb U_{[0]})}\frac{1}{1-\sqrt{\frac{2k\gamma^2}{2k\gamma^2+\kappa\epsilon}}}\Big[\sqrt{\frac{2k\gamma^2}{2k\gamma^2+\kappa\epsilon}}\Big]^{p},
\]
which is the claimed $R$-linear convergence of the sequence $\{\mathbb U_{[p]}\}$ {and this completes the proof.}
\end{proof}

The following result follows from Theorems~\ref{thm:linear} and \ref{thm:kkt}.
\begin{theorem}[Generic Linear Convergence]\label{thm:generic}
If $\{\mathbb U_{[p]}\}$ {is a sequence generated} by Algorithm~\ref{algo} for a generic tensor $\mathcal A\in\mathbb R^{n_1}\otimes\dots\otimes\mathbb R^{n_k}$, then the sequence $\{\mathbb U_{[p]}\}$ converges $R$-linearly to a KKT point of \eqref{eq:sota-max}.
\end{theorem}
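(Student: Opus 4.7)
The plan is to combine four ingredients that are already in place: (i) the global convergence result Theorem~\ref{thm:global}, (ii) the truncation stabilization guarantee Proposition~\ref{prop:positive}, (iii) the generic nondegeneracy of primitive KKT points Theorem~\ref{thm:kkt}, and (iv) the local $R$-linear convergence Theorem~\ref{thm:linear} at nondegenerate KKT limits. The overall idea is that for a generic $\mathcal A$, the limit point supplied by global convergence will automatically be a \emph{nondegenerate} KKT point of mLROTA($s$) for whatever reduced rank $s$ the algorithm eventually settles into, and this is exactly the hypothesis needed to invoke Theorem~\ref{thm:linear}.

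First, I would apply Theorem~\ref{thm:global} to conclude that the whole sequence $\{\mathbb U_{[p]}\}$ converges to some KKT point $\mathbb U^*$ of \eqref{eq:sota-max}. Next, Proposition~\ref{prop:positive} tells us that the column size of the iterates stabilizes at some positive integer $s \leq r$ after finitely many truncation steps, say for all $p \geq N_0$. For such $p$, the truncation test in Step~2 of Algorithm~\ref{algo} is not triggered, meaning $\lambda^0_{j,[p+1]} \geq \kappa > 0$ for every $1 \leq j \leq s$. Passing to the limit $p \to \infty$ and using continuity of $\mathcal A\tau(\cdot)$, we obtain $\mathcal A\tau(\mathbf x^*_j) \geq \kappa > 0$ for all $1 \leq j \leq s$, so $\mathbb U^*$ is in fact a \emph{primitive} KKT point of mLROTA($s$) in the sense of Section~\ref{sec:degenerate-kkt}. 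Then I would invoke Theorem~\ref{thm:kkt}: outside a measure-zero (indeed, a proper Zariski-closed) set of tensors, \textbf{every} primitive KKT point of mLROTA($s$) is nondegenerate for each $1 \leq s \leq r$; taking the finite union over $s$ still excludes only a negligible set, so genericity is preserved. Hence $\mathbb U^*$ is a nondegenerate KKT point. Finally, Theorem~\ref{thm:linear} applies verbatim and yields $R$-linear convergence of $\{\mathbb U_{[p]}\}$ to $\mathbb U^*$.

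The only subtle step is (iii) above, namely verifying that the limit is primitive rather than merely KKT. The truncation threshold $\kappa$ fixed in Step~0 is crucial here: it is precisely what rules out diagonal entries of $\Lambda$ drifting to zero along the tail of the sequence, which would otherwise leave open the possibility of a non-primitive limit whose nondegeneracy is not guaranteed by Theorem~\ref{thm:kkt}. Once this point is settled the rest is essentially bookkeeping, since Theorem~\ref{thm:linear} is stated exactly for nondegenerate limits and Theorem~\ref{thm:kkt} is stated exactly for primitive KKT points. I do not foresee any substantial obstacle beyond carefully handling the rank drop $r \mapsto s$: one must remember that after stabilization the algorithm is effectively running on $V(s,n_1)\times\dots\times V(s,n_k)$, and that the genericity statement must be applied at the correct rank $s$ (and then taken uniformly over $s \in \{1,\dots,r\}$ via a finite union of exceptional sets).
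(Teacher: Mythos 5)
Your proposal is correct and follows exactly the route the paper intends: the paper offers no written proof beyond the remark that the result ``follows from Theorems~\ref{thm:linear} and~\ref{thm:kkt}'', and your argument is precisely that chain --- global convergence (Theorem~\ref{thm:global}) plus stabilization of the column size (Proposition~\ref{prop:positive}) plus the truncation threshold $\kappa$ forcing the limit to be a primitive KKT point of mLROTA($s$), which is nondegenerate for generic $\mathcal A$ by Theorem~\ref{thm:kkt} (via Proposition~\ref{prop:nondegnerate}), so that Theorem~\ref{thm:linear} applies. Your explicit verification that the limit is primitive, and the finite union over $s\in\{1,\dots,r\}$ of the exceptional sets, are exactly the details the paper leaves implicit.
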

\section{Discussions on Proximal Correction and Truncation}\label{sec:discussion}
In this section, we will {carry out} a further {study of} proximal corrections and {truncation iterations} in Algorithm~\ref{algo}. We will prove that if {we make an appropriate assumption on the limiting point}, then the truncation iteration is unnecessary and proximal corrections {are only needed for finitely many times.} Thus, {our iAPD} reduces to {the classical APD} proposed in \cite{GC-19} {after finitely many iterations}. Remarkably, the assumption on the whole {iteration sequence} (cf.\ \cite[Assumption~A]{GC-19}) is {vastly relaxed} to a requirement on the {limiting point}. {Together with conclusions in Section~\ref{sec:manifold} about KKT points, our results in this section can shed some light on the further understanding of APD and iAPD.}

\subsection{{Proximal correction}}\label{sec:proximal}
In this subsection, we will prove that in most situations, the proximal correction in Algorithm~\ref{algo} is unnecessary. {Before we proceed, we introduce the notion of \textit{regular KKT point}}.

\begin{definition}[Regular KKT Point]\label{def:regular}
A KKT point $\mathbb U:=(U^{(1)},\dots,U^{(k)})\in\mathbb R^{n_1\times r}\times\dots\times\mathbb R^{n_k\times r}$ of \eqref{eq:sota-max} is called a \textit{regular KKT point} if the matrix $V^{(i)}\Lambda$ (cf.\ \eqref{eq:critial} and \eqref{eq:lambda-tensor})
is of rank at least $\min\{r,n_i-1\}$ for each $1\le i \le k$.
\end{definition}

The requirement for a regular KKT point in Definition~\ref{def:regular} is natural. {The matrix $U^{(i)}$ is orthonormal and hence it is of full rank, for each $1 \le i \le k$. On the other hand, these matrices are polar orthonormal factor matrices of $V^{(i)}\Lambda$'s by Algorithm~\ref{algo}.} {If for some $1 \le i \le k$, the matrix $V^{(i)}\Lambda$ is of defective rank}, then the best rank $r$ approximation {of} the $i$-th factor matrix is not unique. The case that $r=n_i$ is an exceptional case, see Lemma~\ref{lem:error-def}.
With Lemma~\ref{lem:error-def}, we have a revised proximal correction step, {{which is} described in Algorithm~\ref{algo:revised}. }


\begin{algorithm}{Revisited Proximal Step}\label{algo:revised}\\
\begin{boxedminipage}{1\textwidth}
$\tau>\epsilon$ is a given constant.
\begin{algorithmic}

\STATE{Substep 2 [Revised Proximal Correction]: If $\sigma^{(i)}_{r,[p]}<\epsilon$, then consider the following two cases.
\begin{enumerate}
\item [Case (i)] If $r=n_i$ and $\sigma^{(i)}_{r-1,[p]}\geq\tau$, then define a vector
    \[
    \hat{\mathbf g}^{(i)}_r:=\begin{cases}-\mathbf g^{(i)}_r&\text{if }\langle \mathbf g^{(i)}_r,(U^{(i)}_{[p-1]}H^{(i)}_{[p]})_r\rangle<0\\ \mathbf g^{(i)}_r&\text{otherwise}\end{cases}
    \]
    where $\mathbf g^{(i)}_r$ is the {$r$-th} column of the matrix $G^{(i)}_{[p]}$ and similar for $(U^{(i)}_{[p-1]}H^{(i)}_{[p]})_r$. Form a matrix $\hat G^{(i)}_{[p]}$ from $G^{(i)}_{[p]}$ by replacing the last column with $\hat{\mathbf g}^{(i)}_r$. Let
\begin{equation}\label{eq:revised-polar}
U^{(i)}_{[p]}:=\hat G^{(i)}_{[p]}(H^{(i)}_{[p]})^\tp \ \text{and }S^{(i)}_{[p]}:=(U^{(i)}_{[p]})^\tp V^{(i)}_{[p]}\Lambda^{(i)}_{[p]}.
\end{equation}

\item [Case (ii)]
For the other cases, compute the polar decomposition of the matrix $V^{(i)}_{[p]}\Lambda^{(i)}_{[p]}+\epsilon U^{(i)}_{[p-1]}$ as
\begin{equation*}
V^{(i)}_{[p]}\Lambda^{(i)}_{[p]}+\epsilon U^{(i)}_{[p-1]}=\hat U^{(i)}_{[p]}\hat S^{(i)}_{[p]}
\end{equation*}
for an orthonormal matrix $\hat U^{(i)}_{[p]}\in V(r,n_i)$ and a symmetric positive semidefinite matrix $\hat S^{(i)}_{[p]}$.
Update $U^{(i)}_{[p]}:=\hat U^{(i)}_{[p]}$ and $S^{(i)}_{[p]}:=\hat S^{(i)}_{[p]}$.
\end{enumerate}

Set $i:=i+1$ and go to Substep 1.}
\end{algorithmic}
\end{boxedminipage}
\end{algorithm}

If we replace the Substep 2 in Algorithm~\ref{algo} by the revised version described in Algorithm~\ref{algo:revised}, then the sequence $\{\mathbb U_{[p]}\}$ still has the sufficient {decreasing property.} This is the content of the following lemma.

\begin{lemma}[Revised Version]\label{lem:relative-decrease}
Suppose that $\tau>\epsilon {> 0}$.
For {any $p \in \mathbb{N}$ such that the $p$-th iteration} is not a truncation iteration, we have
\begin{equation}\label{eq:obj-k-rev}
f(\mathbb U_{i+1,[p]})-f(\mathbb U_{i,[p]})\geq \frac{1}{2}\min\{\epsilon,\tau-\epsilon\}\|U^{(i+1)}_{[p]}-U^{(i+1)}_{[p-1]}\|_F^2,\quad 0 \le i \le k-1.
\end{equation}
Moreover, {the matrix $S^{(i)}_{[p]}$ defined in \eqref{eq:revised-polar} is symmetric}.
\end{lemma}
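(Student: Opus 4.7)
The plan is to mirror the argument for Lemma~\ref{lem:lambda-k}, treating the two sub-cases of the new Substep~2 in Algorithm~\ref{algo:revised} separately. Case~(ii) is structurally identical to the proximal correction in Algorithm~\ref{algo}: the matrix $U^{(i+1)}_{[p]} = \hat U^{(i+1)}_{[p]}$ arises as the polar factor of $V^{(i+1)}_{[p]}\Lambda^{(i+1)}_{[p]} + \epsilon U^{(i+1)}_{[p-1]}$, so the polar-optimality argument of Lemma~\ref{lem:lambda-k} carries over verbatim and yields $f(\mathbb U_{i+1,[p]}) - f(\mathbb U_{i,[p]}) \geq \tfrac{\epsilon}{2}\|U^{(i+1)}_{[p]} - U^{(i+1)}_{[p-1]}\|_F^2$, which is at least $\tfrac{1}{2}\min\{\epsilon,\tau-\epsilon\}\|U^{(i+1)}_{[p]} - U^{(i+1)}_{[p-1]}\|_F^2$. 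The symmetry of $S^{(i+1)}_{[p]}$ in Case~(i) is a direct computation: writing $V^{(i+1)}_{[p]}\Lambda^{(i+1)}_{[p]} = G\Sigma H^\tp$ for its SVD and $U^{(i+1)}_{[p]} = \hat G H^\tp$, one finds $S^{(i+1)}_{[p]} = (U^{(i+1)}_{[p]})^\tp V^{(i+1)}_{[p]}\Lambda^{(i+1)}_{[p]} = H(\hat G^\tp G\,\Sigma) H^\tp$; since $\hat G$ differs from $G$ only in a single column sign, $\hat G^\tp G = \operatorname{diag}(1,\ldots,1,\pm 1)$ commutes with the diagonal matrix $\Sigma$, making $\hat G^\tp G\,\Sigma$ diagonal, whence $S^{(i+1)}_{[p]}$ is the orthogonal conjugate of a symmetric matrix.

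The heart of the proof is the sufficient-decrease estimate in Case~(i). Following the pattern of Lemma~\ref{lem:lambda-k}, the increment admits the identity
\[
f(\mathbb U_{i+1,[p]}) - f(\mathbb U_{i,[p]}) = 2\Delta + \sum_{j=1}^r (\mu'_j - \mu_j)^2 \geq 2\Delta,
\]
where $\Delta := \langle V^{(i+1)}_{[p]}\Lambda^{(i+1)}_{[p]},\, U^{(i+1)}_{[p]} - U^{(i+1)}_{[p-1]}\rangle$ and $\mu_j,\mu'_j$ are the old and new diagonal entries. Substituting the signed factorization $V^{(i+1)}_{[p]}\Lambda^{(i+1)}_{[p]} = U^{(i+1)}_{[p]} H \tilde\Sigma H^\tp$ with $\tilde\Sigma = \operatorname{diag}(\sigma_1,\ldots,\sigma_{r-1}, s\sigma_r)$, $s\in\{\pm 1\}$, and introducing the orthogonal matrix $\tilde Q := \hat G^\tp U^{(i+1)}_{[p-1]} H$, I would derive
\[
\Delta = \sum_{j=1}^r \tilde\Sigma_{jj}(1 - \tilde Q_{jj}),\qquad \|U^{(i+1)}_{[p]} - U^{(i+1)}_{[p-1]}\|_F^2 = 2\sum_{j=1}^r (1 - \tilde Q_{jj}).
\]
The flipping rule in Algorithm~\ref{algo:revised} is designed precisely so that $\tilde Q_{rr} = \langle \hat{\mathbf g}_r, U^{(i+1)}_{[p-1]}\mathbf h_r\rangle \geq 0$. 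Modulo this, the estimate reduces to the key auxiliary inequality
\[
1 - \tilde Q_{rr} \leq \sum_{j<r}(1 - \tilde Q_{jj}),
\]
which, together with $\sigma_j \geq \tau$ for $j<r$ and $\sigma_r < \epsilon$, allows one to absorb the worst-case negative contribution $-\sigma_r(1 - \tilde Q_{rr})$ (appearing only in the flip sub-case $s=-1$) into the positive contributions indexed by $j<r$; a short computation then yields $\Delta \geq \tfrac{1}{4}(\tau-\epsilon)\|U^{(i+1)}_{[p]} - U^{(i+1)}_{[p-1]}\|_F^2$, which delivers the claimed decrease with constant $\tfrac{1}{2}(\tau-\epsilon) \geq \tfrac{1}{2}\min\{\epsilon,\tau-\epsilon\}$.

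The main obstacle is proving the auxiliary inequality above. I plan to establish it by writing $\tilde Q$ in block form, with top-left $(r-1)\times(r-1)$ block $A$, top-right column $\mathbf b$, bottom-left row $\mathbf c^\tp$, and bottom-right scalar $d := \tilde Q_{rr}\geq 0$. Orthogonality of $\tilde Q$ forces $AA^\tp = I - \mathbf b\mathbf b^\tp$ and $\|\mathbf b\|^2 = 1 - d^2$, so the singular values of $A$ are exactly $\{d,1,1,\ldots,1\}$ (with $d$ appearing once and $1$ appearing $r-2$ times). Von Neumann's trace inequality applied to $\operatorname{tr}(A) = \operatorname{tr}(A\cdot I)$ then gives $\operatorname{tr}(A)\leq d + (r-2)$, which upon rearrangement is precisely the required inequality. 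This is the step where the sign-flipping correction in Algorithm~\ref{algo:revised} is indispensable: the nonnegativity $d\geq 0$ converts the unsigned bound $|d|+(r-2)$ into the signed bound $d+(r-2)$, and without it the negative eigendirection of $\tilde\Sigma$ could cause $\Delta$ to be unboundedly negative relative to $\|U^{(i+1)}_{[p]} - U^{(i+1)}_{[p-1]}\|_F^2$, destroying the monotonicity.
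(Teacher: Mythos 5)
Your proof is correct, and its overall architecture coincides with the paper's: reduce to Case (i) of Algorithm~\ref{algo:revised}, split the increment $\Delta=\langle V^{(i+1)}_{[p]}\Lambda^{(i+1)}_{[p]},\,U^{(i+1)}_{[p]}-U^{(i+1)}_{[p-1]}\rangle$ into the contribution of the first $r-1$ singular directions (weighted by $\sigma_j\geq\tau$) and the last one (weighted by $\sigma_r<\epsilon$), and absorb the possibly negative last term using the sign-flip rule together with the inequality $1-\tilde Q_{rr}\leq\sum_{j<r}(1-\tilde Q_{jj})$. That inequality is literally the square of the paper's key estimate \eqref{eq:revised-rk1}, so you have isolated exactly the right pivot, and it is indeed used twice (once to absorb the $\epsilon$-term, once to pass from $\sum_{j<r}$ back to the full squared distance), just as in \eqref{eq:revised-suff}. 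The genuine differences lie in how the two ingredients are obtained. First, the paper derives \eqref{eq:revised-rk1} from Lemma~\ref{lem:error-def}, whose proof in Appendix~A goes through principal angles between complementary subspaces; your block decomposition of the orthogonal matrix $\tilde Q$, reading off the singular values of its $(r-1)\times(r-1)$ corner from $AA^\tp=I-\mathbf b\mathbf b^\tp$ and applying the trace--singular-value (von Neumann) bound with $d=\tilde Q_{rr}\geq 0$, is a shorter, self-contained proof of the same fact in the square case $r=n_i$, which is the only case in which Case~(i) is ever invoked. Second, you replace the paper's handling of the first summand of \eqref{eq:obj-k-exp} (a Cauchy--Schwarz argument that relies on positivity of $f$ along the iterates, inherited from the proof of Lemma~\ref{lem:lambda-k}) by the exact identity $f(\mathbb U_{i+1,[p]})-f(\mathbb U_{i,[p]})=2\Delta+\sum_{j}(\lambda^{i+1}_{j,[p]}-\lambda^{i}_{j,[p]})^2\geq 2\Delta$, which is cleaner and removes that dependence. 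Both modifications are sound, the constants match ($\tfrac{1}{2}(\tau-\epsilon)$ in Case (i), $\tfrac{\epsilon}{2}$ in Case (ii)), and the symmetry computation $S^{(i)}_{[p]}=H\big(\hat G^\tp G\,\Sigma\big)H^\tp$ with $\hat G^\tp G=\operatorname{diag}(1,\dots,1,\pm1)$ is exactly right.
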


\begin{proof}
The matrix $S^{(i)}_{[p]}$ is obviously symmetric by {a direct calculation. For the decreasing property, it is sufficient to prove the result for Case (i) in Algorithm~\ref{algo:revised}.}
{We} suppose that at iteration $p$ and $i$, Case (i) of Algorithm~\ref{algo:revised} is executed. {In this case,} $\mathbf g^{(i)}_r$ is totally determined (up to sign) by the first $(r-1)$ columns of the matrix $G^{(i)}_{[p]}$. It follows from Lemma~\ref{lem:error-def} and the choice of $\hat{\mathbf g}^{(i)}_r$ that
\begin{align}
\|\hat{\mathbf g}^{(i)}_r-(U^{(i)}_{[p-1]}H^{(i)}_{[p]})_r\| &=\min\{\|\mathbf g^{(i)}_r-(U^{(i)}_{[p-1]}H^{(i)}_{[p]})_r\|,\|\mathbf g^{(i)}_r+(U^{(i)}_{[p-1]}H^{(i)}_{[p]})_r\|\} \nonumber \\
&\leq \|(\hat G^{(i)}_{[p]})_1-(U^{(i)}_{[p-1]}H^{(i)}_{[p]})_1\|_F \nonumber \\
& =\|( G^{(i)}_{[p]})_1-(U^{(i)}_{[p-1]}H^{(i)}_{[p]})_1\|_F,\label{eq:revised-rk1}
\end{align}
where $(A)_1$ represents the $n_i\times (r-1)$ submatrix formed by the first $(r-1)$ columns of a given $n_i\times r$ matrix $A$. {Here since $\hat G^{(i)}_{[p]}$ is defined by replacing the last column of $G^{(i)}_{[p]}$ by $\hat{\mathbf g}^{(i)}_r$, we have $(\hat G^{(i)}_{[p]})_1=( G^{(i)}_{[p]})_1$, which immediately implies \eqref{eq:revised-rk1}.}

We then have
\begin{align}
\sum_{j=1}^r\lambda^{i-1}_{j,[p]}(\lambda^{i}_{j,[p]}-\lambda^{i-1}_{j,[p]})
&=\operatorname{Tr}((U^{(i)}_{[p]})^\tp V^{(i)}_{[p]}\Lambda^{(i)}_{[p]})
-\operatorname{Tr}((U^{(i)}_{[p-1]})^\tp V^{(i)}_{[p]}\Lambda^{(i)}_{[p]})\nonumber\\
&=\langle G^{(i)}_{[p]}\Sigma^{(i)}_{[p]}(H^{(i)}_{[p]})^\tp ,U^{(i)}_{[p]}-U^{(i)}_{[p-1]}\rangle\nonumber\\
&=\langle G^{(i)}_{[p]}\Sigma^{(i)}_{[p]},\hat G^{(i)}_{[p]}-U^{(i)}_{[p-1]}H^{(i)}_{[p]}\rangle\nonumber\\
&\geq \langle (G^{(i)}_{[p]})_1\tilde\Sigma^{(i)}_{[p]}, (\hat G^{(i)}_{[p]})_1-(U^{(i)}_{[p-1]}H^{(i)}_{[p]})_1\rangle-\epsilon|\langle \hat{\mathbf g}^{(i)}_r,\hat{\mathbf g}^{(i)}_r-(U^{(i)}_{[p-1]}H^{(i)}_{[p]})_r\rangle|\nonumber\\
&\geq \tau\|(\hat G^{(i)}_{[p]})_1-(U^{(i)}_{[p-1]}H^{(i)}_{[p]})_1\|_F^2-\epsilon\|\hat{\mathbf g}^{(i)}_r-(U^{(i)}_{[p-1]}H^{(i)}_{[p]})_r\|^2\nonumber\\
&\geq (\tau-\epsilon)\|(\hat G^{(i)}_{[p]})_1-(U^{(i)}_{[p-1]}H^{(i)}_{[p]})_1\|_F^2\nonumber\\
&\geq \frac{1}{2}(\tau-\epsilon)\|\hat G^{(i)}_{[p]}-U^{(i)}_{[p-1]}H^{(i)}_{[p]}\|_F^2\nonumber\\
&=\frac{1}{2}(\tau-\epsilon)\|U^{(i)}_{[p]}-U^{(i)}_{[p-1]}\|_F^2,\label{eq:revised-suff}
\end{align}
where $\tilde\Sigma^{(i)}_{[p]}$ is the $(r-1)\times (r-1)$ leading principal submatrix of $\Sigma^{(i)}_{[p]}$, the first inequality follows from $\sigma^{(i)}_{r,[p]}<\epsilon$, the second inequality follows from $\sigma^{(i)}_{r-1,[p]}\geq\tau$, the last two inequalities both follow from \eqref{eq:revised-rk1}.
With \eqref{eq:revised-suff}, {the rest of the proof is the same as that of Lemma~\ref{lem:lambda-k} and the conclusion follows.}
\end{proof}
With Lemma~\ref{lem:relative-decrease}, all the convergence results established in Section~\ref{sec:APPDT} hold as well.

\begin{theorem}[Regular KKT point]\label{thm:regular}
Let $\mathcal A$ be a generic tensor. If $\mathbb U$ is a local maximizer of problem \eqref{eq:sota-max} {where each entry of $\operatorname{Diag}(\Upsilon)$ is nonzero,} then $(\mathbb U, \operatorname{Diag}(\Upsilon))$ is a nondegenerate critical point of {$g$ defined in \eqref{eq:objective-p}} and $\mathbb U$ is a regular KKT point of problem \eqref{eq:sota-max}.
\end{theorem}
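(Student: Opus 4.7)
The plan is to obtain both assertions by combining a genericity argument with a second-order analysis on each individual Stiefel factor.

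For the first assertion, I would use Proposition~\ref{prop:sota-max} together with the hypothesis that every entry of $\operatorname{Diag}(\Upsilon)$ is nonzero to conclude that $(\mathbb U,\mathbf x)$, where $\mathbf x:=\operatorname{Diag}(\Upsilon)\in\mathbb R^r_*$, is a local minimizer of $g$ on the open submanifold $U_{\mathbf n,r}$. Proposition~\ref{prop:critical-equivalence} then identifies $(\mathbb U,\mathbf x)$ as a critical point of $g$ on $U_{\mathbf n,r}$, and the genericity of $\mathcal A$ combined with Proposition~\ref{prop:nondegnerate} upgrades this critical point to a nondegenerate one. By Definition~\ref{def:nondegenerate} this is exactly the statement that $(\mathbb U,\operatorname{Diag}(\Upsilon))$ is a nondegenerate KKT point of \eqref{eq:sota}.

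For regularity, I would exploit the second-order necessary condition at this nondegenerate local minimum. Freezing $\mathbf x$ and all factors $U^{(j)}$ with $j\neq i$, the restriction of $g$ to the subspace of $U^{(i)}$-directions is, up to an additive constant, equal to $-\operatorname{tr}((U^{(i)})^\tp V^{(i)}\Lambda)$. A Taylor expansion along any tangent vector $Z\in T_{U^{(i)}}V(r,n_i)$ via the Stiefel retraction $U^{(i)}+tZ-\tfrac{t^2}{2}U^{(i)}Z^\tp Z+O(t^3)$ yields the restricted Hessian quadratic form $Z\mapsto \operatorname{tr}(Z^\tp Z P_i)$, where $P_i=(U^{(i)})^\tp V^{(i)}\Lambda$ is the symmetric Lagrange multiplier from \eqref{eq:kkt}; the first-order term vanishes because $Z^\tp U^{(i)}$ is antisymmetric while $P_i$ is symmetric. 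Since nondegeneracy makes the full Hessian of $g$ on $U_{\mathbf n,r}$ positive definite, its restriction to this subspace is positive definite as well, so $\operatorname{tr}(Z^\tp Z P_i)>0$ for every nonzero tangent vector $Z$.

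The remaining step is a linear-algebraic decoding. I would decompose $Z=U^{(i)}A+U^{(i)}_\perp B$ with $A\in\mathbb R^{r\times r}$ antisymmetric (the tangent condition) and $B\in\mathbb R^{(n_i-r)\times r}$ free; orthogonality yields $Z^\tp Z=A^\tp A+B^\tp B$. In the rectangular case $n_i>r$, choosing $A=0$ and a rank-one $B=\mathbf e_\alpha \mathbf v^\tp$ reduces the positivity to $\mathbf v^\tp P_i\mathbf v>0$ for all $\mathbf v\neq 0$, so $P_i$ is positive definite and $\operatorname{rank}(P_i)=r=\min(r,n_i-1)$. In the square case $n_i=r$, only the antisymmetric piece survives; orthogonally diagonalizing $P_i=Q\operatorname{diag}(p_1,\ldots,p_r)Q^\tp$ and setting $\widetilde A:=Q^\tp A Q$ produces
\[
\operatorname{tr}(Z^\tp Z P_i)=\operatorname{tr}(A^\tp A P_i)=\sum_{j<\ell}(p_j+p_\ell)\widetilde A_{j\ell}^{\,2},
\]
so positivity for all nonzero antisymmetric $A$ is equivalent to $p_j+p_\ell>0$ for every $j<\ell$. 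This precludes $P_i$ from having two zero eigenvalues, giving $\operatorname{rank}(P_i)\geq r-1=n_i-1=\min(r,n_i-1)$. Since $U^{(i)}$ has orthonormal columns, in either case $\operatorname{rank}(V^{(i)}\Lambda)=\operatorname{rank}(P_i)\geq\min(r,n_i-1)$, which is precisely Definition~\ref{def:regular} of a regular KKT point.

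The hard part will be the square case $n_i=r$, where the complement directions $B$ are unavailable and positive-definiteness of $P_i$ cannot be asserted; one must pass through the antisymmetric bilinear-form identity above to squeeze out the weaker bound $\operatorname{rank}(P_i)\geq r-1$ that the regularity definition demands. Notably, the genericity hypothesis itself is used only once, in the first step, to invoke Proposition~\ref{prop:nondegnerate}; the rank lower bound is a purely local consequence of nondegenerate second-order optimality.
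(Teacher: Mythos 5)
Your first step coincides with the paper's: both pass to $(\mathbb U,\mathbf x)\in U_{\mathbf n,r}$ using that every entry of $\operatorname{Diag}(\Upsilon)$ is nonzero, identify the point as a critical point of $g$, and invoke Proposition~\ref{prop:nondegnerate} to upgrade it to a nondegenerate one. For the regularity claim, however, you take a genuinely different and equally valid route. The paper argues by contradiction from \emph{isolatedness}: if $\operatorname{rank}(V^{(i)}\Lambda)=s<\min\{r,n_i-1\}$, the polar orthonormal factor of $V^{(i)}\Lambda$ is non-unique along a positive-dimensional family (parametrized by the choice of $U_2$ and $Q\in\O(r-s)$), every member of which leaves $g$ unchanged, hence remains a local minimizer and a critical point arbitrarily close to $(\mathbb U,\mathbf x)$, contradicting Lemma~\ref{lem:isolated critical points}. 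Your argument instead extracts the rank bound from the \emph{second-order} condition: positive definiteness of the Hessian at the nondegenerate local minimum forces $\operatorname{tr}(Z^\tp Z P_i)>0$ on the $i$-th Stiefel slice, which you correctly decode as $P_i\succ 0$ when $n_i>r$ (via normal directions $U^{(i)}_\perp B$) and as $p_j+p_\ell>0$ for all $j<\ell$ when $n_i=r$ (via the antisymmetric identity), giving $\operatorname{rank}(V^{(i)}\Lambda)=\operatorname{rank}(P_i)\ge\min\{r,n_i-1\}$ in both cases. Both proofs use genericity only through Proposition~\ref{prop:nondegnerate}; the paper's is softer (it needs only that nondegenerate critical points are isolated), while yours is more quantitative and additionally delivers sign information on the multiplier, which is the sharp second-order characterization of local maximizers of $\langle V^{(i)}\Lambda,\cdot\rangle$ on $V(r,n_i)$. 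Two routine points should be made explicit in a final writeup: (i) the local, not merely global, version of the equivalence in Proposition~\ref{prop:sota-max} is what converts a local maximizer of \eqref{eq:sota-max} into a local minimizer of $g$ on $U_{\mathbf n,r}$; and (ii) at a critical point the restriction of the Riemannian Hessian of $g$ to a single Stiefel factor agrees with the Hessian of the restricted function, so the slice computation is legitimate.
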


\begin{proof}
If $\operatorname{Diag}(\Upsilon)$ is a vector with {all nonzero components}, then {we must have} $(\mathbb U, \operatorname{Diag}(\Upsilon))\in U_{\mathbf n,r}$. Moreover, Proposition~\ref{prop:critical-equivalence} implies that $(\mathbb U, \operatorname{Diag}(\Upsilon))$ is a critical point of $g$ and since $g$ is a Morse function {on} $U_{\mathbf n,r}$ for a generic tensor $\mathcal A$ by Proposition~\ref{prop:nondegnerate}, it is actually a nondegenerate critical point. {According to Lemma~\ref{lem:isolated critical points}, we may conclude that $(\mathbb U, \operatorname{Diag}(\Upsilon))$ is isolated, i.e., $g$ has no other non-degenerate critical point near $(\mathbb U, \operatorname{Diag}(\Upsilon))$.}

In the following, we will {prove} that the matrix $V^{(i)}\Lambda$ defined {by} \eqref{eq:critial} and \eqref{eq:lambda-tensor} is of rank at least $\min\{r,n_i-1\}$ for all $i=1,\dots,k$. {Suppose on the contrary that there exists some $i\in \{1,\dots,k\}$ such that the matrix $V^{(i)}\Lambda$ has rank}
$s<\min\{r,n_i-1\}$. We consider the singular value decomposition of $V^{(i)}\Lambda$
\[
V^{(i)}\Lambda=U\Sigma V^\tp
\]
with $U:=[U_1\ U_2]\in V(r,n_i)$, $U_1\in\mathbb R^{n_i\times s}$, $\Sigma\in\mathbb R^{r\times r}$, $V=[V_1\ V_2]\in \O(r)$ and $V_1\in\mathbb R^{r\times s}$. Hence
\[
V^{(i)}\Lambda=(UV^\tp )(V\Sigma V^\tp )
\]
is a polar decomposition of $V^{(i)}\Lambda$ with the polar orthonormal factor matrix $UV^\tp $. Since the rank of $V^{(i)}\Lambda$ is $s<\min\{r,n_i-1\}$, the polar decomposition of the matrix $V^{(i)}\Lambda$ is not unique and has the form
\[
V^{(i)}\Lambda=P(V\Sigma V^\tp ),
\]
where
\[
P=U_1V_1^\tp +U_2QV_2^\tp \in V(r,n_i) \ \text{for some}\ Q\in \O(r-s).
\]

Since $\min\{r,n_i-1\}>s$, {we must have $n_i-s\geq 2$ and this implies that $U_2$} can be chosen from the following set
\[
C:=\{W_2\in  {\mathbb R^{n_i\times (r-s)} }\colon [U_1\ W_2 ]\in V(r,n_i)\}.
\]
{
Since $U_1$ is a fixed element in $V(s,n_i)$, $C$ is isomorphic to $V(r-s,n_i -s)$ and hence $C\subseteq \mathbb{R}^{n_i \times (r-s)}$ is an irreducible closed subvariety of dimension
\[
\dim C = \frac{1}{2}(r-s)\left( (n_i  - r)  + (n_i -s -1) \right)\ge 1.
\]
}
Therefore, in {any small} neighborhood of $P$, there exists an {orthonormal} matrix $\tilde P$ such that $\tilde P$ also gives a polar orthonormal factor matrix of $V^{(i)}\Lambda$. Now if we fix the other $U^{(i)}$'s and $\Upsilon$, then
\begin{align*}
g((U^{(1)},\dots,U^{(i-1)},\tilde P,U^{(i+1)},\dots,U^{(k)}),\operatorname{Diag}(\Lambda))&=\|\mathcal A\|^2-2\langle V^{(i)}\Lambda,\tilde P\rangle+\|\Upsilon\|^2\\
&=\|\mathcal A\|^2-2\langle V^{(i)}\Lambda,P\rangle+\|\Upsilon\|^2,
\end{align*}
where the second equality follows from the fact that $\tilde P$ is also a polar orthonormal factor matrix of $V^{(i)}\Lambda$. {Therefore, $g$ is constant at such a point
\[
((U^{(1)},\dots,U^{(i-1)},\tilde P,U^{(i+1)},\dots,U^{(k)}),\operatorname{Diag}(\Upsilon)).
\]
Since $(\mathbb U,\Upsilon)$ is a local minimizer of \eqref{eq:sota} {(or equivalently, $\mathbb U$ is a local maximizer of \eqref{eq:sota-max})}, we may conclude that
{such a point} is also a local minimizer of \eqref{eq:sota}. In particular, each such point
corresponds to a critical point of $g$ on $U_{\mathbf n,r}$ by Proposition~\ref{prop:licq} and Proposition~\ref{prop:critical-equivalence}. However, {this contradicts to} the fact that $(\mathbb U,\operatorname{Diag}(\Upsilon))$ is an isolated critical point of $g$ on $U_{\mathbf n,r}$.}
\end{proof}

\begin{corollary}\label{cor:proximal}
For a generic tensor $\mathcal A$, {there exist $\tau > \epsilon>0$ such that if Substep 2 in Algorithm~\ref{algo} is replaced by Algorithm~\ref{algo:revised},} then the proximal step (i.e., Case (ii) in Algorithm~\ref{algo:revised}) will only be executed finitely many times if the {algorithm converges} to a local maximizer of \eqref{eq:sota-max}.
\end{corollary}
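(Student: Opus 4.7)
The plan is to combine the regularity of the limit point given by Theorem~\ref{thm:regular} with a continuity argument on singular values of $V^{(i)}_{[p]}\Lambda^{(i)}_{[p]}$, while using the finiteness from Theorem~\ref{thm:kkt} to guarantee that suitable parameters $\tau,\epsilon$ can be fixed a priori (depending only on $\mathcal A$). Suppose $\{\mathbb U_{[p]}\}$ converges to a local maximizer $\mathbb U^*$. By Proposition~\ref{prop:positive}, only finitely many truncation iterations occur, so $\mathbb U^*$ is a local maximizer of mLROTA($s$) for some stabilized rank $s \le r$. Moreover, past the last truncation the truncation criterion fails, i.e.\ $\lambda^0_{j,[p+1]} \ge \kappa$ for every $1 \le j \le s$, and passing to the limit yields $\lambda_j(\mathbb U^*) \ge \kappa > 0$. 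Thus $\operatorname{Diag}(\Upsilon^*)$ has all entries nonzero, and Theorem~\ref{thm:regular} applies: for generic $\mathcal A$, the limit $\mathbb U^*$ is a regular KKT point, so $V^{(i)}_*\Lambda^*$ has rank at least $\min\{s, n_i - 1\}$ for every $1 \le i \le k$, equivalently $\sigma^{(*,i)}_{\min\{s, n_i - 1\}} > 0$.

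To fix $\tau$ and $\epsilon$ a priori, I would enumerate the possible limits. By Theorem~\ref{thm:kkt}, for each $1 \le s \le r$ the set of primitive KKT points of mLROTA($s$) is finite, so the collection $\mathcal M$ of all local maximizers with all $\lambda_j$ nonzero, across all admissible $s$, is finite. Each $\mathbb U^* \in \mathcal M$ is regular by Theorem~\ref{thm:regular}, and therefore
\[
\delta \;\coloneqq\; \min_{\mathbb U^* \in \mathcal M,\, 1 \le i \le k}\, \sigma^{(*,i)}_{\min\{s(\mathbb U^*),\, n_i - 1\}} \;>\; 0.
\]
I then choose any $\tau,\epsilon$ with $0 < \epsilon < \tau < \delta$; note that this choice depends only on $\mathcal A$ and not on which particular local maximizer the algorithm happens to reach.

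With these parameters fixed, continuity of singular values under the convergence $V^{(i)}_{[p]}\Lambda^{(i)}_{[p]} \to V^{(i)}_*\Lambda^*$ guarantees, for all sufficiently large $p$, that $\sigma^{(i)}_{\min\{s, n_i - 1\},[p]} > \tau$. A case analysis then finishes the argument. If $s \le n_i - 1$, then $\min\{s, n_i - 1\} = s$, so $\sigma^{(i)}_{s,[p]} > \tau > \epsilon$ and the proximal correction is not triggered at all. If $s = n_i$, then $\min\{s, n_i - 1\} = s - 1$, so $\sigma^{(i)}_{s-1,[p]} > \tau$; whenever the trigger condition $\sigma^{(i)}_{s,[p]} < \epsilon$ occurs, the hypotheses $s = n_i$ and $\sigma^{(i)}_{s-1,[p]} \ge \tau$ are met, so Case~(i) of Algorithm~\ref{algo:revised} rather than Case~(ii) is executed. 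In either situation Case~(ii) is invoked only finitely many times. The main obstacle is establishing that the uniform lower bound $\delta$ is strictly positive across all possible limits simultaneously, which is exactly where the generic finiteness from Theorem~\ref{thm:kkt} combined with the regularity from Theorem~\ref{thm:regular} becomes indispensable.
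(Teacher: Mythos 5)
Your proposal is correct and follows essentially the same route as the paper's proof: finiteness of primitive KKT points (Theorem~\ref{thm:kkt}) gives a uniform positive lower bound on the relevant singular values of the $V^{(i)}\Lambda$'s at all possible limit points, one chooses $0<\epsilon<\tau$ below that bound, the truncation step forces all $\lambda_j(\mathbb U^*)\neq 0$ so Theorem~\ref{thm:regular} yields regularity of the limit, and continuity of singular values plus the case split $s\le n_i-1$ versus $s=n_i$ shows Case~(ii) is eventually never triggered. Your write-up merely makes explicit the quantity $\delta$ and the continuity argument that the paper leaves implicit.
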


\begin{proof}
For a generic tensor, there are finitely many essential KKT points whose corresponding primitive KKT points are all nondegenerate by Theorem~\ref{thm:kkt}. Therefore, there exists a constant $\tau>0$ such that it is strictly smaller than the smallest positive singular values of all $V^{(i)}\Lambda$'s {determined} by these primitive KKT points.

{We take $0< \epsilon<\tau$ and let $\{\mathbb U_{[p]}\}$ be a sequence generated by the modified algorithm which converges to $\mathbb U^*$. Note that the convergence is guaranteed by Lemma~\ref{lem:relative-decrease} and results in Section~\ref{sec:APPDT}. Let $\Lambda^*$ be the limit of $\Lambda^{(i)}_{[p]}$ defined in \eqref{eq:lambda-orth} and $V^{(*,i)}$ the limit of $V^{(i)}_{[p]}$ defined in \eqref{eq:marixv}. The truncation iteration ensures that $\operatorname{Diag}(\Lambda^*)$ is a vector with each component nonzero.}
Thus, by Theorem~\ref{thm:regular}, the limit point $\mathbb U^*$ is a regular KKT point of \eqref{eq:sota-max}. Consequently, the rank of the matrix $V^{(*,i)}\Lambda^*$ is either of full rank and hence the proximal correction step will not be executed by the choice of $\epsilon$ and $\tau$, or the rank of the matrix $V^{(*,i)}\Lambda^*$ is of rank $(r-1)$ when $r=n_i$, in which case Case (i) in Algorithm~\ref{algo:revised} will be executed by the choice of $\epsilon$ and $\tau$. Therefore, {after finitely many iterations}, Case (ii) in Algorithm~\ref{algo:revised} will not be executed.
\end{proof}
\subsection{Truncation}\label{sec:truncation}
In this subsection, {we will prove that for almost all LROTA problems, local minimizers {of \eqref{eq:sota}} are actually contained in the manifold $D(\mathbf n,r)$}. Therefore, if Algorithm~\ref{algo} converges to a local minimizer of \eqref{eq:sota}, we can choose a suitable $\kappa>0$ such that the truncation step (i.e., Step 2) in Algorithm~\ref{algo} is unnecessary.

\begin{theorem}\label{thm:minimizer}
If the sequence $\mathbf{n} = (n_1,\dots, n_k)$ and the positive integer $r\le \min\{n_1,\dots, n_k\}$ satisfies the relation
\begin{equation}\label{eqn:thm:minimizer:condition}
d_{\mathbf{n},r-1} < \prod_{i=1}^k (n_i - r+1),
\end{equation}
where $d_{\mathbf{n},r-1} \coloneqq (r-1)\left[\sum_{i=1}^kn_i-\frac{kr}{2}+1\right]$, then for a generic $\mathcal{A}\in \mathbb{R}^{n_1 \times \cdots \times n_k}$, each local minimizer of problem \eqref{eq:sota} is of the form $(U^{(1)}, \dots, U^{(k)},(\lambda_1,\dots,\lambda_r)) \in \V(r,n_1)\times \cdots \times \V(r,n_k)\times\mathbb R^r$ such that
\[
(U^{(1)}, \dots, U^{(k)})  \cdot \operatorname{diag}(\lambda_1,\dots, \lambda_r) \in D(\mathbf{n},r).
\]
\end{theorem}
\begin{proof}
We consider the subset $Z \subseteq \mathbb{R}^{n_1 \times \cdots \times n_k}$ consisting of tensors of the form
\[
(V^{(1)},\dots, V^{(k)}) \cdot \operatorname{diag}(\mu_1,\dots, \mu_{r-1}) + \mathcal{X}.
\]
Here $V^{(i)} \in \V(r-1,n_i)$, $\mu_j \in \mathbb{R}$, and $\mathcal{X}$ is a linear combination of decomposable tensors $\mathbf v_1 \otimes \cdots \otimes \mathbf v_k$ where for each $i=1,\dots, k$,
\begin{enumerate}
\item $\mathbf v_i\in \mathbb{R}^{n_i}$ is a unit norm vector;
\item if $\mathbf v_i$ is not a column vector of $V^{(i)}$, then $\mathbf v_i^\tp V^{(i)} = \mathbf 0$;
\item and there exists some $1\le j \le k$ such that $\mathbf v_{j}$ is a column vector of $V^{(j)}$.
\end{enumerate}

We notice that
\[
(V^{(1)},\dots, V^{(k)}) \cdot \operatorname{diag}(\mu_1,\dots, \mu_{r-1}) \in C(\mathbf{n},r-1)
\]
and $\mathcal{X}$ is contained in a vector space of dimension at most
\[
\prod_{i=1}^k n_i - \prod_{i=1}^k (n_i - r+1).
\]
This implies that the dimension of $Z$ is bounded above by
\[
\dim C(\mathbf{n},r-1) + \left( \prod_{i=1}^k n_i - \prod_{i=1}^k (n_i - r + 1) \right) = d_{\mathbf{n},r-1} +  \left( \prod_{i=1}^k n_i - \prod_{i=1}^k (n_i - r + 1) \right),
\]
since $\dim C(\mathbf{n},r-1) = d_{\mathbf{n},r-1}$ by Proposition~\ref{prop:smooth}.
In particular, we have
\[
\dim \overline{Z} = \dim Z < \prod_{i=1} n_i,
\]
where $\overline{Z}$ is the Zariski closure of $Z$.
Next we suppose that $\mathcal{A} \in U \coloneqq \mathbb{R}^{n_1 \times \cdots \times n_k} \setminus \overline{Z}$ and there exist $(V^{(1)},\dots,V^{(k)}) \in \V(r-1,n_1) \times \cdots \times \V(r-1,n_k)$ and $(\mu_1,\dots, \mu_{r-1})\in \mathbb{R}^{r-1}$ such that $(V^{(1)},\dots,V^{(k)},(\mu_1,\dots, \mu_{r-1}))$ is a local minimizer of \eqref{eq:sota}. We can write
\[
\mathcal{X} \coloneqq \mathcal{A} - (V^{(1)},\dots, V^{(k)})\cdot \operatorname{diag}(\mu_1,\dots, \mu_{r-1})
\]
as a linear combination of decomposable tensors $\mathbf v_1 \otimes \cdots \otimes \mathbf v_k$, such that
\begin{enumerate}
\item $\mathbf v_i\in \mathbb{R}^{n_i}$ is a unit norm vector;
\label{item:thm:minimizer:1}
\item either of the following occurs:
\begin{enumerate}
\item for each $i=1,\dots, k$, $\mathbf v_i$ is a column vector of $V^{(i)}$;
\label{item:thm:minimizer:2a}
\item for each $i=1,\dots, k$, $\mathbf v_i^\tp V^{(i)} = \mathbf 0$.
\label{item:thm:minimizer:2b}
\end{enumerate}
\end{enumerate}
According to the choice of $\mathcal{A}$, there exists $\mathbf v_1 \otimes \cdots \otimes \mathbf v_k$ satisfying \eqref{item:thm:minimizer:1} and \eqref{item:thm:minimizer:2b} such that
\[
\langle \mathcal{X}, \mathbf v_1 \otimes \cdots \otimes \mathbf v_k \rangle \ne 0.
\]
Now we set
\[
\mathcal{Y}\coloneqq (V^{(1)},\dots, V^{(k)})\cdot \operatorname{diag}(\mu_1,\dots, \mu_{r-1}) + \epsilon  \mathbf v_1 \otimes \cdots \mathbf v_k\in D(\mathbf{n},r),
\]
for a sufficiently small positive number $\epsilon$. We have
\[
\lVert \mathcal{A} - \mathcal{Y} \rVert = \lVert \mathcal{X} - \epsilon \mathbf v_1 \otimes \cdots \otimes \mathbf v_k \rVert < \lVert \mathcal{X} \rVert.
\]
This contradicts the assumption that $(V^{(1)},\dots,V^{(k)},(\mu_1,\dots, \mu_{r-1}))$ is a local minimizer of problem \eqref{eq:sota}.
\end{proof}

As a special case, we suppose that $n_1 = \cdots = n_k = n$ so \eqref{eqn:thm:minimizer:condition} is written as
\[
\left(  r-1 \right) \left( k (n - \frac{r}{2}) + 1 \right) < (n- r + 1)^k.
\]
We set $r - 1 = (1 -\alpha) n$ for $\alpha \in [\frac{1}{n},1]$, hence we have
\[
(1 -\alpha) n \left( \frac{kn(1+\alpha)}{2} + 1 - \frac{k}{2} \right) <  \alpha^k n^k.
\]
Therefore, to guarantee \eqref{eqn:thm:minimizer:condition} in this case, it is sufficient to require
\begin{equation}\label{eqn:sufficient condition}
  \frac{2   n^{k-2}}{k} \alpha^k + \alpha^2 - 1 > 0.
\end{equation}

\begin{corollary}
If $n_1 = \cdots = n_k = n$ and {
\[
1 \le r  \le \left( 1 - \left( \frac{k}{2   n^{k-2}} \right)^{\frac{1}{k}} \right)n + 1,
\]}
then for a generic $\mathcal{A}\in \mathbb{R}^{n \times \cdots \times n}$, any local minimizer of the problem \eqref{eq:sota} lies in $D(\mathbf n,r)$ (or equivalently $U_{\mathbf n,r}$). In particular, for any fixed $k$ {and $r$, there exists $n_0$ such that whenever $n \ge n_0$ and $\mathcal{A}\in \mathbb{R}^{n \times \cdots \times n}$ is generic, any local minimizer of problem \eqref{eq:sota} lies in $D(\mathbf n,r)$.}
\end{corollary}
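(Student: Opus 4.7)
The proof reduces to verifying that the hypothesis on $r$ implies the sufficient condition \eqref{eqn:sufficient condition}, after which Theorem~\ref{thm:minimizer} can be invoked directly. The plan is as follows.

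First, I would reduce the cube case $n_1 = \cdots = n_k = n$ to a scalar inequality in the parameter $\alpha \in [1/n, 1]$ defined by $r - 1 = (1-\alpha)n$. As the paper has already observed, in this parametrization the condition \eqref{eqn:thm:minimizer:condition} of Theorem~\ref{thm:minimizer} is implied by \eqref{eqn:sufficient condition}, namely
\[
\frac{2 n^{k-2}}{k} \alpha^k + \alpha^2 - 1 > 0.
\]
So it suffices to show that the hypothesis $r \le \bigl(1 - (k/(2n^{k-2}))^{1/k}\bigr) n + 1$ forces $\alpha \ge c$, where $c \coloneqq (k/(2n^{k-2}))^{1/k}$.

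Next, the translation is immediate: the hypothesis gives $r - 1 \le (1-c)n$, i.e., $(1-\alpha)n \le (1-c)n$, hence $\alpha \ge c$. Raising this to the $k$-th power yields $\alpha^k \ge k/(2n^{k-2})$, which is precisely
\[
\frac{2n^{k-2}}{k}\alpha^k \ge 1.
\]
Adding $\alpha^2 > 0$ to both sides and subtracting $1$ yields
\[
\frac{2 n^{k-2}}{k} \alpha^k + \alpha^2 - 1 \ge \alpha^2 > 0,
\]
which is exactly \eqref{eqn:sufficient condition}. By the discussion preceding \eqref{eqn:sufficient condition}, this implies \eqref{eqn:thm:minimizer:condition} holds for the cube case, and Theorem~\ref{thm:minimizer} applies. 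For a generic $\mathcal A \in \mathbb R^{n \times \cdots \times n}$, every local minimizer of \eqref{eq:sota} is then of the form $(U^{(1)},\dots,U^{(k)},(\lambda_1,\dots,\lambda_r))$ with $(U^{(1)},\dots,U^{(k)}) \cdot \operatorname{diag}(\lambda_1,\dots,\lambda_r) \in D(\mathbf n, r)$, or equivalently its parameter tuple lies in $U_{\mathbf n, r}$.

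For the asymptotic statement, I would fix $k \ge 3$ and a positive integer $r$, and observe that since $k - 2 \ge 1$, we have $(k/(2n^{k-2}))^{1/k} \to 0$ as $n \to \infty$. Therefore the right-hand side $\bigl(1 - (k/(2n^{k-2}))^{1/k}\bigr)n + 1$ tends to infinity, and so there exists $n_0$ such that for all $n \ge n_0$ the inequality $r \le \bigl(1 - (k/(2n^{k-2}))^{1/k}\bigr)n + 1$ holds, whence the first part of the corollary applies. I do not foresee a substantive obstacle: the entire content is a one-line algebraic manipulation combined with the already-established Theorem~\ref{thm:minimizer} and the sufficient condition \eqref{eqn:sufficient condition}; the only point requiring a minute of care is ensuring strict inequality in \eqref{eqn:sufficient condition}, which is guaranteed by the nonvanishing of $\alpha^2$.
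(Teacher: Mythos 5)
Your proof is correct and follows essentially the same route as the paper: translating the hypothesis on $r$ into the bound $\alpha \ge \left(k/(2n^{k-2})\right)^{1/k}$, verifying the sufficient condition \eqref{eqn:sufficient condition} (with strict inequality coming from $\alpha^2>0$), and invoking Theorem~\ref{thm:minimizer}, with the asymptotic claim handled identically via the growth of $\bigl(1-(k/(2n^{k-2}))^{1/k}\bigr)n+1$. You in fact supply slightly more detail than the paper, which merely asserts that \eqref{eqn:sufficient condition} holds for $\alpha$ in the stated range.
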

\begin{proof}
We observe that for any $(k/2n^{k-2})^{1/k} \le  \alpha \le 1  $, \eqref{eqn:sufficient condition} and hence \eqref{eqn:thm:minimizer:condition} is satisfied by $n_1 = \cdots = n_k = n$ and
\[
r = (1-\alpha)n + 1.
\]
This implies that for
\[
1 \le r  \le \left( 1 - \left( \frac{k}{2   n^{k-2}} \right)^{\frac{1}{k}} \right)n + 1,
\]
any local minimizer of the problem \eqref{eq:sota} lies in $D(\mathbf n,r)$ for a generic $\mathcal{A}$. In particular, if $k\ge 3$ is a fixed integer, then
{
\[
\lim_{n\to \infty}  \frac{ \left( 1 - \left( \frac{k}{2   n^{k-2}} \right)^{\frac{1}{k}} \right)n + 1}{n} = 1.
\]
This implies that there exists some integer $n_0$ such that for a generic $\mathcal{A}$ and any $n\ge n_0$, local minimizers of problem \eqref{eq:sota} must all lie in $D(\mathbf n,r)$.
}
\end{proof}

If \eqref{eqn:thm:minimizer:condition} is fulfilled,
Proposition~\ref{thm:minimizer} implies that all local maximizers of \eqref{eq:sota-max} for a generic tensor are in $D(\mathbf n,r)$. Recall from Theorem~\ref{thm:kkt} that these local maximizers are finite. Thus, we have the next corollary.
\begin{corollary}\label{cor:truncation}
Let {$n_1,\dots,n_k$} and $r$ be positive integers satisfying\eqref{eqn:thm:minimizer:condition}.
For a generic tensor $\mathcal A$, if Algorithm~\ref{algo} converges to a local maximizer, then the truncation step in Algorithm~\ref{algo} will not be executed when a suitable $\kappa$ is chosen.
\end{corollary}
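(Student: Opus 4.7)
The strategy combines the dimensional restriction of Theorem~\ref{thm:minimizer} with the finiteness supplied by Theorem~\ref{thm:kkt}. First I would invoke Proposition~\ref{prop:sota-max} to identify every local maximizer $\mathbb U^*$ of \eqref{eq:sota-max} with a local minimizer $(\mathbb U^*,\Upsilon^*)$ of \eqref{eq:sota}, where the diagonal of $\Upsilon^*$ is given by $\lambda_j(\mathbb U^*)=\mathcal A\tau(\mathbf x_j^*)$. Under the standing assumption \eqref{eqn:thm:minimizer:condition}, Theorem~\ref{thm:minimizer} then forces the corresponding tensor to lie in $D(\mathbf n,r)$, so that every $\lambda_j(\mathbb U^*)$ is nonzero for $1\le j\le r$.

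Next, since $\mathcal A$ is generic, Theorem~\ref{thm:kkt} yields only finitely many essential KKT points, and in particular finitely many local maximizers of \eqref{eq:sota-max}. I would therefore set
\[
\mu \coloneqq \min_{\mathbb U^*}\min_{1\le j\le r}|\lambda_j(\mathbb U^*)|,
\]
with the outer minimum taken over this finite set, so that $\mu>0$. I would then pick $\kappa$ with $0<\kappa<\min\{\mu,\sqrt{f(\mathbb U_{[0]})/r}\}$, which is compatible with Step~0 of Algorithm~\ref{algo} and can be fixed a priori from problem data.

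With this $\kappa$, I claim that no truncation ever executes. The asymptotic part follows from Theorem~\ref{thm:global}: $\mathbb U_{[p]}\to \mathbb U^*$ implies $\lambda^0_{j,[p+1]}\to\lambda_j(\mathbb U^*)$, hence $|\lambda^0_{j,[p+1]}|>\kappa$ for every $j$ once $p$ is large enough. To rule out a truncation at an earlier step $p_0$, I would argue by contradiction. If Step~2 fired at $p_0$, then from $p_0$ onwards the iterates are confined to a product of Stiefel manifolds of strictly smaller frame dimension $r'<r$, and the algorithm never reintroduces the truncated columns. Embedding the eventual limit back into the original product by padding with zero columns would then produce a limit point whose $\lambda_{j}$ vanishes for at least one index, contradicting the conclusion of Theorem~\ref{thm:minimizer} that a local maximizer of \eqref{eq:sota-max} must lie in $D(\mathbf n,r)$.

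The principal subtlety is the \emph{a priori} nature of $\kappa$: the lower bound $\mu$ depends only on $\mathcal A$ and the finite list of local maximizers, not on the run of the algorithm, so the choice is legitimate. A secondary delicate point is to keep the hypothesis "converges to a local maximizer" anchored in the original ambient $r$-parametrization, since that is precisely the setting to which Theorem~\ref{thm:minimizer} applies; once that is made explicit, the padding argument in the previous paragraph converts any hypothetical truncation into a genuine contradiction.
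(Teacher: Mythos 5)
Your core argument is exactly the paper's: Theorem~\ref{thm:minimizer} (under \eqref{eqn:thm:minimizer:condition} and genericity) places every local maximizer of \eqref{eq:sota-max} in $D(\mathbf n,r)$, so all $\lambda_j(\mathbb U^*)\neq 0$; Theorem~\ref{thm:kkt} makes the set of such maximizers finite, giving a uniform $\mu>0$; choosing $\kappa<\mu$ then prevents the truncation criterion from firing near the limit. That is precisely how the paper derives the corollary, and this part of your proposal is fine.

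The one place your write-up goes beyond the paper --- the contradiction argument ruling out a truncation at an early iteration $p_0$ --- does not quite work as stated. Padding the reduced limit with zero columns does not produce a feasible point of \eqref{eq:sota-max}, since the factor matrices must be orthonormal; and even after padding with genuine orthonormal columns, a local maximizer of mLROTA($r'$) with $r'<r$ need not yield a local minimizer of the $r$-column problem \eqref{eq:sota}, so Theorem~\ref{thm:minimizer} cannot be invoked directly to produce the contradiction. (Indeed, the proof of Theorem~\ref{thm:minimizer} shows such a padded point is \emph{not} a local minimizer for generic $\mathcal A$, which is the opposite of what your contradiction needs as an input.) The paper sidesteps this by reading the hypothesis ``converges to a local maximizer'' as referring to the original $r$-column problem, which already rules out a frame-dimension change; so the early-iteration case is absorbed into the hypothesis rather than proved. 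Your instinct to address it is good, but the padding step would need to be replaced or the hypothesis interpreted as the paper does.
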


Combining Corollaries~\ref{cor:proximal} and \ref{cor:truncation}, we have the following conclusion.
\begin{proposition}\label{prop:iapd-apd}
For almost all LROTA problems, {there exist $\kappa$, $\epsilon$ and $\tau$ such that iAPD with Substep 2 being replaced by Algorithm~\ref{algo:revised} reduces to APD after finitely many iterations.}
\end{proposition}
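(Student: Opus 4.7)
The plan is to prove Proposition~\ref{prop:iapd-apd} as a direct synthesis of the two structural results already established, namely Corollary~\ref{cor:proximal} (regarding the proximal correction) and Corollary~\ref{cor:truncation} (regarding the truncation step). The first step is to make precise what ``almost all LROTA problems'' means in this setting. I will take it to mean those LROTA problems $(\mathbf n, r, \mathcal A)$ for which the dimension condition \eqref{eqn:thm:minimizer:condition} holds and $\mathcal A$ lies in the (nonempty) Zariski open subset on which Theorem~\ref{thm:kkt}, Theorem~\ref{thm:regular}, and Theorem~\ref{thm:minimizer} simultaneously apply. The intersection of finitely many Zariski open subsets is still Zariski open and dense, so this is indeed a generic condition on $\mathcal A$, and the condition \eqref{eqn:thm:minimizer:condition} on $(\mathbf n, r)$ determines the ``almost all'' in the discrete sense described after the theorem.

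Next, I would select the parameters in two stages. Using Theorem~\ref{thm:kkt}, there are only finitely many essential KKT points, all of whose primitive KKT points are nondegenerate; moreover by Theorem~\ref{thm:regular} each such limit point is a regular KKT point, so the matrices $V^{(*,i)}\Lambda^*$ are either of full rank or of rank $n_i-1$ with $r=n_i$. Let $\tau_0 > 0$ be any positive number strictly smaller than the minimum of all the smallest positive singular values of the matrices $V^{(*,i)}\Lambda^*$ across this finite set of essential KKT points. Then pick any $\tau$ and $\epsilon$ with $0 < \epsilon < \tau < \tau_0$; by Corollary~\ref{cor:proximal} (applied with this $\epsilon$ and $\tau$), whenever the modified algorithm converges to a local maximizer of \eqref{eq:sota-max}, Case (ii) of Algorithm~\ref{algo:revised} is executed at most finitely often.

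In parallel, by Corollary~\ref{cor:truncation} we may choose $\kappa > 0$ small enough (and still $\kappa < \sqrt{f(\mathbb U_{[0]})/r}$ as required for the initialization of Algorithm~\ref{algo}) so that once the iterates enter a sufficiently small neighborhood of the limit point, the test $\lambda^0_{j,[p+1]} < \kappa$ fails for every $j$; equivalently, Step~2 of Algorithm~\ref{algo} will cease to trigger. Because $\tau,\epsilon$ act only inside Substep~2 and $\kappa$ acts only inside Step~2, these choices are independent and can be made simultaneously. Putting everything together, for a generic $\mathcal A$ (and $(\mathbf n,r)$ satisfying \eqref{eqn:thm:minimizer:condition}) with parameters chosen as above, after finitely many iterations the iAPD with the revised proximal substep performs neither a proximal correction (Case (ii)) nor a truncation, which is precisely APD as proposed in \cite{GC-19}.

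The only genuine subtlety, and what I would flag as the main point to verify, is that the ``convergence to a local maximizer'' hypothesis of both Corollaries~\ref{cor:proximal} and \ref{cor:truncation} is what the phrase ``reduces to APD'' in the proposition is implicitly predicated upon; in fact the global convergence to some KKT point is already guaranteed by Theorem~\ref{thm:global} together with Lemma~\ref{lem:relative-decrease}, and local-maximality of the limit point is the standing assumption under which both corollaries were stated. Thus no extra argument is needed beyond assembling the two corollaries with compatible, genericity-robust parameter choices.
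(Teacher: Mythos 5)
Your proposal is correct and is essentially the paper's own argument: the paper proves Proposition~\ref{prop:iapd-apd} in one line by combining Corollaries~\ref{cor:proximal} and \ref{cor:truncation}, exactly as you do, with your additional remarks (the meaning of ``almost all'' via \eqref{eqn:thm:minimizer:condition} and genericity of $\mathcal A$, the independent choice of $\kappa$ versus $\epsilon,\tau$ using the finiteness of essential KKT points from Theorem~\ref{thm:kkt}, and the implicit ``converges to a local maximizer'' hypothesis) being faithful elaborations rather than a different route.
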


\section{Conclusions}\label{sec:final}
{In this paper, we propose} an alternating polar decomposition algorithm with adaptive proximal correction and truncation for approximating a given tensor by a low rank orthogonally decomposable tensor. {Without any assumption we prove that this algorithm has global convergence and overall sublinear convergence with a sub-optimal explicit convergence rate. For a generic tensor, this algorithm converges $R$-linearly without any further assumption. For the first time, the convergence rate analysis for the problem of low {rank} orthogonal tensor approximations is accomplished.}

{The discussion in Section~\ref{sec:discussion} is all about local maximizers of problem \eqref{eq:sota-max}.} Both {APD and iAPD are based} on the alternating minimization method \cite{B-99}, which is a variant of gradient ascend. In general, {such a method can only converge to a KKT point}, including local minimizer, saddle point and local maximizer \cite{B-99,Beck-book}. However, if each saddle point of a function has the \textit{strict saddle property}, which is guaranteed if it is a nondegenerate critical point, then with probability one, the gradient ascent method converges to a local maximizer \cite{LSJR-16}. Therefore, for a generic tensor, the proposed algorithm will return a strict local maximizer, since each KKT point is nondegenerate and the objective function is monotonically increasing. More theoretical investigations on this {are} interesting and important, {which will be our next project}.
\subsection*{Acknowledgement}
This work is partially supported by National Science Foundation of China (Grant No. 11771328). The first author is also partially supported Young Elite Scientists Sponsorship Program by Tianjin and the Natural Science Foundation of Zhejiang Province, China (Grant No. LD19A010002). The second author is also partially supported by National Science Foundation of China (Grant No.~11801548 and Grant No.~11688101), National Key R\&D Program of China (Grant No.~2018YFA0306702) and the Recruitment Program of Global Experts of China.


\appendix
\renewcommand{\appendixname}{Appendix~\Alph{section}}
\section{Properties on Orthonormal Matrices}\label{sec:polar}
\subsection{Polar decomposition}
In this section, we will {establish} an error bound analysis for the polar decomposition. For a positive semidefinite matrix {$H\in\operatorname{S}^{n}_+$, there exists a unique positive semidefinite matrix $P\in\operatorname{S}^{n}_+$} such that $P^2=H$. In the literature, this matrix $P$ is called the \textit{square root} of the matrix $H$ and denoted as $P=\sqrt{H}$. If $H=U\Sigma U^\tp $ is the eigenvalue decomposition of $H$, then we have $\sqrt{H}=U\sqrt{\Sigma}U^\tp $, where $\sqrt{\Sigma}$ is the diagonal matrix {whose diagonal elements are square roots of those of $\Sigma$.} The next result is classical, which can be found in \cite{H-86,GV-13}.
\begin{lemma}[Polar Decomposition]\label{lem:polar}
Let $A\in\mathbb R^{m\times n}$ with $m\geq n$. Then there exist an orthonormal matrix $U\in V(n,m)$ and a unique symmetric positive semidefinite matrix $H\in \operatorname{S}^{n}_+$ such that $A=UH$ and
\begin{equation}\label{eq:polar-optimal}
U\in\operatorname{argmax}\{\langle Q,A\rangle\colon Q\in V(n,m)\}.
\end{equation}
Moreover, if $A$ is of full rank, then the matrix $U$ is uniquely determined and $H$ is positive definite.
\end{lemma}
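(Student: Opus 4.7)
The plan is to reduce everything to the singular value decomposition (SVD) of $A$ and then invoke a trace inequality for the variational characterization.

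\emph{Step 1 (Construction).} I would take a thin SVD $A = G\Sigma V^\tp$ with $G\in V(n,m)$, $V\in\O(n)$, and $\Sigma = \operatorname{diag}(\sigma_1,\ldots,\sigma_n)$, $\sigma_1\ge\cdots\ge\sigma_n\ge 0$. Setting $U := GV^\tp$ and $H := V\Sigma V^\tp$, a direct computation gives $U^\tp U = V G^\tp G V^\tp = I$, so $U\in V(n,m)$, while $H = H^\tp\succeq 0$ and $UH = G\Sigma V^\tp = A$.

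\emph{Step 2 (Uniqueness of $H$).} From $A = UH$ with $U^\tp U = I$ one obtains $A^\tp A = HU^\tp U H = H^2$. Since the symmetric PSD matrix $A^\tp A$ admits a unique PSD square root, $H = \sqrt{A^\tp A}$ is determined by $A$ alone.

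\emph{Step 3 (Variational characterization).} For an arbitrary $Q\in V(n,m)$ I would rewrite
\[
\langle Q, A\rangle = \operatorname{tr}(Q^\tp U H) = \operatorname{tr}(H\, U^\tp Q).
\]
Since both $Q$ and $U$ lie in $V(n,m)$, the $n\times n$ matrix $U^\tp Q$ has operator norm at most one: indeed, $\|U^\tp Q x\| \le \|Qx\| = \|x\|$ because $U^\tp$ is a contraction and $Q$ has orthonormal columns. Therefore every singular value of $U^\tp Q$ is bounded by $1$, and von Neumann's trace inequality gives
\[
\langle Q, A\rangle \le \sum_{i=1}^n \sigma_i(H)\,\sigma_i(U^\tp Q) \le \sum_{i=1}^n \sigma_i = \operatorname{tr}(H).
\]
Taking $Q = U$ yields $\operatorname{tr}(H U^\tp U) = \operatorname{tr}(H)$, hence $U$ attains the maximum and lies in the claimed argmax set.

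\emph{Step 4 (Full-rank case).} If $A$ has full rank, then every $\sigma_i > 0$, so $H$ is positive definite and hence invertible; consequently $U = AH^{-1}$ is uniquely determined by $A$. The only step requiring mild care is the contraction bound $\sigma_i(U^\tp Q)\le 1$ used in Step~3; the rest is routine linear algebra, and an alternative would be to first reduce to the square case $m=n$ via an orthogonal padding and then apply von Neumann's inequality directly on $\O(n)$.
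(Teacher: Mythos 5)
Your proof is correct. Note that the paper does not actually prove this lemma: it states it as a classical fact and cites Higham \cite{H-86} and Golub--Van Loan \cite{GV-13}, so there is no in-paper argument to compare against. Your route --- construct $U=GV^\tp$, $H=V\Sigma V^\tp$ from the thin SVD, get uniqueness of $H$ from $H=\sqrt{A^\tp A}$, and verify the argmax property via the contraction bound $\sigma_i(U^\tp Q)\le 1$ plus von Neumann's trace inequality --- is the standard one and all steps check out, including the full-rank case $U=AH^{-1}$. One small remark: von Neumann's inequality is more than you need for Step~3. Since $M:=V^\tp U^\tp Q V$ has operator norm at most $1$, its diagonal entries satisfy $|M_{ii}|\le 1$, and
\[
\langle Q,A\rangle=\tr(\Sigma\, V^\tp U^\tp Q V)=\sum_{i=1}^n\sigma_i M_{ii}\le\sum_{i=1}^n\sigma_i=\tr(H),
\]
which gives the same conclusion by purely elementary means; this is essentially the orthogonal Procrustes computation that the paper invokes again in Lemma~\ref{lem:inequality}.
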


The matrix decomposition $A=UH$ as in Lemma~\ref{lem:polar} is called the \textit{polar decomposition} of the matrix $A$ \cite{GV-13}. For convenience, the matrix $U$ is referred as {a} \textit{polar orthonormal factor matrix} and the matrix $H$ is the \textit{polar positive semidefinite factor matrix}.
The optimization reformulation \eqref{eq:polar-optimal} comes from the approximation problem
\[
\min_{Q\in V(n,m)}\ \|B-QC\|^2
\]
for two given matrices $B$ and $C$ of {proper sizes.} In the following, we give a global error bound for this problem. To this end, the next lemma is useful.
\begin{lemma}[Error Reformulation]\label{lem:polar-error-full}
{Let $p, m, n$ be positive integers with $m\ge n$ and let $B\in\mathbb R^{m\times p}$, $C\in\mathbb R^{n\times p}$ be two given matrices. We set $A \coloneqq BC^\tp \in\mathbb R^{m\times n}$ and suppose that $A=WH$ is a polar decomposition of $A$. We have}
\begin{equation}\label{eq:polar-error-full}
\|B-QC\|_F^2-\|B-WC\|_F^2= \|W\sqrt{H}-Q\sqrt{H}\|^2_F
\end{equation}
for any orthonormal matrix $Q\in V(n,m)$.
\end{lemma}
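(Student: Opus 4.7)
The plan is to verify the identity by expanding both sides in terms of the Frobenius inner product and showing that each reduces to a common expression, namely $2\langle W-Q,A\rangle$. All manipulations will be elementary; the only substantive tool is the cyclic property of the trace together with the symmetry of $H$ and the orthonormality constraints $W^\tp W = Q^\tp Q = I_n$.

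First I would simplify the left-hand side. Expanding $\|B-QC\|_F^2 = \|B\|_F^2 - 2\langle B, QC\rangle + \|QC\|_F^2$ and noting that $\|QC\|_F^2 = \operatorname{tr}(C^\tp Q^\tp QC) = \operatorname{tr}(C^\tp C)$ is independent of $Q$ (and the same holds for $W$ in place of $Q$), the $\|B\|_F^2$ and $\|QC\|_F^2$ terms cancel in the difference. Using $\langle B, QC\rangle = \operatorname{tr}(CB^\tp Q) = \langle Q, BC^\tp\rangle = \langle Q, A\rangle$, this leaves
\[
\|B-QC\|_F^2 - \|B-WC\|_F^2 = 2\langle W-Q, A\rangle.
\]

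Next I would simplify the right-hand side. Writing $\|W\sqrt{H} - Q\sqrt{H}\|_F^2 = \operatorname{tr}\bigl(\sqrt{H}(W-Q)^\tp (W-Q)\sqrt{H}\bigr)$ and using $W^\tp W = Q^\tp Q = I_n$, the inner matrix expands as $2I_n - W^\tp Q - Q^\tp W$. Then the cyclic property of trace gives
\[
\operatorname{tr}(\sqrt{H}\,W^\tp Q\,\sqrt{H}) = \operatorname{tr}(W^\tp Q H) = \operatorname{tr}(QHW^\tp) = \langle Q,A\rangle,
\]
where I used $A = WH$ and the symmetry of $H$ so that $HW^\tp = A^\tp$. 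The symmetric term gives the same contribution, and $\operatorname{tr}(H) = \operatorname{tr}(W^\tp A) = \langle W, A\rangle$. Combining these yields $2\langle W,A\rangle - 2\langle Q,A\rangle$, which matches the simplification of the left-hand side.

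There is no real obstacle here: the argument is a straightforward trace calculation. The only points one must be careful about are (i) invoking the symmetry of $H$ to turn $HW^\tp$ into $A^\tp$, and (ii) recognizing that the $\|QC\|_F^2$ term is independent of the orthonormal matrix $Q$ so that it cancels in the difference, which is what makes the identity hold for arbitrary $Q\in V(n,m)$ rather than only for $Q = W$.
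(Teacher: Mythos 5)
Your proof is correct and is essentially the same as the paper's: both reduce the left-hand side to $2\langle W-Q,\,A\rangle$ using the orthonormality of $Q$ and $W$, and both expand $\|W\sqrt{H}-Q\sqrt{H}\|_F^2$ via the cyclic trace property and the symmetry of $H$ to reach the same expression. The only difference is organizational (you meet in the middle, the paper runs a single chain of equalities), which is immaterial.
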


\begin{proof}
We have
\begin{align*}
\|B-QC\|_F^2-\|B-WC\|_F^2&=2\langle B,WC-QC\rangle\\
&=2\langle A,W-Q\rangle\\
&=2\langle WH,W-Q\rangle\\
&= \langle WH,W\rangle-2\langle WH,Q\rangle+\langle QH,Q\rangle\\
&= \|W\sqrt{H}-Q\sqrt{H}\|^2_F,
\end{align*}
where both the first and the fourth equalities follow from the fact that both $Q$ and $W$ are in $V(n,m)$, and the {last one is derived} from the fact that $H$ is symmetric and positive semidefinite by Lemma~\ref{lem:polar}.
\end{proof}

Given an $n\times n$ symmetric positive semidefintie matrix $H$, we can define a {symmetric bilinear form on $\mathbb R^{m\times n}$ by}
\begin{equation}\label{eq:inner}
\langle P,Q\rangle_H:=\langle PH,Q\rangle
\end{equation}
for all $m\times n$ matrices $P$ and $Q$. It can also induce {a seminorm}
\begin{equation}\label{eq:norm}
\|A\|_H:=\sqrt{\langle A,A\rangle_H}=\|A\sqrt{H}\|_F.
\end{equation}
{In particular, if $H$ is positive definite, then $\lVert \cdot \rVert_H$ is a norm on $\mathbb{R}^{m\times n}$.} Thus, the error estimation in \eqref{eq:polar-error-full} can be viewed as a distance estimation between $W$ and $Q$ {with respect to the distance induced by this norm. Moreover, if $H$ is the identity matrix, then $\lVert \cdot \rVert_H$ is simply the Frobenius norm which induces the Euclidean distance on $\mathbb{R}^{m\times n}$.} By Lemma~\ref{lem:polar-error-full} it is easy to see that the optimizer in \eqref{eq:polar-optimal} is unique whenever $A$ is of full rank.

The following result {establishes} the error estimation {with respect to the Euclidean distance.} Given a matrix $A\in\mathbb R^{m\times n}$, let $\sigma_{\min}(A)$ be the smallest singular value of $A$. If $A$ is of full rank, then $\sigma_{\min}(A)>0$.

\begin{theorem}[Global Error Bound in Frobenius Norm]\label{thm:error}
{Let $p, m, n$ be positive integers with $m\ge n$ and let $B\in\mathbb R^{m\times p}$ and $C\in\mathbb R^{n\times p}$ be two given matrices. We set $A:=BC^\tp \in\mathbb R^{m\times n}$ and suppose that $A$ is of full rank with the polar decomposition $A=WH$. We have that}
\begin{equation}\label{eq:polar-error}
\|B-QC\|_F^2-\|B-WC\|_F^2\geq \sigma_{\min}(A)\|W-Q\|^2_F
\end{equation}
for any orthonormal matrix $Q\in V(n,m)$.
\end{theorem}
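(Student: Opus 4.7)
The strategy is to combine the identity from Lemma~\ref{lem:polar-error-full} with a simple spectral argument on the polar positive semidefinite factor $H$. The full-rank hypothesis on $A$ is exactly what enables the factor $\sigma_{\min}(A)$ to come out as a positive lower bound for the eigenvalues of $H$.

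First, I would invoke Lemma~\ref{lem:polar-error-full} directly to rewrite the left-hand side of \eqref{eq:polar-error} as
\[
\|B-QC\|_F^2 - \|B-WC\|_F^2 \;=\; \|W\sqrt{H}-Q\sqrt{H}\|_F^2 \;=\; \|(W-Q)\sqrt{H}\|_F^2.
\]
Setting $D := W - Q \in \mathbb R^{m\times n}$, a direct trace computation gives $\|D\sqrt{H}\|_F^2 = \operatorname{Tr}(\sqrt{H}\, D^\tp D\, \sqrt{H}) = \operatorname{Tr}(D^\tp D\, H)$.

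Second, since $A = WH$ is a polar decomposition with $W\in V(n,m)$ and $H\in\operatorname{S}^{n}_+$, the singular values of $A$ are exactly the eigenvalues of $H$. The full-rank assumption on $A$ forces $H$ to be positive definite, so $\lambda_{\min}(H) = \sigma_{\min}(A) > 0$, which is equivalent to the matrix inequality $H - \sigma_{\min}(A)\, I \succeq 0$.

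Third, since both $D^\tp D$ and $H - \sigma_{\min}(A)\, I$ are symmetric positive semidefinite $n\times n$ matrices, the standard fact that the trace of a product of two symmetric PSD matrices is nonnegative (for instance by writing $\operatorname{Tr}(XY) = \operatorname{Tr}(X^{1/2} Y X^{1/2})$) yields
\[
\operatorname{Tr}\!\big(D^\tp D\,(H - \sigma_{\min}(A)\,I)\big) \;\geq\; 0,
\]
which rearranges to $\operatorname{Tr}(D^\tp D\, H) \geq \sigma_{\min}(A)\,\operatorname{Tr}(D^\tp D) = \sigma_{\min}(A)\,\|W-Q\|_F^2$. Chaining this with the identity from the first step delivers \eqref{eq:polar-error}.

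There is no serious obstacle here: the work is essentially just recognizing that Lemma~\ref{lem:polar-error-full} converts the problem into a weighted-Frobenius estimate, and the weight $H$ is bounded below by $\sigma_{\min}(A)\,I$ precisely because $A$ has full rank. The only place any care is needed is in identifying $\sigma_{\min}(A)$ with $\lambda_{\min}(H)$, which follows immediately from the orthonormality of $W$ and the positive semidefiniteness of $H$ in the polar decomposition.
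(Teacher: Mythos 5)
Your proof is correct and follows essentially the same route as the paper: both apply Lemma~\ref{lem:polar-error-full} to reduce the claim to a weighted Frobenius estimate and then use the spectral bound $H \succeq \sigma_{\min}(A) I$ (the paper phrases this as monotonicity of the seminorm $\lVert \cdot \rVert_H$ in the weight, while you make it explicit via $\operatorname{Tr}(D^\tp D(H-\sigma_{\min}(A)I))\ge 0$). No gaps.
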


\begin{proof}
We know that in this case
\[
\sqrt{H}-\sqrt{\sigma_{\min}(A)}I\in\operatorname{S}^{n}_+.
\]
Therefore we may conclude that
\begin{align*}
\|W\sqrt{H}-Q\sqrt{H}\|^2_F&=\|W-Q\|^2_{\sqrt{H}}\\
&\geq \|W-Q\|^2_{\sqrt{\sigma_{\min}(A)}I}\\
&=\|W\sqrt{\sigma_{\min}(A)}I-Q\sqrt{\sigma_{\min}(A)}I\|^2_F\\
&=\sigma_{\min}(A)\|W-Q\|_F^2.
\end{align*}
According to Lemma~\ref{lem:polar-error-full}, we {can} derive the desired inequality.
\end{proof}

Theorem~\ref{thm:error} is a refinement of Sun and Chen's result (cf.\ \cite[Theorem~4.1]{SC-89}), {in} which the right hand side of \eqref{eq:polar-error} has an extra factor $\frac{1}{4}$.

\subsection{Principal angles between subspaces}\label{sec:principal}
Given two linear subspaces $\mathbb U$ and $\mathbb V$ of dimension $r$ in $\mathbb R^n$, the \textit{principal angles} $\{\theta_i\colon i=1,\dots,r\}$ between {$\mathbb{U}$ and $\mathbb{V}$} and the associated \textit{principal vectors} $\{(\mathbf u_i,\mathbf v_i)\colon i=1,\dots,r\}$ are defined recursively by
\begin{equation}\label{eq:pricp-ag}
\cos(\theta_i)=\langle\mathbf u_i,\mathbf v_i\rangle= \max_{\mathbf u\in \mathbb U,\ [\mathbf u,\mathbf u_1,\dots,\mathbf u_{i-1}]\in V(i,n)}{\left\{\max_{\mathbf v\in \mathbb V,\ [\mathbf v,\mathbf v_1,\dots,\mathbf v_{i-1}]\in V(i,n)} \langle\mathbf u,\mathbf v\rangle\right\}}.
\end{equation}

The following result is {standard, whose proof} can be found in \cite[Section~6.4.3]{GV-13}.
\begin{lemma}\label{lem:theta}
{For any orthonormal matrices $U,V\in V(r,n)$ of which subspaces spanned by column vectors are $\mathbb{U}$ and $\mathbb{V}$ respectively, we have}
\begin{equation}\label{eq:singular-ang}
\sigma_i(U^\tp V)=\cos(\theta_i)\ \text{for all }i=1,\dots,r,
\end{equation}
where $\theta_i$'s are the principal angles between $\mathbb U$ and $\mathbb V$, {and $\sigma_i(U^\tp V)$ is the $i$-th largest singular value of the matrix $U^\tp V$.}
\end{lemma}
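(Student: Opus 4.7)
My plan is to reduce the statement to a direct manipulation of the singular value decomposition of the $r\times r$ matrix $M:=U^\tp V$ and then match the variational characterization of singular values with the recursive definition \eqref{eq:pricp-ag} of principal angles. Write the SVD as $M=P\Sigma Q^\tp$ where $P,Q\in\operatorname{O}(r)$ and $\Sigma=\operatorname{diag}(\sigma_1,\dots,\sigma_r)$ with $\sigma_1\ge\cdots\ge\sigma_r\ge0$. Since $P,Q$ are orthogonal, the matrices $\tilde U:=UP$ and $\tilde V:=VQ$ still lie in $V(r,n)$ and their columns $\tilde{\mathbf u}_1,\dots,\tilde{\mathbf u}_r$ and $\tilde{\mathbf v}_1,\dots,\tilde{\mathbf v}_r$ are orthonormal bases of $\mathbb U$ and $\mathbb V$ respectively. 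A direct calculation gives
\[
\tilde U^\tp \tilde V = P^\tp U^\tp V Q = P^\tp M Q = \Sigma,
\]
so $\langle \tilde{\mathbf u}_i,\tilde{\mathbf v}_j\rangle = \sigma_i\delta_{ij}$. This bi-orthogonality relation is the core combinatorial fact I will exploit.

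Next I would establish the base case $i=1$. Every unit vector $\mathbf u\in\mathbb U$ has the form $\mathbf u=U\mathbf a$ with $\|\mathbf a\|=1$ (and similarly $\mathbf v=V\mathbf b$), so
\[
\max_{\mathbf u\in\mathbb U,\mathbf v\in\mathbb V,\|\mathbf u\|=\|\mathbf v\|=1}\langle \mathbf u,\mathbf v\rangle=\max_{\|\mathbf a\|=\|\mathbf b\|=1}\mathbf a^\tp M\mathbf b=\sigma_1,
\]
by the standard variational characterization of the largest singular value. The maximum is attained at $\mathbf a=\mathbf p_1,\mathbf b=\mathbf q_1$, i.e.\ at $\mathbf u=\tilde{\mathbf u}_1,\mathbf v=\tilde{\mathbf v}_1$. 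This gives $\cos\theta_1=\sigma_1$ and identifies $(\tilde{\mathbf u}_1,\tilde{\mathbf v}_1)$ as a valid first pair of principal vectors.

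For the inductive step, I assume $(\tilde{\mathbf u}_j,\tilde{\mathbf v}_j)$ has been selected as the $j$-th principal pair for all $j<i$ and that $\cos\theta_j=\sigma_j$. The constraint in \eqref{eq:pricp-ag} forces $\mathbf u\in\mathbb U\cap\tilde{\mathbf u}_1^\perp\cap\cdots\cap\tilde{\mathbf u}_{i-1}^\perp=\operatorname{span}\{\tilde{\mathbf u}_i,\dots,\tilde{\mathbf u}_r\}$ and similarly for $\mathbf v$. Writing $\mathbf u=\sum_{j\ge i}a_j\tilde{\mathbf u}_j$ and $\mathbf v=\sum_{j\ge i}b_j\tilde{\mathbf v}_j$ with $\sum a_j^2=\sum b_j^2=1$, the bi-orthogonality identity gives
\[
\langle\mathbf u,\mathbf v\rangle=\sum_{j\ge i}\sigma_j a_jb_j\le \sigma_i\sum_{j\ge i}|a_j b_j|\le \sigma_i,
\]
by the monotonicity of the $\sigma_j$ and Cauchy--Schwarz, with equality attained at $\mathbf u=\tilde{\mathbf u}_i$, $\mathbf v=\tilde{\mathbf v}_i$. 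Hence $\cos\theta_i=\sigma_i$, which closes the induction and proves \eqref{eq:singular-ang}.

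The only subtle point, and the one I would be most careful about, is that the principal vectors are not unique when singular values coincide, so strictly speaking \eqref{eq:pricp-ag} only determines the \emph{values} $\cos\theta_i$, and we are merely \emph{exhibiting} one admissible sequence of principal vectors realising them. This does not affect the conclusion, since the singular values $\sigma_i$ depend only on $M$ and not on the SVD factors $P,Q$, so the identity $\sigma_i=\cos\theta_i$ is intrinsic. No genuine difficulty arises beyond bookkeeping.
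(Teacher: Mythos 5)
Your proof is correct. The paper does not prove this lemma at all --- it simply cites \cite[Section~6.4.3]{GV-13} --- and your argument (take the SVD $U^\tp V = P\Sigma Q^\tp$, rotate to the bi-orthogonal bases $\tilde U = UP$, $\tilde V = VQ$ with $\tilde U^\tp\tilde V = \Sigma$, then match the variational characterization of $\sigma_i$ against the recursion \eqref{eq:pricp-ag} by induction) is exactly the standard proof found in that reference, so you have supplied in full what the paper outsources. The only loose end is the one you flag yourself: the recursion \eqref{eq:pricp-ag} builds on whichever maximizers were chosen at earlier stages, and when singular values coincide these choices are not unique, so strictly one should check that \emph{every} admissible sequence of principal vectors yields the same values $\cos\theta_i$ (your remark that ``$\sigma_i$ depends only on $M$'' does not by itself close this, since a priori $\theta_i$ could depend on the earlier choices). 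This is a standard, easily repaired point --- any admissible choice of the first $i-1$ principal vectors spans the same extremal subspaces up to the relevant isotropy, or one can invoke the Courant--Fischer-type min-max characterization of $\sigma_i$ which is choice-free --- and it does not affect the correctness of your conclusion.
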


\begin{lemma}\label{lem:inequality}
{For any orthonormal matrices $U,V\in V(r,n)$, we have}
\begin{equation}\label{eq:trace}
\langle U, V \rangle \le  \sum_{j=1}^r\sigma_j(U^{\tp }V).
\end{equation}
\end{lemma}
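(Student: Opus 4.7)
The inequality $\langle U, V\rangle \le \sum_{j=1}^r \sigma_j(U^\tp V)$ can be recognized as a special case of the classical von Neumann trace inequality applied to $M = U^\tp V$ and the identity matrix. My plan is to give a direct, self-contained argument based on the singular value decomposition rather than invoking von Neumann in full generality, since the proof then amounts to a one-line computation once the SVD is set up.

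First, I would rewrite the left-hand side using the Hilbert--Schmidt inner product as
\[
\langle U,V\rangle = \operatorname{tr}(U^\tp V).
\]
Next, let $U^\tp V = W\Sigma Z^\tp$ be a singular value decomposition, where $W, Z \in \O(r)$ and $\Sigma = \operatorname{diag}(\sigma_1(U^\tp V),\dots,\sigma_r(U^\tp V))$ with nonnegative diagonal entries arranged nonincreasingly. Using the cyclic property of the trace, I would then compute
\[
\operatorname{tr}(U^\tp V) = \operatorname{tr}(W\Sigma Z^\tp) = \operatorname{tr}(\Sigma Z^\tp W) = \sum_{j=1}^r \sigma_j(U^\tp V)\,(Z^\tp W)_{jj}.
\]

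The remaining step is to bound $(Z^\tp W)_{jj} \le 1$ for each $j$. This follows immediately from the fact that $Z^\tp W$ is an $r\times r$ orthogonal matrix: its columns (and rows) are unit vectors in $\mathbb{R}^r$, so each diagonal entry is the inner product of two unit vectors and hence bounded above by $1$. Combining this bound with the nonnegativity of the singular values $\sigma_j(U^\tp V)$ yields
\[
\langle U,V\rangle = \sum_{j=1}^r \sigma_j(U^\tp V)\,(Z^\tp W)_{jj} \le \sum_{j=1}^r \sigma_j(U^\tp V),
\]
which is exactly \eqref{eq:trace}. There is no real obstacle in this argument; the only subtlety worth flagging is that the inequality $(Z^\tp W)_{jj} \le 1$ relies on $Z^\tp W$ being a square orthogonal matrix, which is why the SVD of the square matrix $U^\tp V \in \mathbb{R}^{r\times r}$ (rather than the SVD of $U$ or $V$ themselves as tall matrices) is the correct starting point.
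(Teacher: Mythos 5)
Your proof is correct. It is, however, a genuinely different route from the paper's: the paper invokes the orthogonal Procrustes formula from Golub--Van Loan, namely $\min_{Q\in \O(r)} \| A - BQ \|^2_F = \sum_{j=1}^r (\sigma_j(A)^2 - 2 \sigma_j(B^{\tp }A) + \sigma_j(B)^2)$, specializes it to $A=U$, $B=V$ (where all singular values equal $1$ and $\|U\|_F^2=\|V\|_F^2=r$), and compares the minimum over $Q$ with the particular choice $Q=I$ to get $2r - 2\sum_j\sigma_j(U^\tp V) \le \|U-V\|_F^2 = 2r - 2\langle U,V\rangle$. Your argument instead works directly with the SVD of the square matrix $U^\tp V = W\Sigma Z^\tp$ and the bound $(Z^\tp W)_{jj}\le 1$ for the orthogonal matrix $Z^\tp W$ — essentially a one-line instance of von Neumann's trace inequality. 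What each buys: your version is self-contained and more elementary, requiring nothing beyond the cyclicity of the trace and the fact that entries of an orthogonal matrix are bounded by $1$; the paper's version is shorter modulo the cited Procrustes identity and fits the surrounding appendix, which is organized around polar decompositions and best orthogonal approximations. Your flagged subtlety — that one must take the SVD of the square matrix $U^\tp V$ so that $Z^\tp W$ is square orthogonal — is exactly the right point to be careful about, and your handling of it is sound.
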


\begin{proof}
We recall from \cite[pp.~331]{GV-13} that
\[
\min_{Q\in \O(r)} \| A - BQ \|^2_F = \sum_{j=1}^r (\sigma_j(A)^2 - 2 \sigma_j(B^{\tp }A) + \sigma_j(B)^2),
\]
for any $n\times r$ matrices $A,B$. In particular, if $U,V\in \V(r,n)$ then
\[
\sigma_j(U) = \sigma_j(V) = 1\ \text{for all }j=1,\dots,r, \quad \| U \|^2_F = \| V \|^2_F = r.
\]
{
This implies that
\[
2 r - 2 \sum_{j=1}^r \sigma_j(U^\tp V) = \min_{Q\in \O(r)} \| U - VQ \|^2_F \le \| U - V \|^2_F = 2r - 2 \langle U, V \rangle,
\]
and the desired inequality follows immediately.
}
\end{proof}

\begin{lemma}\label{lem:distance}
For any orthonormal matrices $U,V\in V(r,n)$, we have
\begin{equation}\label{eq:distance}
\|U^\tp V-I\|_F^2 \leq \|U-V\|^2_F.
\end{equation}
\end{lemma}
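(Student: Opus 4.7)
The plan is to reduce the inequality to an orthogonal decomposition of $U-V$ into two Frobenius-orthogonal pieces, one of which has Frobenius norm exactly $\|U^{\tp} V - I\|_F$. Concretely, I would write
\[
U - V = U(I - U^{\tp} V) + (I - U U^{\tp})(-V),
\]
which is a trivial algebraic identity since the $U U^{\tp} V$ terms cancel.

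Next I would verify that the two summands are orthogonal in the Frobenius inner product. This follows because
\[
U^{\tp}(I - U U^{\tp}) = U^{\tp} - (U^{\tp} U) U^{\tp} = 0,
\]
using $U^{\tp} U = I_r$. Applied to the inner product $\langle U(I - U^{\tp} V), (I - U U^{\tp})(-V)\rangle$, this cancellation forces the inner product to vanish.

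Combining these two observations yields a Pythagorean identity
\[
\|U - V\|_F^2 = \|U(I - U^{\tp} V)\|_F^2 + \|(I - U U^{\tp}) V\|_F^2.
\]
Since $U \in V(r,n)$, left-multiplication by $U$ is an isometry, so $\|U(I - U^{\tp} V)\|_F = \|I - U^{\tp} V\|_F = \|U^{\tp} V - I\|_F$. Discarding the nonnegative remainder $\|(I - U U^{\tp})V\|_F^2$ gives exactly the claimed bound. There is no real obstacle here; the only subtlety is noting that the fact $U^{\tp}U = I$ (rather than $U U^{\tp} = I$, which fails unless $r = n$) is precisely what is used both to get the isometry and to get the orthogonality of the two summands, while the ``leftover'' nonnegative term $\|(I - U U^{\tp}) V\|_F^2$ quantifies how far $V$'s column space sticks out of $U$'s column space.
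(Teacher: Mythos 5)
Your proof is correct, and it takes a genuinely different route from the paper's. The paper expands $\|U^\tp V-I\|_F^2 = \|U^\tp V\|_F^2 + r - 2\tr(U^\tp V)$ and then invokes its Lemma~\ref{lem:theta} on principal angles to write $\|U^\tp V\|_F^2=\sum_{i=1}^r\cos^2\theta_i\le r$, which gives the bound after comparing with $\|U-V\|_F^2=2r-2\tr(U^\tp V)$. You instead split $U-V=U(I-U^\tp V)+(I-UU^\tp)(-V)$, check Frobenius-orthogonality of the two pieces via $U^\tp(I-UU^\tp)=0$, and use that left multiplication by $U$ is an isometry (both facts resting only on $U^\tp U=I$). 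This is more elementary --- it avoids the singular value / principal angle machinery entirely --- and it upgrades the inequality to the exact identity
\[
\|U-V\|_F^2=\|U^\tp V-I\|_F^2+\|(I-UU^\tp)V\|_F^2 .
\]
The two ``defect'' terms are in fact the same quantity in disguise, since $\|(I-UU^\tp)V\|_F^2=r-\|U^\tp V\|_F^2=\sum_{i=1}^r\sin^2\theta_i$; the paper reads it off from the spectrum of $U^\tp V$, while you read it off geometrically as the part of $V$'s column space outside the range of $U$. Either argument is complete; yours is self-contained, whereas the paper's fits into a sequence of lemmas it has already set up for other purposes.
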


\begin{proof}
We have
\[
\|U^\tp V-I\|_F^2 = r + \sum_{i=1}^r \cos^2 \theta_i  - 2 \tr(U^\tp V) \leq 2r-2\tr(U^\tp V)=\|U-V\|_F^2,
\]
where the first equality follows from Lemma~\ref{lem:theta}.
\end{proof}

Let $\mathbb U^\perp$ be the orthogonal complement subspace of a given linear subspace $\mathbb U$ in $\mathbb R^n$.
A useful fact about principal angles between two linear subspaces $\mathbb U$ and $\mathbb V$ and those between $\mathbb U^\perp$ and $\mathbb V^\perp$ is stated as follows.
The proof can be found in \cite[Theorem~2.7]{KA-07}.
\begin{lemma}\label{lem:angles-orth}
Let $\mathbb U$ and $\mathbb V$ be two linear subspaces of the same dimension and let $\frac{\pi}{2}\geq \theta_s \geq\dots\geq\theta_1 > 0$ be {the nonzero principal angles} between $\mathbb U$ and $\mathbb V$. Then {the nonzero principal angles} between $\mathbb U^\perp$ and $\mathbb V^\perp$ are $\frac{\pi}{2}\geq \theta_s \geq\dots\geq\theta_1 >  0$.
\end{lemma}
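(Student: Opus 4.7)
The plan is to reduce the problem to a statement about singular values of the diagonal blocks of an orthogonal matrix, and then invoke the CS decomposition.

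First, I would pick orthonormal bases $U, V \in V(r,n)$ for $\mathbb U, \mathbb V$ and orthonormal bases $U_\perp, V_\perp \in V(n-r,n)$ for $\mathbb U^\perp, \mathbb V^\perp$, so that $[U, U_\perp]$ and $[V, V_\perp]$ lie in $\operatorname{O}(n)$. By Lemma~\ref{lem:theta}, the cosines of the principal angles between $\mathbb U$ and $\mathbb V$ are exactly the singular values of $U^\tp V$, and analogously those between $\mathbb U^\perp$ and $\mathbb V^\perp$ are the singular values of $U_\perp^\tp V_\perp$. Hence it suffices to prove that the singular values of $U^\tp V$ and $U_\perp^\tp V_\perp$ that are strictly less than $1$ agree, with multiplicity.

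Next I would form the orthogonal matrix
\[
Q := \begin{bmatrix} U^\tp \\ U_\perp^\tp \end{bmatrix}\begin{bmatrix} V & V_\perp \end{bmatrix} = \begin{bmatrix} U^\tp V & U^\tp V_\perp \\ U_\perp^\tp V & U_\perp^\tp V_\perp \end{bmatrix} \in \operatorname{O}(n),
\]
and apply the CS decomposition (cf.\ Golub--Van Loan) to $Q$. Assuming WLOG $r \le n-r$, the CS decomposition produces orthogonal matrices $X_1,Y_1 \in \operatorname{O}(r)$ and $X_2,Y_2 \in \operatorname{O}(n-r)$ such that
\[
\operatorname{diag}(X_1,X_2)^\tp \, Q \, \operatorname{diag}(Y_1,Y_2) = \begin{bmatrix} C & -S & 0 \\ S & C & 0 \\ 0 & 0 & I_{n-2r} \end{bmatrix},
\]
where $C = \operatorname{diag}(c_1,\dots,c_r)$ and $S = \operatorname{diag}(s_1,\dots,s_r)$ with $c_i^2 + s_i^2 = 1$. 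Reading off the diagonal blocks gives that the singular values of $U^\tp V$ are exactly $c_1,\dots,c_r$, while those of $U_\perp^\tp V_\perp$ are $c_1,\dots,c_r$ together with $n-2r$ extra ones equal to $1$.

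Therefore the multisets of singular values less than $1$ of the two diagonal blocks coincide. Translating via $\cos\theta < 1 \Leftrightarrow \theta > 0$, the nonzero principal angles between $\mathbb U$ and $\mathbb V$ coincide with those between $\mathbb U^\perp$ and $\mathbb V^\perp$, which is the claim. The only real obstacle is invoking the CS decomposition as a black box; everything else is bookkeeping, and the $n-2r$ extra unit singular values on the $(n-r)\times(n-r)$ block are precisely what is needed to reconcile the two blocks having different sizes without affecting the nonzero angles.
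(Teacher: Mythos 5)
Your argument is correct. Note, however, that the paper does not actually prove this lemma at all: it simply cites \cite[Theorem~2.7]{KA-07} (Knyazev--Argentati) as a known fact. Your CS-decomposition proof is the standard self-contained derivation of that fact and everything checks out: Lemma~\ref{lem:theta} identifies cosines of principal angles with singular values of $U^\tp V$, the CS decomposition of the orthogonal matrix $Q=[U\ U_\perp]^\tp[V\ V_\perp]$ shows that the $(1,1)$ block has singular values $c_1,\dots,c_r$ while the $(2,2)$ block has singular values $c_1,\dots,c_r$ padded with $n-2r$ ones, and unit singular values correspond exactly to zero principal angles, so the nonzero angles agree with multiplicity. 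Two minor points worth making explicit if you write this up: the reduction to $r\le n-r$ is legitimate because the statement is symmetric under replacing $(\mathbb U,\mathbb V)$ by $(\mathbb U^\perp,\mathbb V^\perp)$ and $(\mathbb W^\perp)^\perp=\mathbb W$; and the conclusion is independent of the choice of orthonormal bases for the four subspaces, since a change of basis multiplies $U^\tp V$ by orthogonal matrices and leaves its singular values unchanged. What your route buys over the paper's bare citation is a short, verifiable proof resting only on the CS decomposition (itself in \cite{GV-13}, which the paper already cites); what it costs is invoking that decomposition as a black box, which is entirely reasonable here.
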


The following result is for the general case, which might be of {independent interests}.
\begin{lemma}\label{lem:error-def}
Let $m\geq n$ be positive integers and let $V:=[V_1\ V_2]\in\O(m)$ with $V_1\in V(n,m)$ and $U\in V(n,m)$ be two given orthonormal matrices. Then, there exists an orthonormal matrix $W\in V(m-n,m)$ such that $P:=[U\ W]\in \O(m)$ and
\begin{equation}\label{eq:kapp}
\|P-V\|_F^2\leq 2\|U-V_1\|_F^2.
\end{equation}
\end{lemma}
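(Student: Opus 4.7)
The approach is to reduce the claim to a one-block distance inequality and then exploit the fact that the nonzero principal angles between a subspace pair coincide with those between their orthogonal complements (Lemma~\ref{lem:angles-orth}).

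First, since $P = [U\ W]$ and $V = [V_1\ V_2]$ share the same column partition, I have the block split
\begin{equation*}
\|P - V\|_F^2 = \|U - V_1\|_F^2 + \|W - V_2\|_F^2.
\end{equation*}
Thus \eqref{eq:kapp} will follow once I construct an admissible $W$ with $\|W - V_2\|_F^2 \le \|U - V_1\|_F^2$.

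To build such a $W$, I would first fix any $W_0 \in V(m-n, m)$ with $[U\ W_0] \in \O(m)$, which exists since $U \in V(n,m)$; every admissible completion then has the form $W = W_0 Q$ for some $Q \in \O(m-n)$. Applying Lemma~\ref{lem:polar} to the square matrix $W_0^\tp V_2$ selects a polar orthonormal factor $Q^* \in \O(m-n)$ which maximizes $\langle Q, W_0^\tp V_2 \rangle$ over $\O(m-n)$, and I set $W := W_0 Q^*$. Expanding the Frobenius norm gives
\begin{equation*}
\|W - V_2\|_F^2 = 2(m-n) - 2\sum_{j=1}^{m-n} \sigma_j(W_0^\tp V_2),
\end{equation*}
while Lemma~\ref{lem:inequality} applied to $U, V_1 \in V(n,m)$ yields
\begin{equation*}
\|U - V_1\|_F^2 = 2n - 2\langle U, V_1\rangle \ge 2n - 2\sum_{j=1}^{n} \sigma_j(U^\tp V_1).
\end{equation*}

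The core of the argument is to match these two right-hand sides. By Lemma~\ref{lem:theta}, the singular values of $U^\tp V_1$ are the cosines of the principal angles between the column spaces $\mathbb U$ and $\mathbb V_1$, and those of $W_0^\tp V_2$ are the cosines of the principal angles between $\mathbb U^\perp$ and $\mathbb V_1^\perp$. Lemma~\ref{lem:angles-orth} asserts that the nonzero entries on both sides coincide with multiplicity; writing $s$ for their common number and $\theta_1,\dots,\theta_s$ for their common values, the remaining $n-s$ angles on the $(\mathbb U,\mathbb V_1)$ side and $m-n-s$ angles on the complementary side are all zero, contributing singular value $1$. A direct count then gives
\begin{equation*}
2(m-n) - 2\sum_{j=1}^{m-n} \sigma_j(W_0^\tp V_2) = 2s - 2\sum_{j=1}^{s} \cos \theta_j = 2n - 2\sum_{j=1}^{n} \sigma_j(U^\tp V_1),
\end{equation*}
so that $\|W - V_2\|_F^2 \le \|U - V_1\|_F^2$, which combined with the block split proves \eqref{eq:kapp}.

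The main bookkeeping hurdle is that the two singular-value lists have different lengths, $m-n$ versus $n$: the length imbalance must be cancelled exactly by the matching counts of cosine-$1$ entries on each side, and Lemma~\ref{lem:angles-orth} is precisely what guarantees this cancellation. Degenerate situations ($m = n$ so that $W, V_2$ are absent, or $\mathbb U = \mathbb V_1$ giving $s = 0$) fit seamlessly into the same framework and warrant at most a line of separate verification.
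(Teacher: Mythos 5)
Your proof is correct and follows essentially the same route as the paper's: complete $U$ to an orthogonal matrix, rotate the completion by the polar factor of $W_0^\tp V_2$ (the paper's $U_2 Q$), and match the two singular-value sums via Lemma~\ref{lem:theta}, Lemma~\ref{lem:inequality}, and the complementary-angle identity of Lemma~\ref{lem:angles-orth}. The only cosmetic difference is that you phrase the key step as an exact equality between the two sides $2s - 2\sum_{j=1}^s\cos\theta_j$, whereas the paper chains the same quantities through inequalities; the content is identical.
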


\begin{proof}
By {a simple computation, it is straightforward to verify that} \eqref{eq:kapp} is equivalent to
\begin{equation}\label{eq:kapp-ob}
\|W-V_2\|_F^2\leq \| U-V_1\|_F^2.
\end{equation}
{To that end, we let} $U_2\in V(m-n,m)$ be an orthonormal matrix such that $[U\  U_2]\in\O(m)$. Then, we have that the linear subspace $\mathbb U_2$ {spanned by column vectors of $U_2$ is the orthogonal complement of $\mathbb U_1$}, {which is spanned by} column vectors of $U$. Likewise, let $\mathbb V_1$ and $\mathbb V_2$ be linear subspaces spanned by column vectors of $V_1$ and $V_2$ respectively.

Let $\frac{\pi}{2}\geq\theta_s\geq\dots\geq\theta_1 > 0$ be {the nonzero principal angles} between $\mathbb U_2$ and $\mathbb V_2$ for some nonnegative integer $s\leq m-n$. We have by Lemmas~\ref{lem:theta}, and \ref{lem:inequality} that
\begin{equation}\label{eq:error-1}
\langle U_2,V_2\rangle\leq\sum_{i=1}^{m-n}\sigma_i(U_2^\tp V_2)=\sum_{i=1}^s\cos(\theta_i)+(m-n)-s.
\end{equation}
Let $Q\in \O(m-n)$ be a polar orthogonal factor matrix of the matrix $U_2^\tp V_2$. It follows from the property of polar decomposition that
\begin{equation}\label{eq:error-2}
\langle U_2Q,V_2\rangle = \sum_{i=1}^{m-n}\sigma_i(U_2^\tp V_2).
\end{equation}

On the other hand, {nonzero principal angles} between $\mathbb U_1$ and $\mathbb V_1$ are $\frac{\pi}{2}\geq\theta_s\geq\dots\geq\theta_1>0$ by Lemma~\ref{lem:angles-orth}. Therefore, by Lemmas~\ref{lem:theta}, and \ref{lem:inequality}, we have that
\begin{equation}\label{eq:error-3}
\langle U,V_1\rangle\leq \sum_{i=1}^{n}\sigma_i(U^\tp V_1)=\sum_{i=1}^s\cos(\theta_i)+n-s.
\end{equation}

In a conclusion, if we set $W \coloneqq U_2Q$, then we have the following:
\begin{align*}
\|W-V_2\|_F^2&=2(m-n)-2\langle U_2Q,V_2\rangle\\
&=2(m-n)-2\big(\sum_{i=1}^s\cos(\theta_i)+m-n-s\big)\\
&=2n - 2\big(\sum_{i=1}^s\cos(\theta_i)+n-s\big)\\
&\leq 2n -2\langle U,V_1\rangle\\
&=\|U-V_1\|^2_F,
\end{align*}
where the second equality follows from \eqref{eq:error-1} and \eqref{eq:error-2} and the inequality follows from \eqref{eq:error-3}.
\end{proof}
%
\section{Proofs of Technical Lemmas in Section~\ref{sec:APPDT}}\label{app:proofalgo}
\subsection{Proof of Lemma~\ref{lem:lambda-k}}\label{app:proofalgo-lambadk}

\begin{proof}
For each $i\in\{0,\dots,k-1\}$, we have
\begin{align}
f(\mathbb U_{i+1,[p]})-f(\mathbb U_{i,[p]})&=\sum_{j=1}^r(\lambda^{i+1}_{j,[p]})^2-\sum_{j=1}^r(\lambda^{i}_{j,[p]})^2\nonumber\\
&=\sum_{j=1}^r(\lambda^{i+1}_{j,[p]}+\lambda^{i}_{j,[p]})(\lambda^{i+1}_{j,[p]}-\lambda^{i}_{j,[p]})\nonumber\\
&=\sum_{j=1}^r\lambda^{i+1}_{j,[p]}(\lambda^{i+1}_{j,[p]}-\lambda^{i}_{j,[p]})+\sum_{j=1}^r\lambda^{i}_{j,[p]}(\lambda^{i+1}_{j,[p]}-\lambda^{i}_{j,[p]}).\label{eq:obj-k-exp}
\end{align}

{We first analyze the second summand in \eqref{eq:obj-k-exp} and by considering the following two cases:}
\begin{enumerate}
\item\label{item:proof of 4.2} If $\sigma^{(i+1)}_{r,[p]}\geq \epsilon$, then there is no proximal step in Algorithm~\ref{algo} and we have that $\sigma_{\min}(S^{(i+1)}_{[p]})=\sigma^{(i+1)}_{r,[p]}\geq \epsilon$,
where $S^{(i+1)}_{[p]}$ is the polar positive semidefinite factor matrix of $V^{(i+1)}_{[p]}\Lambda^{(i+1)}_{[p]}$ {obtained in \eqref{eq:polar}.} {From \eqref{eq:inter-vector}, \eqref{eq:lambda-orth} and \eqref{eq:marixv} we notice that
\begin{align*}
((U^{(i+1)}_{[p]})^\tp V^{(i+1)}_{[p]}\Lambda^{(i+1)}_{[p]})_{jj} &=((\mathbf u^{(i+1)}_{j,[p]})^\tp\mathbf v^{(i+1)}_{j,[p]})\lambda^{i}_{j,[p]} \\
&=((\mathbf u^{(i+1)}_{j,[p]})^\tp\mathcal A\tau_{i+1}\mathbf x^{(i+1)}_{j,[p]})\lambda^{i}_{j,[p]} \\
& =\mathcal A\tau(\mathbf x^{(i+2)}_{j,[p]})\lambda^{i}_{j,[p]} \\
& =\lambda^{i+1}_{j,[p]}\lambda^{i}_{j,[p]},
\end{align*}
and similarly $((U^{(i+1)}_{[p-1]})^\tp V^{(i+1)}_{[p]}\Lambda^{(i+1)}_{[p]})_{jj} = \lambda^{i}_{j,[p]}\lambda^{i}_{j,[p]}$.} {Hence by Lemma~\ref{lem:polar-error-full}, we obtain}
\begin{align}
\sum_{j=1}^r\lambda^{i}_{j,[p]}(\lambda^{i+1}_{j,[p]}-\lambda^{i}_{j,[p]})&=\operatorname{Tr}((U^{(i+1)}_{[p]})^\tp V^{(i+1)}_{[p]}\Lambda^{(i+1)}_{[p]})
-\operatorname{Tr}((U^{(i+1)}_{[p-1]})^\tp V^{(i+1)}_{[p]}\Lambda^{(i+1)}_{[p]})\nonumber \\
&=\frac{1}{2}\big\|(U^{(i+1)}_{[p]}-U^{(i+1)}_{[p-1]})\sqrt{S^{(i+1)}_{[p]}}\big\|_F^2\nonumber\\
&\geq \frac{\epsilon}{2}\|U^{(i+1)}_{[p]}-U^{(i+1)}_{[p-1]}\|_F^2\label{eq:bound-ineq}\\
&\geq 0.\nonumber
\end{align}

\item If $\sigma^{(i+1)}_{r,[p]}<\epsilon$, we consider the following matrix optimization problem
\begin{equation}\label{eq:proximal}
\begin{array}{rl}
\max&\langle V^{(i+1)}_{[p]}\Lambda^{(i+1)}_{[p]},U\rangle-\frac{\epsilon}{2}\|U-U^{(i+1)}_{[p-1]}\|_F^2\\
\text{s.t.}& U\in V(r,n_{i+1}).
\end{array}
\end{equation}
{Since $U,U^{(i+1)}_{[p-1]} \in {V(r,n_{i+1})}$, we must have}
\[
\frac{\epsilon}{2}\|U- U^{(i+1)}_{[p-1]}\|_F^2=\epsilon r-\epsilon\langle U^{(i+1)}_{[p-1]},U\rangle.
\]
Thus, by Lemma~\ref{lem:polar}, a global maximizer of \eqref{eq:proximal} is given by a polar orthonormal factor matrix of the matrix $V^{(i+1)}_{[p]}\Lambda^{(i+1)}_{[p]}+\epsilon U^{(i+1)}_{[p-1]}$. By Substep 2 of Algorithm~\ref{algo}, $U^{(i+1)}_{[p]}$ is a polar orthonormal factor matrix of the matrix $V^{(i+1)}_{[p]}\Lambda^{(i+1)}_{[p]}+\epsilon U^{(i+1)}_{[p-1]}$, and hence a global maximizer of \eqref{eq:proximal}. Thus,
by the optimality of $U^{(i+1)}_{[p]}$ for \eqref{eq:proximal}, we have
\[
\langle V^{(i+1)}_{[p]}\Lambda^{(i+1)}_{[p]},U^{(i+1)}_{[p]}\rangle-\frac{\epsilon}{2}\|U^{(i+1)}_{[p]}-U^{(i+1)}_{[p-1]}\|_F^2\geq \langle V^{(i+1)}_{[p]}\Lambda^{(i+1)}_{[p]},U^{(i+1)}_{[p-1]}\rangle.
\]
{
Therefore, the inequality \eqref{eq:bound-ineq} in case \eqref{item:proof of 4.2} also holds in this case.
}
\end{enumerate}

Consequently, we have
\begin{equation}\label{eq:byproduct}
0\leq \sum_{j=1}^r\lambda^{i}_{j,[p]}(\lambda^{i+1}_{j,[p]}-\lambda^{i}_{j,[p]})=\sum_{j=1}^r\lambda^{i+1}_{j,[p]}\lambda^{i}_{j,[p]}-\sum_{j=1}^r(\lambda^{i}_{j,[p]})^2,
\end{equation}
{which together with Cauchy-Schwartz inequality implies that}
\begin{equation}\label{eq:caychy}
\big(\sum_{j=1}^r(\lambda^{i}_{j,[p]})^2\big)^2\leq \big(\sum_{j=1}^r\lambda^{i}_{j,[p]}\lambda^{i+1}_{j,[p]} \big)^2\leq\sum_{j=1}^r(\lambda^{i}_{j,[p]})^2\sum_{j=1}^r(\lambda^{i+1}_{j,[p]})^2.
\end{equation}
Since $f(\mathbb U_{[0]})>0$, we conclude that
$f(\mathbb U_{0,[1]})=\sum_{j=1}^r(\lambda^{0}_{j,[1]})^2>0$ and hence $\sum_{j=1}^r\lambda^{1}_{j,[1]}\lambda^{0}_{j,[1]}>0$ by \eqref{eq:byproduct}. Thus, we conclude that
\begin{equation}\label{eqn:proof of 4.2-1}
\sum_{j=1}^r\lambda^{1}_{j,[1]}\lambda^{0}_{j,[1]} \leq \sum_{j=1}^r(\lambda^{1}_{j,[1]})^2\frac{\sum_{j=1}^r(\lambda^{0}_{j,[1]})^2}{\sum_{j=1}^r\lambda^{1}_{j,[1]}\lambda^{0}_{j,[1]}}\leq \sum_{j=1}^r(\lambda^{1}_{j,[1]})^2,
\end{equation}
where the first inequality follows from \eqref{eq:caychy} and the second from \eqref{eq:byproduct}. {Combining \eqref{eqn:proof of 4.2-1} with \eqref{eq:obj-k-exp} and \eqref{eq:bound-ineq}, we may obtain \eqref{eq:obj-k} for $i=0$ {and $p=1$}.}

{On the one hand, from \eqref{eq:caychy} we obtain
\[
0 \le \sum_{j=1}^r (\lambda_{j,[p]}^i)^2 (\sum_{j=1}^r (\lambda_{j,[p]}^{i+1})^2 - \sum_{j=1}^r (\lambda_{j,[p]}^i)^2).
\]
Since $\sum_{j=1}^r (\lambda_{j,[p]}^i)^2 > 0$ if there is no truncation, } {we must have
\[
0 \le \sum_{j=1}^r (\lambda_{j,[p]}^{i+1})^2 - \sum_{j=1}^r (\lambda_{j,[p]}^i)^2 = f(\mathbb U_{i+1,[p]})-f(\mathbb U_{i,[p]}),
\]
i.e., the objective function $f$ is monotonically increasing} during the APD iteration as long as there is no truncation. {On the other hand, there are at most $r$ truncations and hence the total loss of $f$ by the truncation} is at most $r\kappa^2<f(\mathbb U_{[0]})$.
{Therefore, $f$ is always positive and  according to \eqref{eq:byproduct}, we may conclude that $\sum_{j=1}^r\lambda^{i+1}_{j,[p]}\lambda^{i}_{j,[p]}>0$ along iterations.} By induction on $p$, we obtain
\[
\sum_{j=1}^r\lambda^{i+1}_{j,[p]}\lambda^{i}_{j,[p]} \leq \sum_{j=1}^r(\lambda^{i+1}_{j,[p]})^2\frac{\sum_{j=1}^r(\lambda^{i}_{j,[p]})^2}{\sum_{j=1}^r\lambda^{i+1}_{j,[p]}\lambda^{i}_{j,[p]}}\leq \sum_{j=1}^r(\lambda^{i+1}_{j,[p]})^2,
\]
which together with \eqref{eq:obj-k-exp} and \eqref{eq:bound-ineq}, implies \eqref{eq:obj-k} for arbitrary nonnegative integer $p$.
\end{proof}

\subsection{Proof for Lemma~\ref{lem:subdiff}}\label{app:proofalgo-sub}

\begin{proof}
The subdifferentials of $h$ can be partitioned {as follows:}
\begin{equation}\label{eq:sub-block}
\partial h(\mathbb U)=(\nabla_1 f(\mathbb U)+\partial\delta_{V(r,n_1)}(U^{(1)}))\times\dots\times (\nabla_k f(\mathbb U)+\partial\delta_{V(r,n_k)}(U^{(k)})).
\end{equation}
Following the notation of Algorithm~\ref{algo}, {we set}
\[
\mathbf x_{j}:=(\mathbf u^{(1)}_{j,[p+1]},\dots,\mathbf u^{(k)}_{j,[p+1]})\ \text{for all }j=1,\dots,r,
\]
where $\mathbf u^{(i)}_{j,[p+1]}$ is the {$j$-th} column of the matrix $U^{(i)}_{[p+1]}$ for all $i\in\{1,\dots,k\}$,
\begin{equation}\label{eq:matrix-vi}
V^{(i)}:=\begin{bmatrix}\mathbf v^{(i)}_1&\dots&\mathbf v^{(i)}_r\end{bmatrix}\ \text{with }\mathbf v^{(i)}_j:=\mathcal A\tau_i(\mathbf x_{j})\ \text{for all }j=1,\dots,r,
\end{equation}
for all $i\in\{1,\dots,k\}$ and
\begin{equation}\label{eq:matrix-lambda}
\Lambda:=\operatorname{diag}(\lambda_1,\dots,\lambda_r)\ \text{with }\lambda_j:=\mathcal A\tau(\mathbf x_{j}).
\end{equation}

{{By \eqref{eq:polar} and \eqref{eq:proximal}}, we have}
\begin{equation}\label{eqn:proof of lemma 4.5-1}
V^{(i)}_{[p+1]}\Lambda^{(i)}_{[p+1]}+\alpha U^{(i)}_{[p]}=U^{(i)}_{[p+1]}S^{(i)}_{[p+1]},
\end{equation}
where $\alpha\in\{0,\epsilon\}$ depending on {whether or not} there is a proximal correction. {According to \eqref{eq:normal-sub} and \eqref{eqn:proof of lemma 4.5-1}, we have
\[
-U^{(i)}_{[p+1]} \in \partial\delta_{V(r,n_i)}(U^{(i)}_{[p+1]}), \quad
V^{(i)}_{[p+1]}\Lambda^{(i)}_{[p+1]}+\alpha\big(U^{(i)}_{[p]}\big) \in \partial\delta_{V(r,n_i)}(U^{(i)}_{[p+1]}),
\]
which implies that $V^{(i)}_{[p+1]}\Lambda^{(i)}_{[p+1]}+\alpha\big(U^{(i)}_{[p]}-U^{(i)}_{[p+1]}\big)\in \partial\delta_{V(r,n_i)}(U^{(i)}_{[p+1]})$. If we take
\begin{equation}\label{eq:w}
W^{(i)}_{[p+1]}:=2V^{(i)}\Lambda -2V^{(i)}_{[p+1]}\Lambda^{(i)}_{[p+1]}-2\alpha\big(U^{(i)}_{[p]}-U^{(i)}_{[p+1]}\big),
\end{equation}
then we have}
\[
W^{(i)}_{[p+1]}\in 2V^{(i)}\Lambda +\partial\delta_{V(r,n_i)}(U^{(i)}_{[p+1]}).
\]

On the other hand,
\begin{align*}
&\quad\ \frac{1}{2}\|W^{(i)}_{[p+1]}\|_F\\
&=\|V^{(i)}\Lambda -V^{(i)}_{[p+1]}\Lambda^{(i)}_{[p+1]}-\alpha\big(U^{(i)}_{[p]}-U^{(i)}_{[p+1]}\big)\|_F\\
&\leq\|V^{(i)}\Lambda -V^{(i)}_{[p+1]}\Lambda \|_F+\|V^{(i)}_{[p+1]}\Lambda -V^{(i)}_{[p+1]}\Lambda^{(i)}_{[p+1]}\|_F+\alpha\|U^{(i)}_{[p]}-U^{(i)}_{[p+1]}\|_F\\
&\leq\|V^{(i)}-V^{(i)}_{[p+1]}\|_F\|\Lambda \|_F+\|V^{(i)}_{[p+1]}\|_F\|\Lambda -\Lambda^{(i)}_{[p+1]}\|_F+\alpha\|U^{(i)}_{[p]}-U^{(i)}_{[p+1]}\|_F\\
&\leq{\|\mathcal A\|}\|\Lambda \|_F\big(\sum_{j=1}^r\|\tau_i(\mathbf x_j)-\tau_i(\mathbf x^i_{j,[p+1]})\|\big)\\
&\quad +\|V^{(i)}_{[p+1]}\|_F\|\mathcal A\|\big(\sum_{j=1}^r\|\tau(\mathbf x_j)-\tau(\mathbf x^i_{j,[p+1]})\|\big)+\alpha\|U^{(i)}_{[p]}-U^{(i)}_{[p+1]}\|_F\\
&\leq \sqrt{r}\|\mathcal A\|^2\big(\sum_{j=1}^r\sum_{s=i+1}^{k}\|\mathbf u^{(s)}_{j,[p+1]}-\mathbf u^{(s)}_{j,[p]}\| \big)\\
&\quad+\sqrt{r}\|\mathcal A\|^2\big(\sum_{j=1}^r\sum_{s=i}^{k}\|\mathbf u^{(s)}_{j,[p+1]}-\mathbf u^{(s)}_{j,[p]}\|\big)+\alpha\|U^{(i)}_{[p]}-U^{(i)}_{[p+1]}\|_F\\
&\leq (2r\sqrt{r}\|\mathcal A\|^2 +\epsilon)\|\mathbb U_{[p+1]}-\mathbb U_{[p]}\|_F,
\end{align*}
where the third inequality follows from the fact that
\[
V^{(i)}-V^{(i)}_{[p+1]}=\begin{bmatrix}\mathcal A(\tau_i(\mathbf x_1)-\tau_i(\mathbf x^i_{1,[p+1]}))&\dots&\mathcal A(\tau_i(\mathbf x_r)-\tau_i(\mathbf x^i_{r,[p+1]}))\end{bmatrix},
\]
and a similar formula for $\Lambda -\Lambda^{(i)}_{[p+1]}$, the fourth follows from the fact that
\[
|\mathcal A\tau(\mathbf x)|\leq \|\mathcal A\|
\]
for any vector $\mathbf x:=(\mathbf x_1,\dots,\mathbf x_k)$ with $\|\mathbf x_i\|=1$ for all $i=1,\dots,k$ and the last one follows from $\alpha \leq\epsilon$. This, together with \eqref{eq:sub-block}, implies \eqref{eq:subdiff}.
\end{proof}
\subsection{Proof for Lemma~\ref{lem:gradient-diff}}\label{app:proofalgo-gradient}

\begin{proof}
{We let}
\[
W^{(i)}_{[p+1]}:=V^{(i)}\Lambda -V^{(i)}_{[p+1]}\Lambda^{(i)}_{[p+1]}-\alpha\big(U^{(i)}_{[p]}-U^{(i)}_{[p+1]}\big),
\]
where $\alpha\in\{0,\epsilon\}$ depending on whether there is a proximal correction or not (cf.\ the proof for Lemma~\ref{lem:subdiff}).
It follows from Lemma~\ref{lem:subdiff} that
\[
{\|W^{(i)}_{[p+1]}\|_F}\leq \gamma_0 \|U^{(i)}_{[p]}-U^{(i)}_{[p+1]}\|_F
\]
for some constant $\gamma_0>0$. By Algorithm~\ref{algo}, we have
\begin{equation}\label{eq:linear-polar}
V^{(i)}_{[p+1]}\Lambda^{(i)}_{[p+1]}+\alpha U^{(i)}_{[p]}=U^{(i)}_{[p+1]}S^{(i)}_{[p+1]}
\end{equation}
{where $S^{(i)}_{[p+1]}$ is a symmetric positive semidefinite matrix.} Since $U^{(i)}_{[p+1]}\in V(r,n_i)$ is an orthonormal matrix, we have
\begin{equation}\label{eq:linear-symmetry}
S^{(i)}_{[p+1]}=(U^{(i)}_{[p+1]})^\tp \big(V^{(i)}_{[p+1]}\Lambda^{(i)}_{[p+1]}+\alpha U^{(i)}_{[p]}\big)=\big(V^{(i)}_{[p+1]}\Lambda^{(i)}_{[p+1]}+\alpha U^{(i)}_{[p]}\big)^\tp U^{(i)}_{[p+1]},
\end{equation}
where the second equality follows from the symmetry of the matrix $S^{(i)}_{[p+1]}$.

Consequently, we have
\begin{align}
&\quad\ \frac{1}{2}\|\nabla_i f(\mathbb U_{[p+1]})-U^{(i)}_{[p+1]}(\nabla_i f(\mathbb U_{[p+1]}))^\tp U^{(i)}_{[p+1]}\|_F\nonumber\\
&=\|V^{(i)}\Lambda -U^{(i)}_{[p+1]}(V^{(i)}\Lambda)^\tp U^{(i)}_{[p+1]}\|_F\nonumber\\
&=\|W^{(i)}_{[p+1]}+V^{(i)}_{[p+1]}\Lambda^{(i)}_{[p+1]}+\alpha\big(U^{(i)}_{[p]}-U^{(i)}_{[p+1]}\big)-U^{(i)}_{[p+1]}(V^{(i)}\Lambda)^\tp U^{(i)}_{[p+1]}\|_F\nonumber\\
&\leq \|W^{(i)}_{[p+1]}\|_F+\alpha{\|U^{(i)}_{[p]}-U^{(i)}_{[p+1]}\|_F}+\|V^{(i)}_{[p+1]}\Lambda^{(i)}_{[p+1]}-U^{(i)}_{[p+1]}(V^{(i)}\Lambda)^\tp U^{(i)}_{[p+1]}\|_F\nonumber\\
&\leq \gamma_1\|U^{(i)}_{[p]}-U^{(i)}_{[p+1]}\|_F+\|V^{(i)}_{[p+1]}\Lambda^{(i)}_{[p+1]}-U^{(i)}_{[p+1]}(V^{(i)}\Lambda)^\tp U^{(i)}_{[p+1]}\|_F,\label{eq:relative-1}
\end{align}
{where $\gamma_1 = \gamma_0+\epsilon$}. {Next, we derive an estimation for the second summand of the right hand side of \eqref{eq:relative-1}. To do this, we notice that}
\begin{align}
&\quad\ \|V^{(i)}_{[p+1]}\Lambda^{(i)}_{[p+1]}-U^{(i)}_{[p+1]}(V^{(i)}\Lambda)^\tp U^{(i)}_{[p+1]}\|_F\nonumber\\
&=\|U^{(i)}_{[p+1]}S^{(i)}_{[p+1]}-\alpha U^{(i)}_{[p]}-U^{(i)}_{[p+1]}(V^{(i)}\Lambda)^\tp U^{(i)}_{[p+1]}\|_F\nonumber\\
&=\|U^{(i)}_{[p+1]}\big(V^{(i)}_{[p+1]}\Lambda^{(i)}_{[p+1]}+\alpha U^{(i)}_{[p]}\big)^\tp U^{(i)}_{[p+1]}-\alpha U^{(i)}_{[p]}-U^{(i)}_{[p+1]}(V^{(i)}\Lambda)^\tp U^{(i)}_{[p+1]}\|_F\nonumber\\
&\leq\|U^{(i)}_{[p+1]}\big((V^{(i)}_{[p+1]}\Lambda^{(i)}_{[p+1]})^\tp -(V^{(i)}\Lambda)^\tp \big)U^{(i)}_{[p+1]}\|_F+\alpha\| U^{(i)}_{[p+1]}(U^{(i)}_{[p]})^\tp U^{(i)}_{[p+1]}-U^{(i)}_{[p]}\|_F\nonumber\\
&\leq   \|V^{(i)}_{[p+1]}\Lambda^{(i)}_{[p+1]}-V^{(i)}\Lambda\|_F+\alpha\| U^{(i)}_{[p+1]}(U^{(i)}_{[p]})^\tp U^{(i)}_{[p+1]}-U^{(i)}_{[p]}\|_F\nonumber\\
&\leq \gamma_2\|U^{(i)}_{[p+1]}-U^{(i)}_{[p]}\|_F,\label{eq:relative-2}
\end{align}
where $\gamma_2>0$ is some constant, the first equality follows from \eqref{eq:linear-polar}, the second from \eqref{eq:linear-symmetry}, the second inequality\footnote{Since $U$ is orthonormal, we must have
\[
\|UAU\|_F^2=\|AU\|_F^2=\langle AUU^\tp,A\rangle\leq \|A\|_F^2.
\]
} follows from the fact that $U^{(i)}_{[p+1]}\in V(r,n)$ and {the last inequality from the relation
\[
\|(U^{(i)}_{[p]})^\tp U^{(i)}_{[p+1]}-I\|_F\leq \|U^{(i)}_{[p]}-U^{(i)}_{[p+1]}\|_F,
\]
which is obtained by Lemma~\ref{lem:distance}.
The desired inequality can be derived easily from \eqref{eq:relative-1} and \eqref{eq:relative-2}.}

\end{proof}

\begin{thebibliography}{1}
\bibitem{AMS-08}
P.-A. Absil, R. Mahony, R. Sepulchre, Optimization Algorithms on Matrix Manifolds. Princeton University Press, Princeton, USA, 2008.

\bibitem{AGHKT-14}
A. Anandkumar, R. Ge, D. Hsu, S.M. Kakade, M. Telgarsky, Tensor decompositions for
learning latent variable models. J. Mach. Learn. Res., (2014), 15: 2773--2832.

\bibitem{ABRS-10}
H. Attouch, J. Bolte, P. Redont, A. Soubeyran, Proximal alternating minimization and projection methods for nonconvex problems: an approach based on the Kurdyka-\L ojasiewicz inequality. Math. Oper. Res., (2010), 35: 438--457.

\bibitem{ABS-13}
H. Attouch, J. Bolte, B. F. Svaiter. Convergence of descent methods for semi-algebraic and tame problems: proximal algorithms, forward-backward splitting, and regularized Gauss-Seidel methods. Math. Program., (2013), 137(1-2): 91--129.

\bibitem{Beck-book}
A. Beck, First-Order Methods in Optimization. MOS-SIAM Series on Optimization, SIAM, 2017.


\bibitem{B-99}
D.P. Bertsekas, Nonlinear Programming. 2nd ed., Athena Scientific, Belmont, USA, 1999.

\bibitem{BCR-98}
J. Bochnak, M., Coste, M.-F., Roy, Real Algebraic Geometry. Ergebnisse der Mathematik und ihrer Grenzgebiete, vol. 36. Springer, Berlin, 1998.

\bibitem{BDLM-10}
J. Bolte, A. Daniilidis, O. Ley, L. Mazet, Characterizations of \L ojasiewicz inequalities and applications: subgradient flows,
talweg, convexity. Trans. Amer. Math. Soc., (2010), 362: 3319--3363.

\bibitem{BST-14}
J. Bolte, S. Sabach, M. Teboulle. Proximal alternating linearized minimization for nonconvex and nonsmooth problems. Math. Program., (2014), 146(1-2): 459--494.







\bibitem{CS-09}
J. Chen, Y. Saad, On the tensor SVD and the optimal low rank orthogonal approximation of tensors. SIAM J. Matrix Anal. Appl., (2009), 30: 1709--1734.


\bibitem{C-94}
P. Comon, Independent component analysis, a new concept? Signal Process., (1994), 36:
287--314.

\bibitem{C-14}
P. Comon, Tensors: a brief introduction. {IEEE Signal Proc. Mag.}, (2014), 31: 44--53.

\bibitem{CJ-10}
P. Comon, C. Jutten, Handbook of Blind Source Separation. Academic Press, Oxford,
2010.


\bibitem{DK-05}
D. D' Acunto, K. Kurdyka, Explicit bounds for the \L ojasiewicz exponent in the gradient inequality for polynomials.
 Ann. Polon. Math., (2005), 87: 51--61.


\bibitem{DDV-00}
L. De Lathauwer, B. De Moor, J. Vandewalle, On the best rank-1 and rank-$(r_1, r_2,¡­,r_N)$ approximation of higher-order tensors. SIAM J. Matrix Anal. Appl., (2000), 21: 1324--1342.

\bibitem{DDV-01}
L. De Lathauwer, B. De Moor, J. Vandewalle, Independent component analysis and (simultaneous) third-order tensor diagonalization. {IEEE T. Signal Proces.}, (2001), 49: 2262--2271.





\bibitem{DL-08}
V. De Silva, L.H. Lim, Tensor rank and the ill-posedness of the best low-rank approximation problem. SIAM J. Matrix Anal. Appl., (2008), 29: 1084--1127.

\bibitem{dC-92}
M.P. do Carmo, Riemannian Geometry. Springer, 1992.

\bibitem{EY-36}
C. Eckart, G. Young, The approximation of one matrix by another of lower rank. Psychometrika, (1936), 1: 211--218.

\bibitem{EAT-98}
A. Edelman, T.A. Arias, S.T. Smith, The geometry of algorithms with orthogonality constraints. SIAM. J. Matrix Anal. Appl., (1998), 20(2): 303--353.



\bibitem{F-92}
A. Franc, Etude Alg\'ebrique des Multitableaux: Apports de l'Alg\'ebre Tensorielle. Th\`ese de Doctorat, Sp\'ecialit\'e Statistiques, Univ. de Montpellier II, Montpellier, France, 1992.


\bibitem{GV-13}
G.H. Golub, C.F. Van Loan, Matrix Computations. 4th ed., Johns Hopkins University Press, Baltimore, MD, 2013.

\bibitem{GKT-13}
L. Grasedyck, D. Kressner, C. Tobler, A literature survey of low-rank tensor approximation techniques. GAMM-Mitt., (2013), 36: 53--78.

\bibitem{GC-19}
Y. Guan, D. Chu, Numerical computation for orthogonal low-rank approximation of tensors. SIAM J. Matrix Anal. Appl., (2019), 40: 1047--1065.



\bibitem{H-12}
W. Hackbusch, Tensor Spaces and Numerical Tensor Calculus. Springer, 2012.

\bibitem{H-77}
R. Hartshorne, Algebraic Geometry. Graduate Texts in Mathematics 52,
Springer, New York, 1977.

\bibitem{H-86}
N.J. Higham, Computing the polar decomposition¡ªwith applications. SIAM J. Sci. Statist.
Comput., (1986), 7: 1160--1174.

\bibitem{HJ-85}
R.A. Horn, C.R. Johnson, Matrix Analysis. Cambridge University
Press, New York, 1985.

\bibitem{HL-18}
S. Hu, G. Li, Convergence rate analysis for the higher order power method in best rank one approximations of tensors. Numer. Math., (2018), 140: 993--1031.

\bibitem{IAV-13}
M. Ishteva, P.-A. Absil, P. Van Dooren, Jacobi algorithm for the best low multilinear
rank approximation of symmetric tensors. SIAM J. Matrix Anal. Appl., (2013), 34: 651--672.



\bibitem{KA-07}
A.V. Knyazev, M.E. Argentati, Majorization for changes in angles between subspaces, Ritz values, and graph Laplacian spectra. SIAM J. Matrix Anal. Appl., (2006), 29: 15--32.



\bibitem{K-01}
T.G. Kolda, Orthogonal tensor decompositions. SIAM J. Matrix Anal. Appl., (2001), 23: 243--255.

\bibitem{K-02}
T.G. Kolda, A counterexample to the possibility of an extension of the Eckart-Young
low rank approximation theorem for the orthogonal rank tensor decomposition. SIAM J.
Matrix Anal. Appl., (2002), 24: 762--767.

\bibitem{KB-09}
T.G. Kolda, B.W. Bader, Tensor decompositions and applications. SIAM Rev., (2009), 51: 455--500.

\bibitem{K-77}
J.B. Kruskal, Three-way arrays: rank and uniqueness of trilinear decompositions, with
application to arithmetic complexity and statistics. Linear Algebra Appl., (1977), 18: 95--138.




\bibitem{L-12}
J.M.~Landsberg, Tensors: Geometry and Applications. AMS, Providence, RI, 2012.

\bibitem{LSJR-16}
J.D. Lee, M. Simchowitz, M.I. Jordan, B. Recht, Gradient descent only converges to minimizers. P. Mach. Learn. Res., (2016), 49: 1--12.

\bibitem{LP-16}
G. Li, T.K. Pong, Calculus of the exponent of Kurdyka-\L ojasiewicz inequality and its applications to linear convergence of first-order methods. Found. Comput. Math., {(2018), 18: 1199-1232.}

\bibitem{LUC-18}
J. Li, K. Usevich, P. Comon, Globally convergent jacobi-type algorithms for simultaneous
orthogonal symmetric tensor diagonalization. SIAM J. Matrix Anal. Appl., (2018), 39: 1--22.

\bibitem{LUC-19}
J. Li, K. Usevich, P. Comon, On approximate diagonalization of third order symmetric tensors by orthogonal transformations. Linear Algebra Appl., (2019), 576: 324--351.

\bibitem{LUC-20}
J. Li, K. Usevich, P. Comon, Jacobi-type algorithm for low rank orthogonal approximation of symmetric tensors and its convergence analysis. arXiv 1911.00659.

\bibitem{L-05}
L.-H. Lim, Singular values and eigenvalues of tensors: a variational approach. In Proceedings of the 1st IEEE International Workshop on Computational Advances in Multi-Sensor Adaptive Processing, 2005, pp. 129--132.

\bibitem{L-13}
L.-H. Lim, Tensors and hypermatrices. Chapter 15 in Handbook of Linear Algebra, L. Hogben eds, 2013.

\bibitem{LWS-16}
H. Liu, W. Wu, A.M.-C. So, Quadratic optimization with orthogonality constraints: explicit \L ojasiewicz exponent and linear convergence of line-search methods. Proceedings of the 33rd International Convergnece on Machine Learning, New York, USA, 2016.

\bibitem{L-63}
S. \L ojasiewicz, Une propri\' et\' e topologique des sous-ensembles analytiques r\' eels, Les \' Equations aux D\' eriv\' ees Partielles. \' Editions du centre National de la Recherche Scientifique, Paris, 1963, pp. 87--89.

\bibitem{MV-08}
C.D.M. Martin, C.F. Van Loan, A Jacobi-type method for computing orthogonal tensor decompositions. SIAM J. Matrix Anal. Appl., (2008), 30: 1219--1232.

\bibitem{M-87}
P. McCullagh, Tensor Methods in Statistics. Chapman and Hall, London, (1987)

\bibitem{M-63}
J. Milnor, Morse Theory. Annals Math. Studies, 51, Princeton Univ. Press, 1963

\bibitem{M-13}
M.J. Mohlenkamp, Musings on multilinear fitting. Linear Algebra Appl., (2013), 438: 834--852.

\bibitem{MHG-15}
C. Mu, D. Hsu, D. Goldfarb, Successive rank-one approximations for nearly
orthogonally decomposable symmetric tensors. SIAM J. Matrix Anal. Appl., (2015), 36: 1638--1659.

\bibitem{N-04}
Y. Nesterov, Introductory Lectures on Convex Optimization-A Basic Course. Kluwer Academic Publishers, 2004.

\bibitem{QL-17}
L. Qi, Z. Luo, Tensor Analysis: Spectral Theory and Special Tensors. SIAM, 2017.



\bibitem{RW-98}
R.T. Rockafellar, R. Wets, Variational Analysis. Grundlehren der Mathematischen Wissenschaften, Vol. 317. Springer, Berlin, 1998.

\bibitem{S-77}
I.R. Shafarevich, Basic Algebraic Geometry. Springer-Verlag, Berlin, 1977.

\bibitem{SDCJD-12}
M. S$\phi$rensen, L. De Lathauwer, P. Comon, S. Jcart, L. Deneire, Canonical polyadic
decomposition with a columnwise orthonormal factor matrix. SIAM J. Matrix Anal. Appl.,
(2012), 33: 1190--1213.

\bibitem{SC-89}
J. Sun, C. Chen, Generalized polar decomposition. Math. Numer. Sinica, (1989), 11:
262--273.


\bibitem{U-12}
A. Uschmajew, Local convergence of the alternating least squares algorithm for canonical tensor approximation. SIAM J. Matrix Anal. Appl., (2012), 33: 639--652.

\bibitem{U-15}
A. Uschmajew,  A new convergence proof for the high-order power method and generalizations. Pacific J. Optim., (2015) 11: 309--321.

\bibitem{ULC-19}
K. Usevich, J. Li, P. Comon, Approximate matrix and tensor diagonalization by unitary
transformations: convergence of jacobi-type algorithms. arXiv:1905.12295, (2019).

\bibitem{WC-14}
L. Wang, M.T. Chu, On the global convergence of the alternating least squares method for rank-one tensor approximation. SIAM J. Matrix Anal. Appl., (2014), 35(3): 1058--1072.

\bibitem{WTY-15}
L. Wang, M.T. Chu, B. Yu, Orthogonal low rank tensor approximation: alternating least squares method and its global convergence. SIAM J. Matrix Anal. Appl., (2015), 36: 1--19.

\bibitem{Y-19}
Y. Yang, The epsilon-alternating least squares for orthogonal low-rank tensor approximation and its global convergence. arXiv 1911.10921.


\bibitem{ZG-01}
T. Zhang, G.H. Golub, Rank-one approximation to high order tensors. SIAM J. Matrix Anal. Appl., (2001), 23: 534--550.
\end{thebibliography}
\end{document}